\begin{document}

\newcounter{lemma}
\newcommand{\lemma}{\par \refstepcounter{lemma}%
{\bf Лемма \arabic{lemma}.}}

\newcounter{corollary}
\newcommand{\corollary}{\par \refstepcounter{corollary}%
{\bf Следствие \arabic{corollary}.}}

\newcounter{remark}
\newcommand{\remark}{\par \refstepcounter{remark}%
{\bf Замечание \arabic{remark}.}}

\newcounter{theorem}
\newcommand{\theorem}{\par \refstepcounter{theorem}%
{\bf Теорема \arabic{theorem}.}}

\newcounter{proposition}
\newcommand{\proposition}{\par \refstepcounter{proposition}%
{\bf Предложение \arabic{proposition}.}}

\renewcommand{\refname}{\centerline{\bf Список литературы}}

\newcommand{\proof}{{\it Доказательство.\,\,}}

\noindent УДК 517.5

{\bf Д.П.~Ильютко} (МГУ имени М.\,В.\,Ломоносова),

{\bf Е.А.~Севостьянов} (Житомирский государственный университет им.\
И.~Франко)

\medskip
{\bf D.P.~Ilyutko} (M.\,V.\,Lomonosov Moscow State University),

{\bf E.A.~Sevost'yanov} (Zhytomyr Ivan Franko State University)

\medskip
{\bf О граничном поведении отображений на римановых многообразиях в
терминах простых концов}

\medskip
{\bf On boundary behavior of mappings on Riemannian manifolds in
terms of prime ends}

\medskip\medskip
Изучается граничное поведение классов кольцевых отображений на
римановых многообразиях, являющихся обобщением квазиконформных
отображений по Герингу. В терминах простых концов регулярных
областей получены теоремы о непрерывном продолжении указанных
классов на границу области. В этих же терминах доказаны результаты о
равностепенной непрерывности классов этих отображений в замыкании
заданной области.

\medskip
A boundary behavior of ring mappings on Riemannian manifolds, which
are generalization of quasiconformal mappings by Gehring, is
investigated. In terms of prime ends, we obtain theorems about
continuous extension to a boundary of classes mentioned above. In
the terms mentioned above, we prove results about equicontinuity of
these classes in the closure of the domain.

\newpage
{\bf 1. Введение.} Настоящая работа посвящена изучению отображений с
ограниченным и конечным искажением, активно изучаемых в последнее
время в ряде работ отечественных и зарубежных авторов, см., напр.,
\cite{ABBS}--\cite{Vu}. Отдельного внимания заслуживают работы, в
которых изложены результаты, относящиеся к изучению классов
Орлича-Соболева в окрестности границы заданной области в терминах
простых концов (см. \cite{KR} и \cite{GRY}). Здесь же упомянем
публикации, в которых исследовано граничное поведение этих классов в
случае локально связных границ (см., напр., \cite{MRSY},
\cite{KRSS}--\cite{Sev$_3$}).

\medskip
Остановимся более подробно на работах \cite{KR} и \cite{GRY}, в
которых речь идёт о граничном поведении гомеоморфизмов,
удовлетворяющих определённым геометрическим условиям. Главная
особенность этих двух работ состоит в том, что граничное
соответствие между областями, установленное в них, понимается в
терминах так называемых простых концов. Указанная особенность
исключительно важна с точки зрения степени общности рассматриваемых
объектов, так как даже для конформных отображений единичного круга
на некоторую область плоскости никакого поточечного соответствия
между границами областей может не быть. Подчеркнём также, что
результаты этих работ получены только для случая евклидового
$n$-мерного пространства и случай римановых многообразий здесь не
учтён. С другой стороны, случай последних рассмотрен в статьях
\cite{ARS}--\cite{IS}, однако, здесь речь идёт только о поточечном
соответствии и <<хороших>> границах. Естественно может быть
поставлен вопрос о справедливости соответствующих результатов на
римановых многообразиях в ситуации, когда граничное соответствие
устанавливается только по простым концам. Как мы уже заметили, эта
ситуация эквивалентна рассмотрению границ с весьма большой
<<степенью испорченности>>, однако, и для случая многих <<хороших>>
границ соответствие по простым концам вполне может иметь место.
Последнее относится, например, к случаю так называемых локально
квазиконформных границ, о которых неоднократно упоминается далее.

\medskip В настоящей статье речь идёт исключительно о граничном
соответствии между областями, понимаемом в смысле простых концов.
Рукопись является в некотором смысле итоговой работой, которая
подводит черту под исследованиями публикаций \cite{KR}, \cite{GRY},
\cite{ARS}--\cite{IS}. Основная её цель -- изложить наиболее важные
результаты, касающиеся граничного поведения отображений между
римановыми многообразиями в случае плохих границ. Рассмотрение
произвольных отображений не входит в наши ближайшие планы, так как
это потребовало бы наличия неких универсальных методов исследования,
существование которых не установлено. В силу этого, мы
ограничиваемся ситуацией квазиконформных отображений и их обобщений,
которые могут быть исследованы методом модулей. Пионерской работой в
этом направлении является известная статья Някки \cite{Na}, в
которой впервые был решён вопрос о граничном продолжении
квазиконформных отображений пространства ${\Bbb R}^n$ в терминах
простых концов. Насколько нам известно, квазиконформный случай так и
не был перенесен на римановы многообразия, что, впрочем, сделано
нами в настоящем тексте.

\medskip
Приведём теперь необходимые для изложения сведения. Всюду ниже
${\Bbb M}^n$ и ${\Bbb M}^n_*$~--- римановы многообразия размерности
$n\geqslant 2$ с геодезическими расстояниями $d$ и $d_*,$
соответственно, $D, D^{\,\prime}$ -- области, принадлежащие ${\Bbb
M}^n$ и ${\Bbb M}_*^n,$ соответственно, а $Q:{\Bbb M}^n\rightarrow
[0, \infty]$ -- измеримая относительно меры объёма функция, равная
нулю вне заданной области $D$ (при этом, при всех $x\in D$ мы
предполагаем, что $0<Q(x)<\infty$).  Мы считаем далее известными
понятия римановой метрики, геодезического расстояния, объёма и длины
на многообразии (см. \cite{IS}). По умолчанию замыкание
$\overline{A}$ и граница $\partial A$ множества $A\subset {\Bbb
M}^n$ понимаются в смысле геодезического расстояния $d(x, y)$ на
${\Bbb M}^n.$ Мы также считаем известными понятия кривых и модуля
семейств кривых (поверхностей), которые для многообразий также могут
быть найдены в работе \cite{IS} (см. также классическую работу
Фугледе по этому поводу \cite{Fu}).

\medskip
Всюду далее, если не оговорено противное,
$|x|=\sqrt{x_1^2+\ldots+x_n^2},$ где $x=(x_1, \ldots, x_n).$ Для
удобства положим
$${\Bbb B}^n_+:=\{x=(x_1, \ldots,
x_n)\in {\Bbb R}^n: |x|<1, x_n>0\}\,,$$
$${\Bbb
B}^{n-1}:=\{x=(x_1, \ldots, x_n)\in {\Bbb R}^n: |x|<1, x_n=0\}\,,$$
$$S_+(x_0, r)=\{x\in {\Bbb R}^n: x=(x_1,\ldots, x_n), |x-x_0|=r, x_n>0\}\,,$$
$$B_+(x_0, r)=\{x\in {\Bbb R}^n: x=(x_1,\ldots, x_n), |x-x_0|<r, x_n>0\}\,.$$
Всюду далее
$$B(x_0, r)=\{x\in {\Bbb M}^n: d(x, x_0)<r\}\,,$$
$$S(x_0, r)=\{x\in {\Bbb M}^n: d(x, x_0)=r\}\,,$$
$$A=A(x_0,
r_1, r_2)=\{x\in {\Bbb M}^n\,|\,r_1<d(x, x_0)<r_2\}\,.$$
Для евклидовых шаров и сфер в пространстве ${\Bbb R}^n$ мы также
используем обозначения $B(x_0, r),$ $S(x_0, r),$ что отдельно не
оговаривается за исключениям случаев, допускающих двоякое
толкование. Отображение $f:D\rightarrow D^{\,\prime}$ между
областями $D\subset {\Bbb M}^n$ и $D^{\,\prime}\subset {\Bbb M}_*^n$
будем называть {\it квазиконформным}, если для каждого семейства
кривых $\Gamma$ в области $D$ и некоторой постоянной $K\geqslant 1$
мы имеем
$$(1/K)\cdot M(\Gamma)\leqslant M(f(\Gamma))\leqslant
K\cdot M(f(\Gamma))\,,$$
где, как обычно, $M(\Gamma)$ обозначает модуль семейства кривых
$\Gamma$ (см., напр., \cite[раздел~1.8]{IS}). Следующие определения
могут быть найдены в работе \cite{KR}. Пусть $\omega$ -- открытое
множество в ${\Bbb R}^k$, $k=1,\ldots,n-1$. Непрерывное отображение
$\sigma:\omega\rightarrow{\Bbb M}^n$ называется {\it $k$-мерной
поверхностью} в ${\Bbb M}^n$. {\it Поверхностью} будет называться
произвольная $(n-1)$-мерная поверхность $\sigma$ в ${\Bbb M}^n.$
Поверхность $\sigma:\omega\rightarrow D$ называется {\it жордановой
поверхностью} в $D$, если $\sigma(z_1)\ne\sigma(z_2)$ при $z_1\ne
z_2$. Далее мы иногда будем использовать $\sigma$ для обозначения
всего образа $\sigma(\omega)\subset {\Bbb M}^n$ при отображении
$\sigma$, $\overline{\sigma}$ вместо $\overline{\sigma(\omega)}$ в
${\Bbb M}^n$ и $\partial\sigma$ вместо
$\overline{\sigma(\omega)}\setminus\sigma(\omega)$. Жорданова
поверхность $\sigma$ в $D$ называется {\it разрезом} области $D$,
если $\sigma$ разделяет $D$, т.\,е. $D\setminus \sigma$ имеет больше
одной компоненты, $\partial\sigma\cap D=\varnothing$ и
$\partial\sigma\cap\partial D\ne\varnothing$.

Последовательность $\sigma_1,\sigma_2,\ldots,\sigma_m,\ldots$
разрезов области $D$ называется {\it цепью}, если:

\medskip

%(i) $\overline{\sigma_i}\cap\overline{\sigma_j}=\varnothing$ для
%всех $i\ne j$, $i,j= 1,2,\ldots$\,;
%
%\medskip

(i) множество $\sigma_{m+1}$ содержится в точности в одной
компоненте $d_m$ множества $D\setminus \sigma_m$, при этом,
$\sigma_{m-1}\subset D\setminus (\sigma_m\cup d_m)$;

\medskip

(ii) $\cap\,d_m=\varnothing$, где $d_m$ -- компонента $D\setminus
\sigma_m$, содержащая $\sigma_{m+1}$.

\medskip
Согласно определению, цепь разрезов $\{\sigma_m\}$ определяет цепь
областей $d_m\subset D$, таких, что $\partial\,d_m\cap
D\subset\sigma_m$ и $d_1\supset d_2\supset\ldots\supset
d_m\supset\ldots$. Две цепи разрезов $\{\sigma_m\}$ и
$\{\sigma_k^{\,\prime}\}$ называются {\it эквивалентными}, если для
каждого $m=1,2,\ldots$ область $d_m$ содержит все области
$d_k^{\,\prime}$ за исключением конечного числа, и для каждого
$k=1,2,\ldots$ область $d_k^{\,\prime}$ также содержит все области
$d_m$ за исключением конечного числа. {\it Конец} области $D$ -- это
класс эквивалентных цепей разрезов $D$.

Пусть $K$ -- конец области $D$ в ${\Bbb M}^n$, $\{\sigma_m\}$ и
$\{\sigma_m^{\,\prime}\}$ -- две цепи в $K$, $d_m$ и
$d_m^{\,\prime}$ -- области, соответствующие $\sigma_m$ и
$\sigma_m^{\,\prime}$. Тогда
$$\bigcap\limits_{m=1}\limits^{\infty}\overline{d_m}\subset
\bigcap\limits_{m=1}\limits^{\infty}\overline{d_m^{\,\prime}}\subset
\bigcap\limits_{m=1}\limits^{\infty}\overline{d_m}\ ,$$ и, таким
образом,
$$\bigcap\limits_{m=1}\limits^{\infty}\overline{d_m}=
\bigcap\limits_{m=1}\limits^{\infty}\overline{d_m^{\,\prime}}\ ,$$
т.\,е. множество
%
%\begin{equation}\label{eq14}
$$I(K)=\bigcap\limits_{m=1}\limits^{\infty}\overline{d_m}$$
%\end{equation}
%
зависит только от $K$ и не зависит от выбора цепи разрезов
$\{\sigma_m\}$. Множество $I(K)$ называется {\it телом конца} $K$.

Хорошо известно, что $I(K)$ является континуумом, т.\,е. связным
компактным множеством, см., напр., \cite[I(9.12)]{Wh}. Кроме того,
ввиду условий (i) и (ii),  имеем, что
$$I(K)=\bigcap\limits_{m=1}\limits^{\infty}(\partial d_m\cap\partial D)=
\partial D\ \cap\ \bigcap\limits_{m=1}\limits^{\infty}\partial d_m\ .$$
Таким образом, получаем следующее утверждение.

\begin{proposition}\label{thabc1} {\sl Для каждого конца $K$ области $D$
в ${\Bbb M}^n$
$$I(K)\subset\partial D.$$}
\end{proposition}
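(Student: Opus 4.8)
The plan is to show that every $x\in I(K)$ satisfies $x\in\overline D$ and $x\notin D$. The first inclusion is immediate: $I(K)\subset\overline{d_1}$ and $d_1\subset D$, so $I(K)\subset\overline D$; hence the whole task reduces to excluding the possibility $x\in D$.

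First I would record three elementary facts about a chain $\{\sigma_m\}$ and the associated domains $d_m$ (recall $d_m$ is the component of $D\setminus\sigma_m$ containing $\sigma_{m+1}$), all obtained directly from (i)--(ii) and the definition of a cross-section. \emph{(a)} Since $\partial\sigma_m\cap D=\varnothing$, each $\sigma_m$ is relatively closed in $D$, so $D\setminus\sigma_m$ is open and, as ${\Bbb M}^n$ is locally connected, its components are open; it follows that $\partial d_m\cap D\subset\sigma_m$, and since every other component of $D\setminus\sigma_m$ is an open neighbourhood disjoint from $d_m$, also $\overline{d_m}\cap D\subset d_m\cup\sigma_m$. \emph{(b)} The domains are nested, $d_{m+1}\subset d_m$: writing (i) with $m+1$ in place of $m$ gives $\sigma_m\subset D\setminus(\sigma_{m+1}\cup d_{m+1})$, so the connected set $d_{m+1}$ avoids $\sigma_m$ and thus lies in a single component of $D\setminus\sigma_m$; that component is $d_m$, because a nonempty proper open subset of the connected set $D$ cannot be relatively closed in $D$, whence $\partial d_{m+1}\cap D\ne\varnothing$, and by \emph{(a)} this forces $\overline{d_{m+1}}$ to meet $\sigma_{m+1}\subset d_m$. \emph{(c)} Consecutive cross-sections are disjoint, $\sigma_m\cap\sigma_{m+1}=\varnothing$, since $\sigma_{m+1}\subset d_m$ and $d_m\cap\sigma_m=\varnothing$.

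Now suppose, for contradiction, that some $x\in I(K)$ lies in $D$. By \emph{(a)}, $x\in\overline{d_m}\cap D\subset d_m\cup\sigma_m$ for every $m$. Let $A=\{m:x\in d_m\}$. If $A$ is infinite, then for any $M$ we may pick $m\in A$ with $m\geqslant M$, and nesting \emph{(b)} gives $x\in d_m\subset d_M$; hence $x\in d_M$ for all $M$, i.e. $x\in\bigcap_{m}d_m$, contradicting (ii). Therefore $A$ is finite, so $x\in\sigma_m$ for all large $m$; in particular $x\in\sigma_N\cap\sigma_{N+1}$ for some $N$, contradicting \emph{(c)}. In both cases we reach a contradiction, so $x\notin D$, and together with $x\in\overline D$ this yields $x\in\partial D$.

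I do not anticipate a real obstacle — the statement is point-set topology — but the step needing the most attention is \emph{(b)}, the nesting $d_{m+1}\subset d_m$: it is the only place where the second clause of (i) is genuinely used and where one must invoke connectedness of $D$ to know that a proper component of $D\setminus\sigma_{m+1}$ has non-empty boundary inside $D$. Equivalently, one could organise the bookkeeping as the chain of identities $I(K)=\bigcap_m(\partial d_m\cap\partial D)=\partial D\cap\bigcap_m\partial d_m$ indicated in the text just before the statement, from which $I(K)\subset\partial D$ is read off at once; the argument above is a self-contained version of the same computation.
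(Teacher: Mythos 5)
Your proof is correct and follows essentially the same route as the paper: the paper simply asserts, ``in view of conditions (i) and (ii)'', the identity $I(K)=\partial D\cap\bigcap_{m}\partial d_m$ from which the inclusion is immediate, and your argument is a careful point-set verification of exactly the facts underlying that identity ($\overline{d_m}\cap D\subset d_m\cup\sigma_m$, the nesting $d_{m+1}\subset d_m$, the disjointness of consecutive cross-sections, and $\bigcap_m d_m=\varnothing$). The details you supply at step \emph{(b)} are sound and are in fact only sketched in the paper.
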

Всюду далее, как обычно, $\Gamma(E, F, D)$ обозначает семейство всех
кривых $\gamma:[a, b]\rightarrow D$ таких, что $\gamma(a)\in E$ и
$\gamma(b)\in F.$ Следуя \cite{Na}, будем говорить, что конец $K$
является {\it простым концом}, если $K$ содержит цепь разрезов
$\{\sigma_m\}$, такую, что
\begin{equation}\label{eq5}
M(\Gamma(\sigma_m, \sigma_{m+1}, D))<\infty \quad\forall \quad m\in
{\Bbb N}
\end{equation}
и
\begin{equation}\label{eqSIMPLE}
\lim\limits_{m\rightarrow\infty}M(\Gamma(C, \sigma_m, D))=0
\end{equation}
для произвольного континуума $C$ в $D.$ В дальнейшем используются
следующие обозначения: множество простых концов, соответствующих
области $D,$ обозначается символом $E_D,$ а пополнение области $D$
её простыми концами обозначается $\overline{D}_P.$

\medskip
Будем говорить, что граница области $D$ в ${\Bbb M}^n$ является {\it
локально квазиконформной}, если каждая точка $x_0\in\partial D$
имеет окрестность $U$, которая может быть отображена квазиконформным
отображением $\varphi$ на единичный шар ${\Bbb B}^n\subset{\Bbb
R}^n$ так, что $\varphi(\partial D\cap U)$ является пересечением
${\Bbb B}^n$ с координатной гиперплоскостью. Рассмотрим также
следующее определение (см. \cite{KR}). Будем называть цепь разрезов
$\{\sigma_m\}$ {\it регулярной}, если $d(\sigma_{m})\rightarrow 0$
при $m\rightarrow\infty.$ Если конец $K$ содержит по крайней мере
одну регулярную цепь, то $K$ будем называть {\it регулярным}.
Говорим, что ограниченная область $D$ в ${\Bbb M}^n$ {\it
регулярна}, если $D$ может быть квазиконформно отображена на область
с локально квазиконформной границей и, кроме того, каждый простой
конец $P\subset E_D$ является регулярным. Заметим, что в
пространстве ${\Bbb R}^n$ все простые концы регулярной области
регулярны, и наоборот (см. \cite[теорема~5.1]{Na}).

\medskip
Говорят, что семейство кривых $\Gamma_1$ {\it минорируется}
семейством $\Gamma_2,$ пишем $\Gamma_1\,>\,\Gamma_2,$
если для каждой кривой $\gamma\,\in\,\Gamma_1$ существует подкривая,
которая принадлежит семейству $\Gamma_2.$
В этом случае,
\begin{equation}\label{eq32*A}
\Gamma_1
> \Gamma_2 \quad \Rightarrow \quad M_p(\Gamma_1)\leqslant M_p(\Gamma_2)
\end{equation} (см. \cite[теорема~1(c)]{Fu}).

\medskip
{\bf 2. Аналог лемм Някки для многообразий}. Для дальнейшего
изложения нам необходимы вспомогательные утверждения о соответствии
простых концов между областями, одна из которых является
квазиконформным образом области с локально квазиконформной границей.
Для пространства ${\Bbb R}^n$ такие утверждения известны и доказаны
Някки в его работе \cite[теорема~4.1]{Na}. Поскольку справедливость
этих результатов на многообразиях нам неизвестна, следует установить
эти результаты путём прямого доказательства. Как и прежде, {\it
геодезическим расстоянием} между множествами $F, F^{\,*}\subset
D\subset {\Bbb M}^n$ будем называть величину
$$d(F, F^{\,*}):=\inf\limits_{x\in F, y\in F^{\,*}}d(x, y)\,,$$
где $d(x, y)$ -- геодезическое расстояние. Также геодезическим
диаметром $F$ будем называть величину
$$d(F):=\sup\limits_{x, y\in F}d(x, y)\,.$$
Как обычно, $M_p$ обозначает $p$-модуль семейств кривых на
многообразии (см., напр., \cite[раздел~1.3]{IS$_1$}). Для
дальнейшего изложения будет полезным следующее определение (см.
\cite[разд.~13.3]{MRSY}). Будем говорить, что граница $\partial D$
области $D$ {\it сильно достижима в точке $x_0\in
\partial D$ относительно $p$-модуля}, если для любой окрестности $U$ точки $x_0$ найдется
компакт $E\subset D,$ окрестность $V\subset U$ точки $x_0$ и число
$\delta
>0$ такие, что
\begin{equation}\label{eq1}
M_p(\Gamma(E,F, D))\geqslant \delta
\end{equation}
для любого континуума $F$ в $D,$ пересекающего $\partial U$ и
$\partial V.$ Если $p=n,$ приставка <<$p$-модуля>> по отношению к
(\ref{eq1}), как правило, опускается.

Смысл условия (\ref{eq1}) состоит в том, что при приближении
континуума $F$ фиксированного диаметра к точке границы области,
модуль семейств кривых, соединяющих этот континуум с некоторым
фиксированным компактом, не стремится к нулю. Указанное свойство
имеет место, в частности, для <<хороших>> областей в ${\Bbb R}^n:$
единичного шара, всего пространства ${\Bbb R}^n,$ а также любой
ограниченной выпуклой области. Несложно также привести примеры
областей, в которых данное условие нарушается. Приведём один из
таких примеров.

{\bf Пример 1.} Для простоты рассмотрим случай плоскости ${\Bbb
R}^2,$ и пусть $p=n.$ Рассмотрим квадрат
$$\prod=(0,1)\times (0,1)=\{z=(x, y)\in {\Bbb R}^2: x\in (0, 1), y\in
(0,1)\}\,,$$
на основе котрого построим некоторую область $D$ следующим образом.
Пусть $I_k$ -- отрезок
$$I_k=\{z=(x, y): x=1/k, 0\leqslant y\leqslant 1/2\}\,,$$ где для каждого
$I_k$ число $k=2,3,\ldots$ -- фиксировано. Полагаем
$$D:=\prod\setminus \bigcup\limits_{k=2}^{\infty}I_k\,,$$
(см. рисунок 1).
\begin{figure}[h]
\centerline{\includegraphics[scale=0.9]{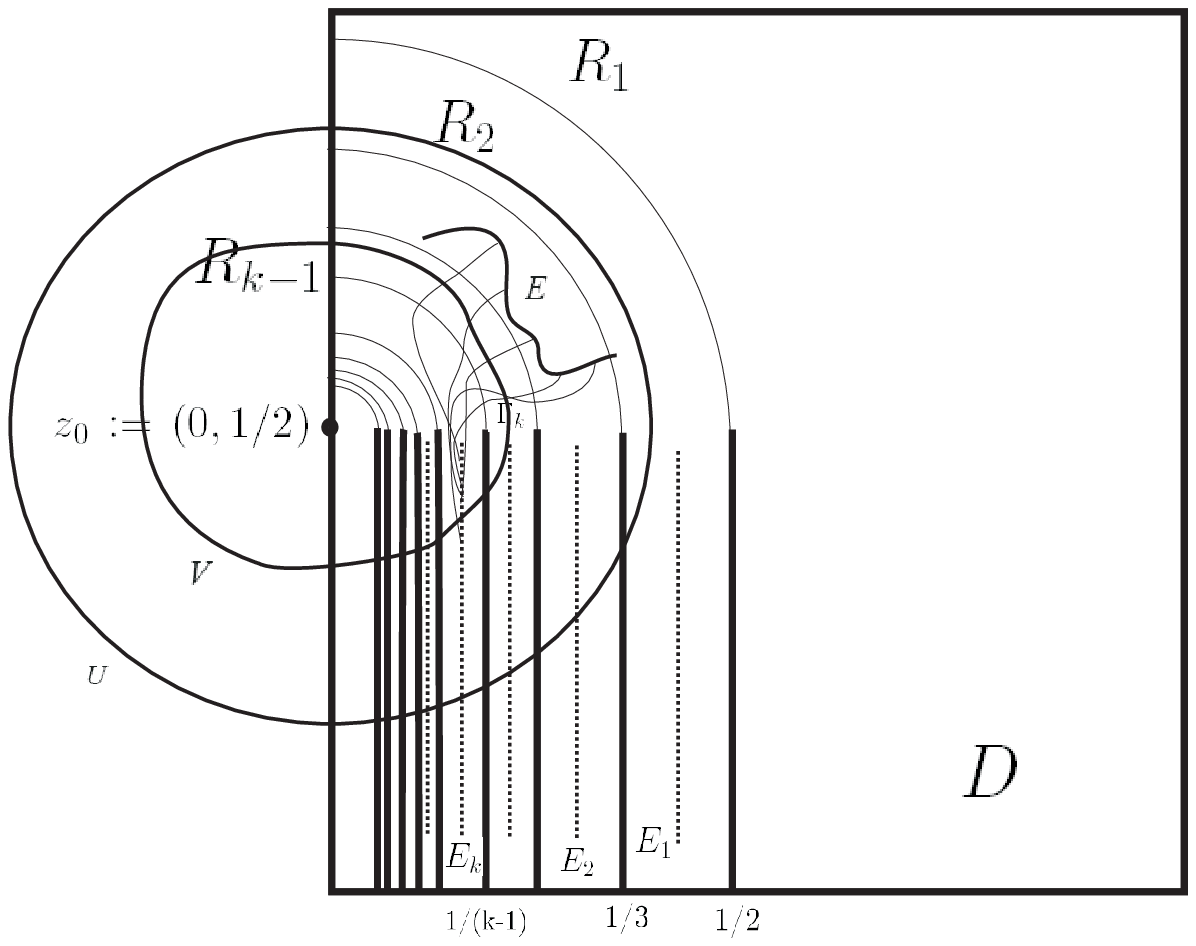}} \centerline{Рис. 1.
Пример области, не имеющей сильно достижимой границы}
\end{figure}
Покажем, что область $D$ не имеет сильно достижимой границы,
например, в точке $z_0=(0, 1/2).$ Согласно определению, нам следует
показать, что найдётся хотя бы одна окрестность $U$ точки $z_0$
такая, что для любой окрестности $V\subset U,$ произвольного
компакта $E\subset D$ и числа $\delta
>0$ найдётся континуум $F,$ пересекающий $\partial U$ и $\partial
V,$ для которого выполнено неравенство
\begin{equation}\label{eq1G}
M(\Gamma(E,F, D))<\delta\,.
\end{equation}
Выберем в качестве $U:=B(z_0, 1/3),$ где, как прежде, $z_0=(0,
1/2).$ Зафиксируем окрестность $V\subset U,$ компакт $E\subset D$ и
число $\delta>0.$ Рассмотрим последовательность $R_k=1/k,$
$k=1,2,\ldots .$ Поскольку $E$ -- компакт в $D,$ найдётся $k_0>0$
такое, что $E\cap B(z_0, R_{k_0-1})=\varnothing.$ Мы можем также
считать, что все точки $z=(x, y)\in E$ удовлетворяют условию:
$x>1/(k_0-1).$ Рассмотрим также последовательность континуумов
$E_k,$ определённых следующим образом:
$$E_k=\left\{z=(x, y): x=\frac{1}{2}\left(\frac{1}{k}+\frac{1}{(k-1)}\right), \frac18\leqslant y\leqslant v_0\right\}\,,$$
где $v_0$ -- произвольное положительное число, удовлетворяющее
условиям: $v_0<1/2,$ $1/2-v_0<{\rm dist}\,(z_0,
\partial V).$
Ясно, что $E_k$ -- континуумы в $D.$ Покажем, что при некотором
$k_1\in {\Bbb N},$ $k_1>k_0,$ и всех натуральных $k\in {\Bbb N}$
континуумы $E_k$ удовлетворяют условиям: $E_k\cap \partial
U\ne\varnothing$ и $E_k\cap \partial V\ne\varnothing.$ В самом деле,
пусть
$$x_k=\left(\frac{1}{2}\left(\frac{1}{k}+\frac{1}{(k-1)}\right),
\frac18\right),
y_k=\left(\frac{1}{2}\left(\frac{1}{k}+\frac{1}{(k-1)}\right),
v_0\right)$$ -- <<самая нижняя>> и <<самая верхняя>> точки
континуума $E_k,$ соответственно. Тогда
$$|z_0-x_k|\rightarrow 3/8, \quad |z_0-y_k|\rightarrow 1/2-v_0, \quad k\rightarrow\infty\,.$$
Следовательно, $x_k\not\in B(z_0, 1/3)=U$ и $y_k\in V$ при
достаточно больших $k>k_0,$ тем самым, при тех же $k$ имеем:
$E_k\cap (D\setminus U)\ne \varnothing$ и $E_k\cap V\ne
\varnothing.$ В таком случае, ввиду \cite[теорема~1.I.5,
$\S\,46$]{Ku} выполнены соотношения $E_k\cap \partial
U\ne\varnothing$ и $E_k\cap \partial V\ne\varnothing.$ Пусть $k_1$
-- наименьшее из чисел $k\geqslant k_0,$ при которых указанные
соотношения имеют место.

Пусть $L_k$ -- сектор круга $B(z_0, R_{k-1}),$ определённый
следующим образом: $$L_k=\{z=(x, y)=z_0+re^{i\varphi}: 0<r<R_{k-1},
\varphi\in [0, \pi/2)\}\,.$$ Пусть также $$P_k=\{z=(x, y): z\in D:
0<x<1/(k-1), 0<y\leqslant 1/2\}\,.$$ Положим $D_k:=L_k\cup P_k.$
Ясно, что $D_k$ является областью, при этом, $E_k\in D_k.$ Кроме
того, построению, $E\in D\setminus D_k$ при всех $k>k_0.$

Пусть $\Gamma_k$ -- семейство кривых, соединяющих $E_k$ и $E$ в
области $D.$ В дальнейшем, как обычно, $|\gamma|$ -- носитель кривой
$\gamma.$ Если $\gamma\in \Gamma_k,$ то $|\gamma|\cap
D_k\ne\varnothing\ne |\gamma|\cap (D\setminus D_k),$ поэтому ввиду
\cite[теорема~1.I.5, $\S\,46$]{Ku} это возможно лишь в случае
$|\gamma|\cap \partial D_k\ne \varnothing.$ Так как кривая $\gamma$
принадлежит области $D,$ то из последнего условия вытекает наличие
подкривой $\gamma_1<\gamma,$ соединяющей $S(z_0, R_{k-1})$ c $E,$
$k>k_0.$ Путём аналогичных рассуждений заключаем, что у кривой
$\gamma$ имеется подкривая, соединяющая множества $S(z_0, R_{k-1})$
и $S(z_0, R_{k_0-1})$ в ${\Bbb R}^2.$ Таким образом,
\begin{equation}\label{eq19}
\Gamma(E, E_k, D)>\Gamma(S(z_0, R_{k-1}), S(z_0, R_{k_0-1}), {\Bbb
R}^2)\,,\qquad k>k_0.
\end{equation}
Хорошо известно (см. \cite[теорема~7.5]{Va}), что
\begin{equation}\label{eq20}
M(\Gamma(S(x_0, R_{k-1}), S(x_0, R_{k_0-1}), {\Bbb
R}^2))=\frac{2\pi}{\log\frac{R_{k_0-1}}{R_{k-1}}}\rightarrow
0\,,\quad k\rightarrow \infty\,.
\end{equation}
Таким образом, из (\ref{eq19}) и (\ref{eq20}) ввиду минорирования
модуля вытекает, что
$$M(\Gamma(E, E_k, D))\rightarrow 0\,, \quad k\rightarrow\infty.$$
Из последнего соотношения вытекает существование натурального числа
$k_2>k_1,$ такого что $M(\Gamma(E, E_k, D))<\delta.$ Теперь положим
$E=E_{k_2}.$ В силу установленных выше свойств множеств $E_k,$
$k>k_1,$ континуум $E$ удовлетворяет условиям $E\cap
\partial U\ne\varnothing$ и $E\cap
\partial V\ne\varnothing,$ кроме того, $E$ удовлетворяет условию
(\ref{eq1G}). Таким образом, то, что граница области $D$ не является
сильно достижимой в точке $z_0,$ установлено.

\medskip
Будем говорить, что граница $\partial D$ области $D$
является {\it слабой плоской в точке $x_0\in
\partial D,$} если для любой окрестности $U$ точки $x_0$ и для каждого $P>0$ найдется
окрестность $V\subset U$ точки $x_0$ такая, что для любых двух
континуумов $F$ и $G,$ пересекающих $\partial U$ и $\partial V,$
выполняется неравенство
\begin{equation}\label{eq3}
M(\Gamma(E,F, D))\geqslant P\,.
\end{equation}
Граница $\partial D$ области $D$ будет называться {\it слабой
плоской}, если она является слабо плоской в каждой точке $x_0\in D.$
Непосредственно из определения вытекает, что области со слабо
плоскими границами являются также областями с сильно достижимыми
границами, поэтому область из примера 1, в частности, не является
слабо плоской.

Смысл свойства слабой плоскости границы заключается в том, что
модуль семейства кривых, соединяющих континуумы с не стремящимися к
нулю диаметрами вблизи граничной точки, близок к бесконечности.
Опять-таки, данный факт хорошо известен для <<хороших>> областей
(единичного круга, ограниченных выпуклых областей, всего
пространства ${\Bbb R}^n$ и т.п.). <<Слабая плоскость>> имеет и
довольно ясную физическую интерпретацию: если представить себе $E$ и
$F$ как пару заряженных пластин, сближающихся друг с другом, то
ёмкость соответствующего <<хорошего>> конденсатора обязана быть
сколь угодно большой. (Не лишним будет напомнить, что ёмкость
конденсатора численно равна модулю соответствующего семейства
кривых. Разница между модулем и ёмкостью является сугубо
терминологической). Наличие <<диэлектрика>> между пластинами --
разреза в заданной области -- {\it существенно меняет ситуацию,} что
можно явно продемонстрировать на следующем примере.

\medskip
{\bf Пример 2.} Рассмотрим на плоскости единичный круг ${\Bbb B}^n$
с разрезом
$$I=[0, 1]=\{z=(x, y)\in {\Bbb R}^2: 0\leqslant x\leqslant 1,
y=0\}\,.$$
Иными словами, положим $D:={\Bbb B}^n\setminus I.$ Покажем, что
заданная область не является слабо плоской в точке $z_0=(1, 0).$ По
определению слабо плоской границы, мы должны показать, что хотя бы
для одной окрестности $U$ этой точки и некоторого числа $P>0$
выполнено следующее условие: для любой окрестности $V\subset U$
найдутся континуумы $E$ и $F$ в $D,$ для которых
\begin{equation}\label{eq21}
E\cap \partial U\ne \varnothing\ne E\cap
\partial V,\quad F\cap
\partial U\ne \varnothing\ne F\cap \partial V
\end{equation}
и, одновременно,
\begin{equation}\label{eq22}
M(\Gamma(E, F, D))<P\,.
\end{equation}
Выберем в качестве $U=B(z_0, 1/2).$ Пусть $P:=\pi$ и пусть $V$ --
произвольная окрестность точки $z_0,$ содержащаяся в $U$ (см.
рисунок 2).
\begin{figure}[h]
\centerline{\includegraphics[scale=0.45]{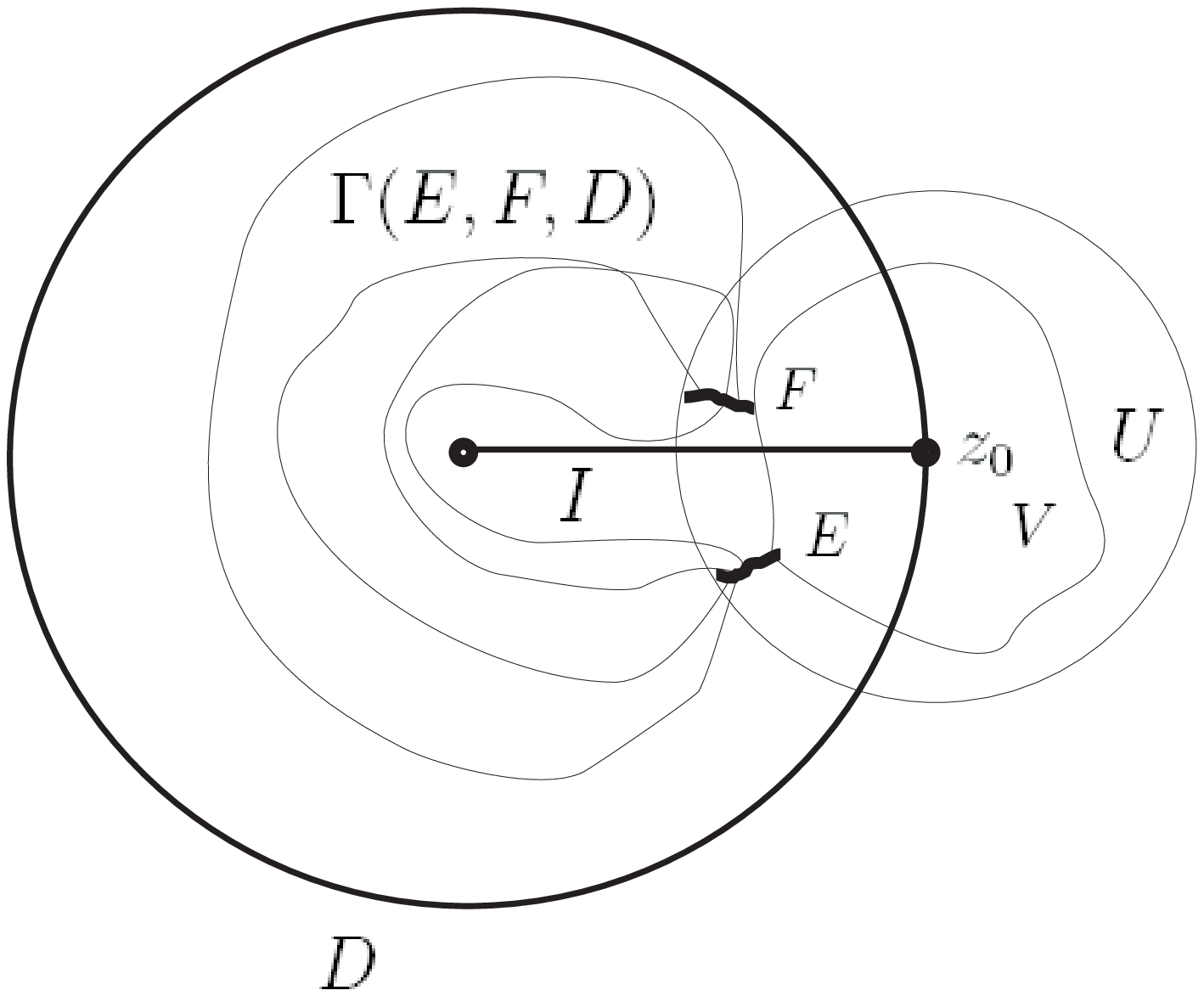}} \centerline{Рис.
2. Пример области, не имеющей слабо плоской границы}
\end{figure}
В качестве континуумов $F$ и $G$ выберем произвольным образом пару
кривых, соединяющих по разные стороны от разреза $I$ сферу $S(z_0,
1/2)$ с множеством $\partial V$ внутри шара $B(z_0, 1/2).$ По
построению $F$ и $G$ удовлетворяют соотношениям вида (\ref{eq21}).
Пусть $\gamma\in \Gamma(E, F, D),$ тогда геометрически очевидно, что
длина любой такой кривой $\gamma$ не меньше длины разреза $I,$ т.е.
не меньше 1. В таком случае, функция $\rho(z)=1$ при $z\in D$ и
$\rho(z)=0$ при $z\not\in D$ является допустимой для семейства
$\Gamma(E, F, D),$ поскольку
$$\int\limits_{\gamma}\rho(z)|dz|=\int\limits_{\gamma}|dz|=l(\gamma)\geqslant 1\,,$$
где $l(\gamma)$ обозначает длину кривой $\gamma.$ В таком случае, по
определению модуля семейств кривых как точной нижней грани
соответствующих интегралов, мы имеем:
$$M(\Gamma(E, F, D))\leqslant \int\limits_D1^2dm(z)=m(D)=\pi\,,$$
откуда вытекает соотношение (\ref{eq22}) при $P=\pi.$ Отсутствие
свойства слабой плоскости границы области $D$ в точке $z_0$
установлено.

\medskip
Следующее утверждение содержит расшифровку понятия локально
квазиконформной границы в терминах соотношений вида (\ref{eq1}) и
(\ref{eq3}). Его доказательство дословно повторяет доказательство
\cite[теорема~17.10]{Va}, и потому опускается.

\medskip
\begin{lemma}\label{lem2}
{\sl Пусть $D\subset {\Bbb M}^n$ -- область с локально
квазиконформной границей, тогда граница этой области является слабо
плоской и, в частности, является сильно достижимой. Более того,
окрестность $U$ в определении локально квазиконформной границы может
быть взята сколь угодно малой, при этом, в этом определении можно
считать $\varphi(x_0)=0.$}
\end{lemma}

\medskip
Справедлива также следующая лемма, обобщающая \cite[лемма~3.5]{Na}.

\medskip
\begin{lemma}\label{lem1}
{\sl Предположим, $D\subset {\Bbb M}^n$ -- область с локально
квазиконформной границей такая, что $\overline{D}$ является
компактом в ${\Bbb M}^n.$ Тогда тело $I(P)$ простого конца $P\subset
E_D$ состоит из одной точки $p\in \partial D$ и $d(\sigma_k,
\sigma_{k+1})>0$.}
\end{lemma}

\medskip
\begin{proof}
По предложению \ref{thabc1} имеем: $I(P)\subset \partial D.$
Покажем, что $I(P)$ -- одноточечное множество. Предположим
противное, то есть, существуют, по крайней мере, две точки $x, y\in
I(P).$ Тогда $d(x, y)=r>0.$ Пусть $D_m,$ $m=1,2,\ldots,$ --
последовательность областей в ${\Bbb M}^n,$ соответствующих простому
концу $P,$ тогда, согласно определению,
$I(P)=\bigcap\limits_{m=1}^{\infty}\overline{D_m}.$ В таком случае,
найдутся последовательности $x_m, y_m\in D_m$ такие, что
$x_m\rightarrow x$ и $y_m\rightarrow y$ при $m\rightarrow\infty.$ В
силу неравенства треугольника, $d(x_m, y_m)\geqslant r/2$ при
достаточно больших $m\geqslant m_0\in {\Bbb N}.$ Соединим точки
$x_m$ и $y_m$ кривыми $C_m$ в области $D_m.$ По построению
$d(C_m)\geqslant r/2$ при $m\geqslant m_0\in {\Bbb N}.$

Пусть $U_0$ -- произвольная окрестность точки $x,$ не содержащая
точки $y.$ По лемме \ref{lem2} область $D$ имеет слабо плоскую
границу, поэтому найдётся окрестность $V_0,$ такая, что для всяких
континуумов, $F$ и $G,$ пересекающих $\partial U_0$ и $\partial
V_0,$ выполняется условие
\begin{equation}\label{eq4}
M(\Gamma(E, F, D))\geqslant 1/2.
\end{equation}
Поскольку последовательность $x_m$ сходится к $x,$ то при всех
$m\geqslant m_1,$ $m_1\geqslant m_0,$ $m_1\in {\Bbb N},$ все точки
$x_m$ принадлежат окрестности $V_0.$ Таким образом, континуум $C_m$
пересекает $\partial U_0$ и $\partial V_0$ ввиду
\cite[теорема~1.I.5, $\S\,46$]{Ku}. Рассмотрим произвольную кривую
$C,$ соединяющую $\partial U_0\cap D$ и $\partial V_0\cap D.$ Тогда
ввиду (\ref{eq4}) мы будем иметь, что
$$M(\Gamma(C_m, C, D))\geqslant 1/2.$$
С другой стороны, очевидно, при больших $m\geqslant m_2,$ $m_2\in
{\Bbb N},$ выполнено соотношение $\Gamma(C_m, C, D)>\Gamma(\sigma_m,
C, D),$ откуда в силу минорирования модуля следует, что
$$M(\Gamma(\sigma_m, C, D))\geqslant M(\Gamma(C_m, C, D))\geqslant 1/2\,,$$
что противоречит соотношению (\ref{eqSIMPLE}). Полученное
противоречие указывает на неверность предположения о наличии не
менее двух точек во множестве $I(P).$

Осталось показать, что $d(\sigma_k, \sigma_{k+1})>0$. Предположим
противное, а именно, пусть при некотором $k\in {\Bbb N}$ выполнено
$d(\sigma_k, \sigma_{k+1})=0.$ Так как любое замкнутое подмножество
компакта -- компакт (см. \cite[теорема~2.II.4]{Ku}),
$\overline{\sigma_k}$ и $\overline{\sigma_{k+1}}$ --
непересекающиеся компактные подмножества $\overline{D}.$ Заметим,
что по определению
$$0=d(\sigma_k, \sigma_{k+1})=\inf\limits_{x\in \sigma_k, y\in \sigma_{k+1}}d(x, y)\,.$$
В силу определения точной нижней грани найдутся последовательности
$x_l\in \sigma_k, y_l\in\sigma_{k+1},$ такие, что $d(x_l,
y_l)\rightarrow d(\sigma_k, \sigma_{k+1})$ при $l\rightarrow\infty.$
Поскольку  $\overline{\sigma_k}$ и $\overline{\sigma_{k+1}}$ --
компакты, без ограничения общности рассуждений мы можем считать, что
обе последовательности $x_l$ и $y_l$ сходятся к точкам $x_0\in
\sigma_k$ и $y_0\in\sigma_{k+1},$ соответственно. Тогда в силу
неравенства треугольника
$$d(x_0, y_0)\leqslant d(x_0, x_l)+d(x_l, y_l)+d(y_l, y_0)\rightarrow 0,\quad l\rightarrow\infty\,,$$
откуда следует $x_0=y_0.$ Таким образом,
$\overline{\sigma_k}\cap\overline{\sigma_{k+1}}\ne \varnothing,$ то
есть, найдётся точка $p_0\in
\overline{\sigma_k}\cap\overline{\sigma_{k+1}}.$ Заметим, что
$p_0\in \partial D.$ Выберем произвольным образом окрестность $U$
точки $p_0,$ такую, что $\partial U\cap \sigma_k\ne\varnothing\ne
\sigma_{k+1}\cap \partial U.$ Ввиду леммы \ref{lem2} для каждого
$P>0$ существует окрестность $V\subset U$ этой же точки $p_0$ такая,
что $M(\Gamma(E, F, D))>P$ как только $E$ и $F$ пересекают $\partial
U$ и $\partial V.$ Заметим, что ввиду условия $p_0\in
\overline{\sigma_k}\cap\overline{\sigma_{k+1}}$ мы можем считать,
что условия $\partial V\cap \sigma_k\ne\varnothing\ne
\sigma_{k+1}\cap \partial V$ выполнены. Тогда $M(\Gamma(\sigma_k,
\sigma_{k+1}, D))=\infty$ ввиду произвольности $P>0.$ Последнее
противоречит свойству (\ref{eq5}), входящего в определение простого
конца. Полученное противоречие указывает на неверность предположения
$d(\sigma_k, \sigma_{k+1})=0.$ Лемма доказана.~$\Box$

\medskip
Следующее утверждение для пространства ${\Bbb R}^n$ и областей с
локально квазиконформными границами также доказано в
\cite[лемма~3.5]{Na}.

\medskip
\begin{lemma}\label{lem3}
{\sl Предположим, $D\subset {\Bbb M}^n$ -- область с локально
квазиконформной границей такая, что $\overline{D}$ является
компактом в ${\Bbb M}^n.$ Тогда для каждой точки $x_0\in\partial D$
найдётся простой конец $P,$ для которого $I(P)=\{x_0\}.$ }
\end{lemma}

\medskip
Пусть $x_0\in
\partial D$ и $\varphi$ -- квазиконформное отображение из
определения локально квазиконформной границы. Ввиду леммы \ref{lem2}
мы можем считать, что $\varphi(x_0)=0.$ Следуя началу доказательства
этой леммы, заключаем, что найдётся последовательность сфер $S(0,
1/2^k),$ $k=1,2,\ldots,$ и убывающая последовательность окрестностей
$V_k$ точки $x_0,$ для которых $\varphi(V_k)=B(0, 1/2^k),$
$\varphi(\partial V_k\cap D)=S(0, 1/2^k)\cap {\Bbb B}^n_+,$ где
$\varphi$ -- квазиконформное отображение, соответствующее
определению локально квазиконформной границы. Заметим, что
последовательность областей $V_k$ соответствует простому концу $P$ с
требуемыми свойствами, где $\sigma_k:=\partial V_k\cap D.$

Для доказательства этого заметим, прежде всего, что $\sigma_k,$
действительно, является разрезом, поскольку $V_k$ и $D\setminus
\overline{V_k},$ действительно, являются различными компонентами
связности $D\setminus \sigma_k,$ при этом, $\sigma_{k+1}\in V_k.$
Условия $\partial\sigma_k\cap D=\varnothing$ и
$\partial\sigma_k\cap\partial D\ne\varnothing,$ участвующие в
определении разреза, выполняются (как мы отметили ранее, $\sigma_k$
отождествляется с поверхностью $\sigma_k(w)=(\varphi^{\,-1}\circ
S_k)(w),$ где $S_k:\omega\rightarrow {\Bbb R}^n$ обозначает
некоторую параметризацию полусферы $S(0, 1/2^k),$ $\omega$ --
соответствующая этой параметризации область в пространстве ${\Bbb
R}^{n-1}$ и $w\in \omega$).

\medskip
Проверим теперь условия (i)--(ii) из определения цепи и требования
(\ref{eq5})--(\ref{eqSIMPLE}).  Как уже было отмечено выше,
$\sigma_{k+1}$ содержится в $V_k,$ кроме того, $\sigma_{k-1}\in
D\setminus V_k$ по построению. Наконец,
$\bigcap\limits_{k=1}^{\infty}V_k=\varnothing,$ поскольку, в
противном случае, мы имели бы точку $p_0\in
\bigcap\limits_{k=1}^{\infty}V_k,$ однако, тогда также
$\varphi(p_0)\in \bigcap\limits_{k=1}^{\infty}B_+(0, 1/2^k),$ что не
имеет места. Условие (ii), таким образом, также выполняется.

Осталось убедиться в выполнении условий
(\ref{eq5})--(\ref{eqSIMPLE}). Действительно, так как
$\overline{\sigma_k}$ и $\overline{\sigma_{k+1}}$ не пересекаются,
то $r:={\rm dist}\,(\sigma_k, \sigma_{k+1})>0.$ Тогда функция
$\rho(x),$ равная $1/r$ при $x\in D$ и $\rho(x)=0$ при $x\not\in D,$
допустима для семейства $\Gamma(\sigma_k, \sigma_{k+1}, D).$ Так как
$\overline{D}$ -- компакт, множество $D$ имеет  конечный объём
$v(D),$ поскольку $\overline{D}$ можно покрыть конечным числом
окрестностей конечного объёма. Значит,
$$M(\Gamma(\sigma_k, \sigma_{k+1}, D))\leqslant \int\limits_{{\Bbb M}^n}\frac{dv(x)}{r^n}
\leqslant \frac{v(D)}{r^n}<\infty\,.$$
Чтобы проверить условие (\ref{eqSIMPLE}), выберем произвольный
континуум $C\subset D.$ Заметим, что $C\subset D\setminus V_k$ при
некотором достаточно большом $k\in {\Bbb N}.$ Тогда
\begin{equation}\label{eq6}
\Gamma(C, \sigma_m, D)>\Gamma (\partial V_k\cap D, \sigma_m, V_k\cap
D)
\end{equation}
при всех $m>k.$ Кроме того, заметим, что
\begin{equation}\label{eq7}
\varphi(\Gamma (\partial V_k\cap D, \sigma_m, V_k\cap
D))=\Gamma(S_+(0, 1/2^k), S_+(0, 1/2^m), B_+(0, 1/2^k))
\end{equation}
и что согласно \cite[разд.~7.5]{Va}
$$M(\Gamma(S_+(0, 1/2^k), S_+(0,
1/2^m), B_+(0, 1/2^k))\leqslant$$
\begin{equation}\label{eq8}\leqslant M(\Gamma(S(0, 1/2^k), S(0, 1/2^m), B(0,
1/2^k)\setminus \overline{B(0, 1/2^m)}))=\end{equation}
$$=\frac{\omega_{n-1}}{\left(\log\frac{2^m}{2^k}\right)^{n-1}}\rightarrow
0,\quad m\rightarrow\infty\,.$$
Окончательно, из (\ref{eq6}), (\ref{eq7}) и (\ref{eq8}) ввиду
свойства минорирования модуля вытекает, что
$$M(\varphi(\Gamma(C, \sigma_m, D)))\leqslant \frac{\omega_{n-1}}{(\log\frac{2^m}{2^k})^{n-1}}\rightarrow
0,\quad m\rightarrow\infty\,.$$
Однако, так как $\varphi$ -- квазиконформное отображение, то из
последнего соотношения также вытекает, что $M(\Gamma(C, \sigma_m,
D))\rightarrow 0$ при $m\rightarrow\infty,$ что и завершает
доказательство леммы.~$\Box$
\end{proof}

\medskip
Следующее фундаментальное утверждение также доказано Някки в случае
${\Bbb R}^n$ (см. \cite[теорема~4.1]{Na}).

\medskip
\begin{theorem}\label{th1}
{\sl Пусть $D_0, D$ -- области с компактными замыканиями на
римановых многообразиях ${\Bbb M}^n$ и ${\Bbb M}^n_*,$
соответственно, и пусть $D_0$ -- область с локально квазиконформной
границей. Предположим, $f$ -- квазиконформное отображение области
$D_0$ на $D.$ Тогда существует взаимно однозначное соответствие
между точками границы области $D_0$ и простыми концами области $D.$}
\end{theorem}

\medskip
\begin{proof}
Прежде всего, установим, что между простыми концами областей $D$ и
$D_0$ имеется взаимно однозначное соответствие. Действительно, пусть
$P$ -- простой конец в $D$ и $\sigma_k,$ $k=1,2,\ldots ,$ --
соответствующая ему цепь разрезов. Заметим, прежде всего, что
последовательность $f(\sigma_k),$ $k=1,2,\ldots ,$ также образует
цепь разрезов. В самом деле, если $D\setminus\sigma_k$ состоит из
двух и более компонент, то $f(D)\setminus f(\sigma_k)$ также не
может быть связным множеством. Кроме того, если
$\partial\sigma_k\cap D=\varnothing$ и $\partial\sigma_k\cap\partial
D\ne\varnothing,$ то ввиду гомеоморфности отображения $f$ также и
$\partial f(\sigma_k)\cap f(D)=\varnothing$ и $\partial
f(\sigma_k)\cap\partial f(D)\ne\varnothing.$ Заметим также, что
выполнены условия (i)-(ii) из определения цепи разрезов: (i)
множество $f(\sigma_{m+1})$ содержится в точности в одной компоненте
$f(d_m)$ множества $f(D)\setminus f(\sigma_m)$, при этом,
$f(\sigma_{m-1})\subset f(D)\setminus (f(\sigma_m)\cup f(d_m))$;
(ii) $\cap\,f(d_m)=\varnothing$, где $f(d_m)$ -- компонента
$f(D)\setminus f(\sigma_m)$, содержащая $f(\sigma_{m+1})$. Наконец,
условия вида (\ref{eq5})--(\ref{eqSIMPLE}) выполнены для
последовательности $f(\sigma_m),$ $m=1,2,\ldots ,$ ввиду
квазиконформности $f.$ Таким образом, отображение $f$ может быть
продолжено до отображения $f:\overline{D}_P\rightarrow
\overline{D_0}_P,$ которое сюрьективно и инъективно.

\medskip
Таким образом, для доказательства утверждения теоремы \ref{th1}
достаточно установить взаимно однозначное соответствие между
$E_{D_0}$ и $\partial D_0.$ Пусть далее $h$ -- тождественное
отображение области $D_0$ на $D_0.$ Будем следовать схеме
доказательства \cite[теорема~4.1]{Na}. Пусть $P\in E_{D_0},$ тогда
положим
%
%\begin{equation}\label{eq17}
$$h(P)=I(P)\,.$$
%\end{equation}
%
Ввиду леммы \ref{lem1} множество $I(P)$ состоит из единственной
граничной точки $b\in \partial D_0$, а по лемме \ref{lem3} указанное
соответствие является сюрьективным отображением $E_{D_0}$ на
$\partial D_0.$ Покажем, что $h$ является также и инъективным
отображением на множестве $E_{D_0}.$ Предположим противное, а
именно, что найдётся точка $b\in\partial D_0$ и два различных
простых конца $P_1\ne P_2,$ $P_1, P_2\in E_{D_0},$ такие, что
$I(P_1)=I(P_2)=b.$ Предположим, $D_i$ -- последовательность
областей, соответствующая простому концу $P_1.$ Согласно определению
\begin{equation}\label{eq9}
\bigcap\limits_{i=1}^{\infty}\overline{D_i}=b\,.
\end{equation}
Пусть $G_i,$ $i=1,2,\ldots,$ -- последовательность областей,
соответствующая простому концу $P_2,$ тогда также
\begin{equation}\label{eq10}
\bigcap\limits_{i=1}^{\infty}\overline{G_i}=b\,.
\end{equation}
Так как по предположению $P_1\ne P_2,$ то соответствующие им цепи
разрезов не эквивалентны, т.е., либо область $D_i$ (при некотором
$i\in {\Bbb N}$) не содержит все области $G_k,$ кроме конечного
числа, либо область $G_m$ (при некотором $m\in {\Bbb N}$) не
содержит все области $G_s,$ кроме конечного числа. Другими словами,
выполнено одно из двух: 1) либо найдутся $i\in {\Bbb N},$
возрастающая последовательность элементов $k_l,$ $l=1,2, \ldots,$ и
элементы $a_{k_l}\in G_{k_l}$ такие, что $a_{k_l}\in D_0\setminus
D_i;$ 2) либо найдутся $m\in {\Bbb N},$ возрастающая
последовательность элементов $r_l,$ $l=1,2, \ldots,$ и элементы
$c_{r_l}\in D_{r_l}$ такие, что $c_{r_l}\in D_0\setminus G_m.$ Так
как $\overline{D_0}$ -- компакт, то мы можем считать, что и в
первом, и во втором случае последовательности $b_{k_l}$ либо
$c_{r_l}$ являются сходящимися, причём ввиду (\ref{eq9}) и
(\ref{eq10}) они могут сходиться только к точке $b.$

\medskip
В любом из этих двух случаев мы имеем последовательность элементов
$b_l\in D,$ $l=1,2,\ldots,$ сходящуюся при $l\rightarrow\infty$ к
$b$ и лежащую в $D_0\setminus D_i$ (либо в $D_0\setminus G_m$) при
всех $l\in {\Bbb N}.$ Пусть для определённости указанная
последовательность $b_l$ лежит в $D_0\setminus D_i$ при всех $l=1,2,
\ldots ,$ и пусть $\sigma_i$ -- цепь разрезов, соответствующих
последовательности областей $D_i.$

\medskip
Докажем, что при сделанных предположениях $b\in \overline{\sigma_k}$
при всех $k\geqslant i.$ Если $b\in \overline{\sigma_k}$ хотя бы при
одном $k\geqslant i,$ то найдётся окрестность $U$ точки $b,$ такая
что $U\cap \sigma_k=\varnothing,$ при этом, для некоторого
квазиконформного отображения $\varphi:U\rightarrow {\Bbb R}^n$
выполнялись бы условия $\varphi(U)={\Bbb B}^n$ и $\varphi(U\cap
D_0)={\Bbb B}^n_+.$ Таким образом, множество $U\cap D_0$ является
связным и, значит, оно принадлежит только одной из связных компонент
$D_0\setminus \sigma_k,$ а именно, либо $U\cap D_0\subset D_k,$ либо
$U\cap D_0\subset D_0\setminus \overline{D_k}.$ Так как
последовательность $b_l$ сходится при $l\rightarrow\infty$ к точке
$b,$ то $b_l\in U\cap D_0$ при больших $l\geqslant l_0,$ поэтому
случай $U\cap D_0\subset D_k$ невозможен, поскольку по предположению
$b_l$ лежит в $D_0\setminus D_i$ при всех $l=1,2, \ldots .$ В таком
случае, $U\cap D_0\subset D_0\setminus \overline{D_k},$ что также не
может иметь места, так как ввиду соотношения (\ref{eq9}) мы можем
найти последовательность элементов $a_m\in D_m,$ $m=1,2,\ldots,$
сходящуюся к $b$ при $m\rightarrow\infty,$ т.е., $U\cap D_0\cap
D_m\ne \varnothing$ при больших $m\geqslant k$ и, в частности,
$U\cap D_0\cap D_k\ne \varnothing.$ Полученное противоречие говорит
о том, что  $b\in \overline{\sigma_k}$ при всех $k\geqslant i.$
Тогда $d(\sigma_k, \sigma_{k+1})=0,$ что противоречит утверждению
леммы \ref{lem1}. Указанное противоречие говорит о том, что исходное
предположение о наличии различных простых концов $P_1\ne P_2,$ $P_1,
P_2\in E_{D_0},$ таких что $I(P_1)=I(P_2)=b,$ было неверным. Теорема
доказана.~$\Box$
\end{proof}

\medskip
\begin{corollary}\label{cor1}
{\sl Пусть $D_0, D$ -- области с компактными замыканиями на
римановых многообразиях ${\Bbb M}^n$ и ${\Bbb M}^n_*,$
соответственно, имеющие локально квазиконформную границу.
Предположим, $f:D_0\rightarrow D$ -- квазиконформное отображение
области $D_0$ на $D.$ Тогда $f$ продолжается до гомеоморфизма
$\overline{D_0}$ на $\overline{D}.$}
\end{corollary}

\medskip
\begin{proof}
Пусть $x_m\in D_0,$ $x_m\stackrel{d}{\rightarrow} x_0\in
\overline{D_0}$ при $m\rightarrow\infty$ -- произвольная
последовательность. Нужно показать, что существует
$\lim\limits_{m\rightarrow\infty}f(x_m)$ в метрике $d_*.$ Если $x_0$
-- внутренняя точка $D_0,$ доказывать нечего. Пусть $x_0\in \partial
D_0.$ По теореме \ref{th1} найдётся единственный простой конец
$P_0\in E_{D_0}$ такой, что $x_0=I(P_0).$ По этой же теореме
простому концу $P$ соответствует единственный простой конец области
$D,$ а именно простой конец $f(P_0),$ более того, найдётся точка
$y_0\in D$ такая, что $y_0=I(f(P_0)).$ Пусть $\varphi$ --
квазиконформное отображение из определения локально квазиконформной
границы, соответствующее точке $x_0.$  Как уже было установлено при
доказательстве леммы \ref{lem2}, мы можем считать, что
$\varphi(x_0)=0.$ Следуя началу доказательства этой леммы,
заключаем, что найдётся последовательность сфер $S(0, 1/2^k),$
$k=1,2,\ldots,$ и убывающая последовательность окрестностей $V_k$
точки $x_0,$ для которых $\varphi(V_k)=B(0, 1/2^k),$
$\varphi(\partial V_k\cap D)=S(0, 1/2^k)\cap {\Bbb B}^n_+,$ где
$\varphi$ -- квазиконформное отображение, соответствующее
определению локально квазиконформной границы. Заметим, что
последовательность областей $V_k$ соответствует простому концу $P$ с
требуемыми свойствами, где $\sigma_k:=\partial V_k\cap D$ (этот факт
был установлен при доказательстве леммы \ref{lem3}).

Отсюда следует, что $x_m\in V_k $ при каждом фиксированном $k\in
{\Bbb N}$ и всех $m\geqslant m_0(k),$ где $m_0\in {\Bbb N}.$

Выберем произвольно $\varepsilon>0.$ Так как $y_0=I(f(P_0)),$
найдётся номер $k_0(\varepsilon)\in {\Bbb N}:$ $f(V_k)\subset B(y_0,
\varepsilon)$ при всех $k\geqslant k_0.$ Положим
$M(\varepsilon):=m_0(k_0(\varepsilon)).$ Тогда при $m\geqslant
M(\varepsilon)$ имеем $|f(x_m)-y_0|<\varepsilon,$ поскольку $x_m\in
V_{k_0},$ а $f(V_{k_0})\in B(y_0, \varepsilon).$ Отсюда следует, что
$f(x_m)\stackrel{d_*}{\rightarrow} y_0,$ что и доказывает
непрерывность отображения $f:\overline{D_0}\rightarrow
\overline{D}.$

Осталось установить, что $f(\overline{D_0})=\overline{D}.$ Очевидно,
$f(\overline{D_0})\subset\overline{D}.$ Покажем обратное включение.
Пусть $y_0\in \overline{D}.$ Если $y_0\in D,$ то, очевидно, $y_0\in
f(D_0).$ Пусть теперь $y_0\in\partial D.$ По теореме \ref{th1}
найдутся единственные простые концы $P_0\in E_{D_0}$ и $f(P_0)\in
E_D$ такие, что $y_0=I(f(P_0))$ и, кроме того, найдётся $x_0\in
\partial D_0$ такая, что $x_0=I(P_0).$ Следовательно, найдётся также
последовательность $x_k\in D_0,$ такая что
$x_k\stackrel{d}{\rightarrow}x_0.$ По доказанному выше $f(x_0)=y_0.$
Следствие доказано.
\end{proof}~$\Box$

\medskip
\begin{remark}\label{rem1}
Обозначим $\overline{D_0}_P:=D_0\cup E_{D_0},$ где $E_{D_0}$ --
множество всех простых концов области $D_0.$ Пусть $D_0, D$ --
области с компактными замыканиями на римановых многообразиях ${\Bbb
M}^n$ и ${\Bbb M}^n_*,$ соответственно, и пусть $D_0$ -- область с
локально квазиконформной границей. Руководствуясь теоремой
\ref{th1}, положим
\begin{equation}\label{eq18}
h(x)=\left\{\begin{array}{rr} x,& x\in D_0,\\ I(x), & x\in
E_{D_0}\,,
\end{array} \right.
\end{equation}
где, как и прежде, $I(x)$ обозначает тело простого конца $x\in
E_{D_0}.$ Ввиду теоремы \ref{th1} отображение $h$ взаимнооднозначно
отображает $\overline{D_0}_P$ на $\overline{D_0};$ в частности, $h$
взаимнооднозначно отображает $E_{D_0}$ на $\partial D_0.$

Если $\overline{D}_P$ является пополнением регулярной области $D$ её
простыми концами и $g_0$ является квазиконформным отображением
области $D_0$ с локально квазиконформной границей на $D$, то оно
естественным образом определяет в $\overline{D}_P$ метрику
\begin{equation}\label{eq16} \rho_0(p_1,p_2)=d\left( h(g_0^{-1}(p_1)),
h(g_0^{-1}(p_2))\right)\,.
\end{equation}

Если $g_*$ является другим квазиконформным отображением некоторой
области $D_*$ с локально квазиконформной границей на область $D$, то
соответствующая метрика
\begin{equation}\label{eq13}
\rho_*(p_1, p_2)=d\left(h(g_*^{\,-1}(p_1)),
h(g_*^{\,-1}(p_2))\right)
\end{equation}
порождает ту же самую сходимость и, следовательно, ту же самую
топологию в $\overline{D}_P$ как и метрика $\rho_0$, поскольку
$g^{\,-1}_0\circ g_*$ является квазиконформным отображением между
областями $D_*$ и $D_0$, которое по теореме \ref{th1} продолжается
до гомеоморфизма между $\overline{D_*}$ и $\overline{D_0}$.

\medskip
В дальнейшем, будем называть данную топологию в пространстве
$\overline{D}_P$ {\it топологией простых концов} и понимать
непрерывность отображений
$F:\overline{D}_P\rightarrow\overline{D^{\,\prime}}_P$ как раз
относительно этой топологии.
\end{remark}

\medskip
\begin{remark}\label{rem2}
Пусть $D_0, D$ -- области с компактными замыканиями на римановых
многообразиях ${\Bbb M}^n$ и ${\Bbb M}^n_*,$ соответственно, и пусть
$D_0$ -- область с локально квазиконформной границей. Заметим, что
метрическое пространство $(\overline{D}_P, \rho_0)$ компактно. В
самом деле, пусть у нас есть последовательность элементов $x_k\in
\overline{D}_P,$ $k=1,2, \ldots,$ и $g_0$ является квазиконформным
отображением области $D_0$ с локально квазиконформной границей на
$D,$ которому соответствует метрика $\rho_0,$ определённая
соотношением (\ref{eq16}). Тогда $z_k:=h(g_0^{\,-1}(x_k))$ --
последовательность элементов в $\overline{D_0},$ где $h$ определено
соотношением (\ref{eq18}). Так как $\overline{D_0}$ предполагалось
компактным множеством, то из последовательности $z_k$ можно извлечь
сходящуюся подпоследовательность $z_{k_l},$ $l=1,2,\ldots,$ к
некоторой точке $z_0\in \overline{D_0}.$ Точке $z_0$ соответствует
некоторый простой конец $P_0\in E_{D_0}$ (точка $P_0\in D_0$),
которому, в свою очередь, соответствует простой конец $f(P_0)\in
E_D$ (точка $f(P_0)\in D$).
\end{remark}

\medskip
Из теоремы \ref{th1} с учётом замечания \ref{rem1} вытекает
следующее утверждение, обобщающее классический результат Някки для
пространства ${\Bbb R}^n$ (см. \cite[теорема~4.2]{Na}).

\medskip
\begin{theorem}\label{th2}
{\sl Пусть $D_0, D$ -- области с компактными замыканиями на
римановых многообразиях ${\Bbb M}^n$ и ${\Bbb M}^n_*,$
соответственно, и пусть $D_0$ -- область с локально квазиконформной
границей. Предположим, $f$ -- квазиконформное отображение области
$D_0$ на $D.$ Тогда $f$ продолжается до гомеоморфизма
$f:\overline{D_0}\rightarrow \overline{D}_P.$ }
\end{theorem}

\medskip
{\bf 3. Основная лемма о регулярных концах.} В настоящем разделе
рассматриваются области, содержащие регулярные цепи разрезов.
Следующее утверждение является аналогом \cite[лемма~1]{KR} на
римановых многообразиях.

\medskip
\begin{lemma}\label{thabc2} {\sl Каждый регулярный конец $K$
области $D\subset{\Bbb M}^n,$ имеющей компактное замыкание
$\overline{D}\subset{\Bbb M}^n$ содержит в себе цепь разрезов
$\sigma_m$, лежащих на сферах $S_m$ с центром в некоторой точке
$x_0\in\partial D$ и геодезическими радиусами $\rho_m\rightarrow 0$
при $m\rightarrow\infty.$ }\end{lemma}

\begin{proof} Пусть $\{\sigma_m\}$ -- цепь разрезов в конце $K$ и $x_m$ --
последовательность точек в $\sigma_m.$ Без ограничения общности
можем считать, что $x_m\rightarrow x_0\in\partial D$ при
$m\rightarrow\infty$, поскольку $\overline{D}$ -- компакт.
Положим
$$\rho^-_m:=d(x_0,\sigma_m)\,.$$
По неравенству треугольника $d(x_0, \sigma_m)\leqslant d(x_0,
x_m)+d(x_m, y)\leqslant d(x_0, x_m)+d(\sigma_m).$ Поскольку
$d(\sigma_m)\rightarrow 0$ при $m\rightarrow\infty,$ отсюда следует,
что
$$\rho^-_m\rightarrow 0,\quad m\rightarrow\infty\,.$$
Кроме того,
$$\rho^+_m:=H(x_0,\sigma_m)=\sup\limits_{x\in\sigma_m}d(x,x_0)=
\sup\limits_{x\in\overline{\sigma_m}}d(x,x_0)$$
-- хаусдорфово расстояние между компактными множествами $\{x_0\}$ и
$\overline{\sigma_m}$ в $\overline{D}.$ В силу всё того же
неравенства треугольника $d(x_0, x)\leqslant d(x_0, x_m)+d(x_m,
x)\leqslant d(x_0, x_m)+d(\sigma_m).$ Отсюда следует, что
$$\rho^+_m\rightarrow 0,\quad m\rightarrow\infty\,.$$
Согласно условию (i) в определении конца можем считать, без
ограничения общности, что $\rho^-_m>0.$ Кроме того, переходя, если
это нужно к подпоследовательности, мы можем считать, что
$\rho^+_{m+1}<\rho^-_m$ для всех $m=1,2,\ldots$.

Положим $$\delta_m=\Delta_m\setminus d_{m+1},$$ где
$\Delta_m=S_m\cap d_m$ и
$$S_m=\left\{x\in {\Bbb M}^n: d(x_0,x)=\frac{1}{2}\left(\rho^-_m+\rho^+_{m+1}\right)\right\}\,.$$
Очевидно, что $\Delta_m$ и $\delta_m$ относительно замкнуты в $d_m.$

Заметим, что $d_{m+1}$ содержится в одной из компонент связности
открытого множества $d_m\setminus\delta_m$. Действительно,
предположим, что пара точек $x_1$ и $x_2\in d_{m+1}$ находится в
различных компонентах $\Omega_1$ и $\Omega_2$ множества
$d_m\setminus\delta_m$. Поскольку на римановых многообразиях
открытые связные множества являются также и линейно связными (см.
\cite[предложение~13.1]{MRSY}), точки $x_1$ и $x_2$ могут быть
соединены кривой $\gamma:[0,1]\rightarrow d_{m+1}$. Однако, по
построению, $d_{m+1}$, а поэтому и $\gamma$, не пересекают
$\delta_m$, следовательно,
$[0,1]=\bigcup\limits_{k=1}^{\infty}\omega_k$, где
$\omega_k=\gamma^{-1}(\Omega_k)$, $\Omega_k$ -- перенумерация
компонент $d_m\setminus\delta_m$ (поскольку многообразие ${\Bbb
M}^n$ локально связно, все компоненты $\Omega_k$ множества
$d_m\setminus\delta_m$ открыты и их не более, чем счётно, см.
\cite[теоремы~4 и 6, разд.~6.49.II]{Ku}). Но $\omega_k$ является
открытым в $[0,1]$, поскольку $\Omega_k$ открыто и $\gamma$
непрерывна. Последнее противоречит связности $[0,1]$, так как
$\omega_1\ne\varnothing$ и $\omega_2\ne\varnothing$ и, кроме того,
$\omega_i$ и $\omega_j$ попарно не пересекаются при $i\ne j$.

Пусть $d^*_m$ компонента $d_m\setminus\delta_m$, содержащая
$d_{m+1}$. Тогда по построению $d_{m+1}\subset d^*_m\subset d_m$.
Покажем, что $\partial d^*_m\setminus\partial D\subset\delta_m$.
Во-первых, очевидно, что $\partial d^*_m\setminus\partial
D\subset\delta_m\cup\sigma_m.$ (Действительно, если бы нашлась точка
$x\in (\partial d^*_m\setminus\partial D)\setminus
(\delta_m\cup\sigma_m),$ то ввиду включений $d^*_m\subset d_m,$ и
$\partial d_m\cap D\subset \sigma_m,$ мы имели бы $x\in
(\overline{d_m}\setminus\partial D)\setminus
(\delta_m\cup\sigma_m)\subset (\sigma_m\cup d_m)\setminus
(\delta_m\cup\sigma_m)=d_m\setminus \delta_m.$ С другой стороны,
всякая точка в $d_m\setminus\delta_m$ принадлежит либо $d^*_m$, либо
другой компоненте $d_m\setminus\delta_m$, и поэтому не принадлежит
границе $d^*_m$, ввиду относительной замкнутости $\delta_m$ в $d_m.$
Полученное противоречие указывает на справедливость включения
$\partial d^*_m\setminus\partial D\subset\delta_m\cup\sigma_m$).
Таким образом, достаточно доказать, что $\sigma_m\cap\partial
d^*_m\setminus\partial D=\varnothing$.

Предположим, что существует точка $x_*\in\sigma_m$ в $\partial
d^*_m\setminus\partial D$. Покажем, что найдется точка $y_*\in
d^*_m$, достаточно близкая к $\sigma_m$, такая что
\begin{equation}\label{eq9A}
d(x_0,y_*)>\frac{1}{2}\left(\rho^-_m+\rho^+_{m+1}\right)\,.\end{equation}
В самом деле, по определению точной нижней грани $\rho^-_m\leqslant
d(x_0, x_*).$ Поскольку согласно сделанному выше предположению
$\partial d^*_m\setminus\partial D,$ найдётся последовательность
$x_k\in d^*_m\setminus\partial D,$ $k=1,2,\ldots ,$ такая что
$d(x_k, x_*)<1/k.$ По неравенству треугольника $d(x_0,
x_*)<1/k+d(x_k, x_*).$ Так как неравенство $\rho^+_{m+1}<\rho^-_m$
-- строгое, то из последнего неравенства при некотором достаточно
большом $k\in {\Bbb N}$ имеем
$$d(x_0, x_k)>d(x_0, x_*)-1/k\geqslant \rho^-_m-1/k>\frac{1}{2}\left(\rho^-_m+\rho^+_{m+1}\right)\,,$$
что совпадает с неравенством (\ref{eq9A}) при $y_*=x_k.$

На основании аналогичных рассуждений, найдется точка $z_*\in
d_{m+1}$, достаточно близкая к $\sigma_{m+1}$, такая, что
$$d(x_0,z_*)<\frac{1}{2}\left(\rho^-_m+\rho^+_{m+1}\right)\,.$$
Кроме того, точки $z_*$ и $y_*$  могут быть соединены непрерывной
кривой $\gamma:[0,1]\rightarrow d^*_{m+1}$. Заметим, что множества
$\gamma^{-1}(d^*_m\setminus\overline{d_{m+1}})$ состоят из счетного
набора открытых непересекающихся интервалов из $[0,1]$ и интервала
$(t_0,1]$ с $t_0\in(0,1)$, и $z_0=\gamma(t_0)\in\sigma_{m+1}$. Таким
образом,
\begin{equation}\label{eq11}
d(x_0,z_0)<\frac{1}{2}\left(\rho^-_m+\rho^+_{m+1}\right)\,,
\end{equation}
поскольку $d(x_0,z_0)\leqslant\rho^+_{m+1}$ и
$\rho^+_{m+1}<\rho^-_m$. Из (\ref{eq9A}) и (\ref{eq11}), в силу
непрерывности функции $\varphi(t)=h(x_0,\gamma(t)),$ вытекает
существование точки $\tau_0\in(t_0,1)$ такой, что
$$d(x_0,y_0)=\frac{1}{2}\left(\rho^-_m+\rho^+_{m+1}\right)\,,$$
где $y_0=\gamma(\tau_0)\in d^*_m$ в силу выбора $\gamma$. Полученное
противоречие показывает, что наше предположение неверно, так что
$\partial d^*_m\setminus\partial D\subset\delta_m$.

\medskip
В наших рассуждениях в качестве цепи разрезов следует взять
множества $\delta_m,$ а в качестве последовательности
соответствующих областей -- последовательность $d^*_m,$
$m=1,2,\ldots .$ Остаётся показать, что данные множества $\delta_m$
действительно образуют цепь разрезов в смысле свойств (i)--(ii),
приведенных в первой части работы.

\medskip
Заметим, прежде всего, что множества $\delta_m$ удовлетворяют
определению разреза, а именно, проверим следующие условия: 1)
множество $D\setminus \delta_m$ имеет больше одной компоненты, 2)
$\partial\delta_m\cap D=\varnothing$ и 3)
$\partial\delta_m\cap\partial D\ne\varnothing$. В самом деле, 1)
область $d_m^{\,*}$ является одной из компонент $D\setminus
\delta_m$ ввиду определения $d_m^{\,*},$ кроме того, если бы
$D\setminus \delta_m$ состояло бы из одной компоненты связности, то
любые две точки $x_1, x_2\in D\setminus \delta_m$ можно было бы
связать кривой $\gamma$ в $D\setminus \delta_m$ (так как открытое
связное множество на римановом многообразии является линейно связным
ввиду \cite[следствие~13.1]{MRSY}). Выберем $x_1\in d_m^{\,*},$
$x_2\in D\setminus d_m.$ Заметим, что $x_1$ и $x_2$ лежат в
$D\setminus \delta_m$ по построению. Поскольку $d_m^{\,*}\subset
d_m,$ то кривая $\gamma,$ соединяющая точки $x_1$ и $x_2,$ не лежит
целиком ни в $d_m^{*},$ ни в $D\setminus d_m^{*},$ поэтому эта
кривая ввиду \cite[теорема~1.I.5, $\S\,46$]{Ku}) пересекает
$\partial d_m^{\,*}\cap D\subset \delta_m,$ что противоречит
сделанному предположению. Значит, $D\setminus \delta_m$ имеет более
одной компоненты.

Осталось установить условия 2) и 3). Для этого установим сначала
соотношение
\begin{equation}\label{eq12}
\overline{\delta_m}\cap\partial D\ne\varnothing\,.
\end{equation}
 Заметим, что сфера
$S_m=\frac{1}{2}\left(\rho^-_m+\rho^+_{m+1}\right)$ при достаточно
больших $m$ лежит в нормальной окрестности точки $x_0.$ Таким
образом, $S_m$ является связным множеством на многообразии ${\Bbb
M}^n,$ так как в локальных координатах множество $S_m$ представляет
собой евклидову сферу (см. \cite[лемма~5.10 и следствие~6.11]{Lee}).
Тогда $(S_m\cap\overline{\delta_m})\cap \overline{S_m\setminus
\delta_m}\ne\varnothing$ ввиду связности $S_m,$
$\delta_m=\Delta_m\setminus d_{m+1}$ и $\Delta_m=S_m\cap d_m$ (см.
\cite[определение~5.I.46]{Ku}). Пусть $\zeta_0\in
(S_m\cap\overline{\delta_m})\cap \overline{S_m\setminus \delta_m},$
тогда, в частности,
\begin{equation}\label{eq2}
\zeta_0\in S_m\cap\overline{\delta_m}=S_m\cap\overline{S_m\cap
d_m\setminus d_{m+1}}\subset\overline{S_m\cap
d_m}\subset\overline{d_m}\,.
\end{equation}
Так как $\zeta_0\in \overline{S_m\setminus
\delta_m}=\overline{S_m\setminus ((d_m\cap S_m)\setminus d_{m+1})},$
то найдётся последовательность $\zeta_k\in S_m\setminus ((d_m\cap
S_m)\setminus d_{m+1})$ такая, что
$\zeta_0=\lim\limits_{k\rightarrow\infty}\zeta_k.$ Возможны две
ситуации: 1) когда бесконечное число элементов последовательности
$\zeta_k$ принадлежат множеству $S_m\setminus d_m;$ 2) данному
множеству принадлежат только конечное число элементов данной
последовательности.

В ситуации 1) мы имеем $\zeta_0\in \overline{S_m\setminus d_m},$ но
в силу (\ref{eq2}) мы также имеем, что $\zeta_0\in \overline{d_m}.$
Тогда $\zeta_0\in \partial d_m,$ что ввиду соотношения $(\partial
d_m\setminus \partial D)\cap S_m=\sigma_m\cap S_m=\varnothing$
(выполненного по построению сферы $S_m$) может быть возможно лишь в
ситуации $\zeta_0\in
\partial D.$ В ситуации 2) имеем $\zeta_0\in\overline{d_{m+1}}.$
Снова ввиду соотношения (\ref{eq2}) имеем
$\zeta_0\in\overline{S_m\setminus d_{m+1}},$ откуда вытекает, что
$\zeta_0\in\partial d_{m+1}.$ Так как по построению $(\partial
d_{m+1}\setminus
\partial D)\cap S_m=\sigma_{m+1}\cap S_m=\varnothing,$ то последнее
снова возможно лишь в случае $\zeta_0\in \partial D.$ Итак, в обеих
ситуациях 1) и 2) мы имеем точку $\zeta_0\in \partial D,$ причём
ввиду (\ref{eq2}) выполнено $\zeta_0\in\overline{\delta_m},$ что и
указывает на справедливость соотношения (\ref{eq12}).

\medskip
Покажем теперь справедливость условия 2) $\partial\delta_m\cap
D=\varnothing.$ В самом деле, если бы нашлась точка $\xi_0\in
\partial \delta_m=\partial (\Delta_m\setminus d_{m+1})=\overline{\Delta_m\setminus d_{m+1}}
\setminus (\Delta_m\setminus d_{m+1}),$ то это означало бы, что
нашлась бы последовательность $\xi_k,$ $k=1,2,\ldots,$ такая что
$\xi_k\in S_m\cap d_m\setminus d_{m+1}$ и $\xi_k\rightarrow \xi_0$
при $k\rightarrow\infty.$ В этом случае также либо $\xi_0\not\in
d_m,$ либо $\xi_0\in d_{m+1}.$ Тогда, соответственно, либо $\xi_0\in
\partial d_m\cap S_m,$ либо $\xi_0\in
\partial d_{m+1}\cap S_m.$ Так как по построению $(\partial
d_m\setminus \partial D)\cap S_m=\sigma_m\cap S_m=\varnothing$ и
$(\partial d_{m+1}\setminus \partial D)\cap S_m=\sigma_{m+1}\cap
S_m=\varnothing,$ каждый из этих двух случаев возможен лишь при
$\xi_0\in \partial D.$ Условие 2) $\partial\delta_m\cap
D=\varnothing$ установлено. Наконец, условие 3)
$\partial\delta_m\cap\partial D\ne\varnothing$ является следствием
условия 2) и соотношения (\ref{eq12}).

\medskip
Наконец, проверим условия цепи разрезов (i) множество $\delta_{m+1}$
содержится в точности в одной компоненте $d^{\,*}_m$ множества
$D\setminus \delta_m$, при этом, $\delta_{m-1}\subset D\setminus
(\delta_m\cup d^{\,*}_m)$; (ii) $\cap\,d^{\,\*}_m=\varnothing$, где
$d^{\,*}_m$ -- компонента $D\setminus \delta_m$, содержащая
$\delta_{m+1}$.

\medskip
Действительно, $\delta_{m+1}\subset d_{m+1}\subset d_m^{\,*}$ по
построению, причём $d_m^{\,*}$ -- некоторая компонента связности
множества $D\setminus \delta_m.$ Пусть, кроме того, $x\in
\delta_{m-1},$ тогда $x\not\in d_m^*,$ поскольку по построению
$d_m^*\subset d_{m-1}^*$ и $\partial d_{m-1}^*\subset \delta_{m-1}.$
В силу сказанного, $\delta_{m-1}\subset D\setminus (\delta_m\cup
d_m^*),$ т.е., выполнено условие (i). Наконец, пусть $y\in
\cap\,d^{\,\*}_m.$ Тогда также $y\in \cap\,d_m$ ввиду свойства
$d_{m+1}\subset d^{\,*}_m\subset d_m,$ $m=1,2,\ldots .$ Но последнее
невозможно, так как исходная последовательность областей $d_m$
образовывала пустое пересечение. Полученное противоречие указывает
на выполнение условия (ii). Лемма полностью доказана.~$\Box$
\end{proof}

\medskip
В дальнейшем, для заданной области $D$ в ${\Bbb M}^n$,
$n\geqslant2$, говорим, что последовательность точек $x_k\in D$,
$k=1,2,\ldots$, {\it сходится к концу} $K$, если для каждой цепи
$\{\sigma_m\}$ в $K$ и каждой области $d_m$ все точки $x_k$, за
исключением, быть может, конечного числа, принадлежат $d_m$. В этом
случае, мы пишем: $x_k\stackrel{\rho}{\rightarrow}P$ при
$k\rightarrow\infty,$ или даже $x_k\rightarrow P,$ если
недоразумение невозможно. Из определения метрики в пространстве
простых концов вытекает, что сходимость в указанном выше смысле
эквивалентна сходимости в пространстве $\overline{D}_P$ в смысле
соотношения (\ref{eq13}).

\medskip
{\bf 4. Граничное продолжение классов Олича--Соболева в терминах
простых концов.} Определение классов Орлича--Соболева $W_{loc}^{1,
\varphi}$ на римановых многообразиях, встречающееся ниже, может быть
найдено, напр., в работе \cite{IS}. Исследование этих классов в
случае гомеоморфизмов проведено, преимущественно, в статьях
\cite{ARS} и \cite{Af$_2$}. В случае отображений с ветвлением
некоторые важнейшие вопросы (локального и граничного поведения,
продолжения в изолированную точку границы и проч.) исследованы в
наших предыдущих статьях \cite{IS}--\cite{IS$_1$}. Ниже основной
акцент делается на граничном продолжении в терминах простых концов.

Напомним некоторые необходимые нам сведения. Прежде всего, определим
якобиан отображения $f$ в точке $x\in D$ как
$$J(x, f)=\limsup\limits_{r\rightarrow 0}\frac{v_*(f(B(x, r)))}{v(B(x, r))}\,,$$
где $v$ и $v_*$ --- объём в ${\Bbb M}^n$ и ${\Bbb M}^n_*,$
соответственно. Полагаем
$$L(x, f)=\limsup\limits_{y\rightarrow x}\frac{d_*(f(x), f(y))}{d(x,
y)}\,,\qquad l(x, f)=\liminf\limits_{y\rightarrow x}\frac{d_*(f(x),
f(y))}{d(x, y)}\,,$$
где $d$ и $d_*$ --- геодезические расстояния на ${\Bbb M}^n$ и
${\Bbb M}^n_*,$ соответственно. Для отображений с конечным
искажением и произвольного $p\geqslant 1$ корректно определена и
почти всюду конечна так называемая {\it внутренняя дилатация $K_{I,
p}(x,f)$ отображения $f$ порядка $p$ в точке $x$}, определяемая
равенствами
 \begin{equation}\label{eq0.1.1A}
K_{I, p}(x,f)\quad =\quad\left\{
\begin{array}{rr}
\frac{J(x,f)}{l^p(x, f)}, & J(x,f)\ne 0,\\
1,  &  f^{\,\prime}(x)=0, \\
\infty, & \text{в\,\,остальных\,\,случаях}.
\end{array}
\right.
 \end{equation}

\medskip
Пусть $X$ и $Y$ -- произвольные метрические пространства.
Отображение $f:X\rightarrow Y$ будем называть {\it дискретным,} если
для каждого $y\in Y$ множество $f^{\,-l}(y)$ состоит только из
изолированных точек. Отображение $f$ будем называть {\it открытым,}
если для каждого открытого множества $A\subset X$ множество $f(A)$
открыто в $Y.$ Для заданного $D\subset X,$ определим {\it предельное
множество} отображения $f:D\rightarrow Y$ в точке $b\in
\partial D,$ обозначаемое через $C(f, b),$ как множество всех точек $z\in Y$ для которых найдётся
последовательность $b_k\in D$ такая, что $b_k\rightarrow b$ и
$f(b_k)\rightarrow z$ при $k\rightarrow\infty.$ Пусть $E\subset
\partial D$ -- непустое множество, в таком случае, положим $C(f, E)=\cup C(f, b),$ где $b$
пробегает множество $E.$ Отображение $f:G\rightarrow Y$ будем
называть {\it замкнутым} в $G\subset X,$ если множество $f(A)$
замкнуто в $f(G)$ для каждого замкнутого множества $A\in G.$
Отображение $f$ будем называть {\it собственным,} если множество
$f^{\,-1}(K)$ компактно в $D,$ как только $K$ является компактным в
$f(D).$ Отображение $f$ будем называть {\it отображением,
сохраняющим границу,} если $C(f,
\partial D)\subset \partial f(D).$

Определение функций класса $FMO$ (конечного среднего колебания),
использующееся далее по тексту, также могут быть найдены в работах
\cite{IS}--\cite{IS$_1$}.

\medskip
Следующее утверждение в случае пространства ${\Bbb R}^n$ установлено
в \cite[теорема~3.3]{Vu$_1$}.

\medskip
\begin{proposition}\label{pr4}{\sl\, Пусть $n\geqslant 2,$  $D,
D^{\,\prime}$ -- области на римановых многообразиях ${\Bbb M}^n$ и
${\Bbb M}_*^n,$ соответственно, и пусть $f:D\rightarrow {\Bbb
M}_*^n$ -- открытое, дискретное и замкнутое отображение. Тогда $f$
также является сохраняющим границу и собственным отображением.  }
\end{proposition}

\medskip
\begin{proof}
Так как $f$ -- открыто, то  $D^{\,\prime}=f(D)$ является областью.
Покажем вначале, что $f$ сохраняет границу. Предположим противное,
тогда найдутся $x_0\in
\partial D$ и $y\in D^{\,\prime}$ такие, что $y\in C(f, x_0).$ Тогда найдётся
последовательность $x_k\in D,$ $x_k\stackrel{d}{\rightarrow} x_0$
при $k\rightarrow\infty,$ $x_k\in D,$ $k=1,2,\ldots ,$ такая что
$f(x_k)\stackrel{d_*}{\rightarrow} y$ при $k\rightarrow\infty.$

\medskip
Без ограничения общности, можно считать, что $f(x_k)\ne y$ при всех
$k=1,2\ldots .$ Действительно, возможны два случая: либо $f(x_k)\ne
y$ для сколь угодно больших номеров $k,$ либо $f(x_k)=y$ при всех
$k\geqslant k_0$ и некотором натуральном $k_0.$ В первом случае, по
определению, существует подпоследовательность $x_{k_l},$
$l=1,2,\ldots,$ сходящаяся к $x_0$ при $l\rightarrow\infty$ такая,
что $f(x_{k_l})\stackrel{d_*}{\rightarrow} y$ при
$l\rightarrow\infty$ и $f(k_l)\ne y,$ $l=1,2,\ldots .$ Во втором
случае, если $f(x_k)=y$ при всех $k\geqslant k_0$ и некотором
натуральном $k_0,$ воспользуемся непрерывностью $f:$ для каждого
$k\in {\Bbb N}$ найдётся $\delta_k>0$ такое, что
\begin{equation}\label{eq3A}
d_*(f(x), f(x_k))<1/k\quad \forall \,\,x\in B(x_k, \delta_k)\,.
\end{equation}
Можно считать, что $B(x_k, \delta_k)\subset D$ и $\delta_k<1/k.$ Так
как $f$ дискретно, найдётся точка $z_k\in B(x_k, \delta_k)$ такая,
что $f(z_k)\ne y.$ Тогда по неравенству треугольника,
$$d(z_k, x_0)\leqslant d(z_k, x_k)+d(x_k, x_0)\rightarrow 0\,,\quad
k\rightarrow\infty\,,$$ и одновременно, ввиду соотношения
(\ref{eq3A})
$$d_*(f(z_k), y)\leqslant d_*(f(z_k), f(x_k))+ d_*(f(x_k), y)<$$
$$<1/k+d_*(f(x_k), y)\rightarrow 0,\quad k\rightarrow\infty\,.$$

\medskip
Таким образом, $z_k\in D,$ $z_k\stackrel{d}{\rightarrow} x_0$ при
$k\rightarrow\infty,$ $f(z_k)\stackrel{d_*}{\rightarrow} y$ при
$k\rightarrow\infty,$ и $f(z_k)\ne y$ для каждого $k\in {\Bbb N}.$ С
другой стороны, заметим, что множество $\{x_k\}_{k=1}^{\infty}$
замкнуто в $D,$ однако, $\{f(x_k)\}_{k=1}^{\infty}$ не замкнуто в
$f(D),$ поскольку $y\not\in \{f(x_k)\}_{k=1}^{\infty}.$
Следовательно, $f$ не замкнуто в $D,$ что противоречит сделанному
предположению. Полученное противоречие указывает на то, что $f$
сохраняет границу.

\medskip
Осталось доказать собственность отображения $f,$ т.е., что множество
$f^{\,-1}(K)$ компактно при каждом компактном $K\subset
D^{\,\prime}.$ Если последнее не является верным, то найдётся
последовательность $x_k\in f^{\,-1}(K),$ такая что $x_k\rightarrow
x_0\in
\partial D.$ Как было показано выше, $f(x_k)\rightarrow y_0\in \partial D^{\,\prime},$
что противоречит компактности $K.$~$\Box$
\end{proof}

\medskip
Пусть $D$ --- область риманового многообразия ${\Bbb M}^n,$
$n\geqslant 2,$ $f\colon D \rightarrow {\Bbb M}_*^n$, $n\geqslant
2,$ --- отображение, $\beta\colon [a,\,b)\rightarrow {\Bbb M}_*^n$
--- некоторая кривая и $x\in\,f^{\,-1}\left(\beta(a)\right).$ Кривая
$\alpha\colon [a,\,c)\rightarrow D,$ $c\leqslant b,$ называется {\it
максимальным поднятием} кривой $\beta$ при отображении $f$ с началом
в точке $x,$ если $(1)\quad \alpha(a)=x;$ $(2)\quad
f\circ\alpha=\beta|_{[a,\,c)};$ $(3)$\quad если
$c<c^{\prime}\leqslant b,$ то не существует кривой
$\alpha^{\prime}\colon [a,\,c^{\prime})\rightarrow D,$ такой что
$\alpha=\alpha^{\prime}|_{[a,\,c)}$ и $f\circ
\alpha=\beta|_{[a,\,c^{\prime})}.$ Имеет место следующее утверждение
(см. также \cite[лемма~3.7]{Vu$_1$} для случая пространства ${\Bbb
R}^n$).

\medskip
 \begin{proposition}\label{pr7}
{\sl Пусть $n\geqslant 2,$ $D$~--- область в ${\Bbb M}^n,$ имеющая
компактное замыкание $\overline{D}\subset {\Bbb M}^n,$ $f\colon
D\rightarrow {\Bbb M}^n_*$~--- открытое дискретное и замкнутое
отображение\/{\em,} $\beta\colon [a,\,b)\rightarrow {\Bbb
M}_*^n$~--- кривая и точка $x\in\,f^{-1}\left(\beta(a)\right).$
Тогда кривая $\beta$ имеет максимальное поднятие $\alpha:[a,
c)\rightarrow D$ при отображении $f$ с началом в точке $x,$ при этом
$c=b.$ Кроме того, если $\beta$ продолжается до замкнутой кривой
$\beta:[a, b]\rightarrow {\Bbb M}^n_*,$ то и кривая $\alpha$
продолжается до замкнутой кривой $\alpha:[a, b]\rightarrow D,$
причём $f(\alpha(t))=\beta(t),$ $t\in [a, b].$ }
 \end{proposition}

\medskip
\begin{proof}
Существование максимального поднятия $\alpha:[a, c)\rightarrow D$
при отображении $f$ с началом в точке $x$ установлено ранее в
\cite[предложение~2.1]{IS}. Покажем, что в определении максимального
поднятия $c=b.$ Предположим противное: допустим, что $c\ne b.$

Прежде всего, заметим, что в этом случае ($c\ne b$) кривая $\alpha$
не может быть продолжена до замкнутой кривой $\alpha:[a,
c]\rightarrow D.$ Действительно, если бы последнее было возможно, то
мы смогли рассмотреть новое максимальное поднятие
$\alpha^{\,\prime}:[c, c^{\,\prime})\rightarrow D$ кривой
$\beta|_{[c, b)}$ при отображении $f$ с началом в точке $\alpha(c).$
Путём объединения между собой кривых $\alpha$ и $\alpha^{\,\prime},$
мы получили бы тогда новое <<максимальное поднятие>>
$\alpha^{\,\prime\prime}:[a, c^{\,\prime})\rightarrow D$ кривой
$\beta$ при отображении $f$ с началом в точке $x,$ что противоречило
бы <<максимальности>> кривой $\alpha$ (см. условие (3) из
определения максимального поднятия).

\medskip
Таким образом, предельное множество
$$C(\alpha, c)=\left\{x\in {\Bbb M}^n:\, x=\lim\limits_{k\rightarrow\,\infty}
\alpha(t_k) \right\}\,,\quad t_k\,\in\,[a,\,c)\,,\quad
\lim\limits_{k\rightarrow\infty}t_k=c\,,$$
либо пусто, либо содержит в себе не менее двух точек. Заметим, что
$C(\alpha, c)$ лежит в $D$ ввиду замкнутости отображения $f$ в $D,$
и что в определении $C(\alpha, c)$ мы можем ограничиться только
монотонными последовательностями $t_k.$ Кроме того, множество
$\overline{\alpha}$ является компактным, поскольку
$\overline{\alpha}$ -- замкнутое подмножество компактного
пространства $\overline{D}$ (см. \cite[теорема~2.II.4,
$\S\,41$]{Ku}). По условию Кантора на компакте $\overline{\alpha},$
ввиду монотонности последовательности связных множеств
$\alpha\left(\left[t_k,\,c\right)\right),$
$$C(\alpha, c)\,=\,\bigcap\limits_{k\,=\,1}^{\infty}\,\overline{\alpha\left(\left[t_k,\,c\right)\right)}
\ne\varnothing\,,
$$
%\end{equation}
%
см. \cite[1.II.4, $\S\,41$]{Ku}. Так как $\alpha$ не может быть
продолжено по непрерывности в точку $c$ и $C(\alpha,
c)\ne\varnothing,$ то множество $C(\alpha, c)$ содержит не менее
двух точек. Кроме того, ввиду \cite[теорема~5.II.5, $\S\,47$]{Ku}
множество $\overline{\alpha}$ является связным.

С другой стороны, при $x\in C(\alpha, c),$ ввиду непрерывности $f$
мы получим, что
$f\left(\alpha(t_k)\right)\rightarrow\,f(x)$ при
$k\rightarrow\infty,$ где $t_k\in[a,\,c),\,t_k\rightarrow c$ при
$k\rightarrow \infty.$ Однако,
$f\left(\alpha(t_k)\right)=\beta(t_k)\rightarrow\beta(c)$ при
$k\rightarrow\infty.$ Следовательно, отображение $f$ постоянно на
$C(\alpha, c)$ в ${\Bbb M}^n.$

\medskip
Итак, $G$ -- связное непустое множество в $D,$ содержащее не менее
двух точек, на котором $f$ постоянно, что противоречит дискретности
$f.$ Полученное противоречие указывает на равенство $c=b.$

\medskip
Предположим теперь, что кривая $\beta$ продолжается до замкнутой
кривой $\beta:[a, b]\rightarrow {\Bbb M}^n_*.$ Покажем, что кривая
$\alpha$ также продолжается до замкнутой кривой $\alpha:[a,
b]\rightarrow D,$ причём $f(\alpha(t))=\beta(t),$ $t\in [a, b].$
Рассуждая по аналогии и используя принятые выше обозначения, мы
получим, что $C(\alpha, b)$ -- континуум в $D,$ в точках которого
отображение $f$ принимает одно и то же значение. Последнее ввиду
дискретности $f$ возможно лишь в случае, когда $C(\alpha, b)$
одноточечное множество, т.е., кривая $\alpha$ продолжается по
непрерывности в точку $b.$ Равенство $f(\alpha(t))=\beta(t)$ при
$t\in [a, b)$ вытекает из определения максимального поднятия
$\alpha,$ а при $t=b$ это равенство имеет место ввиду непрерывности
отображения $f$ в $D.$
\end{proof}~$\Box$

\medskip
Основной результат настоящего раздела содержит в себе следующее
утверждение, доказанное для пространства ${\Bbb R}^n$ в работе
\cite{Sev$_2$} (см. также случай гомеоморфизмов евклидова
$n$-мерного пространства в себя, рассмотренный в работе \cite{KR}).

\medskip
\begin{theorem}\label{th3}
{\,\sl Предположим, ${\Bbb M}^n_*={\Bbb R}^n,$ области $D$ и
$D^{\,\prime}$ имеют компактные замыкания, область $D$ регулярна,
область $D^{\,\prime}\subset {\Bbb R}^n$ имеет локально
квазиконформную границу, кроме того, $\partial D^{\,\prime}$
является сильно достижимой относительно $\alpha$-модуля, $\alpha>1.$
Пусть также отображение $f:D\rightarrow D^{\,\prime},$
$D^{\,\prime}=f(D),$ принадлежащее классу $W_{loc}^{1, \varphi}(D),$
является открытым, дискретным и замкнутым. Тогда $f$ может быть
продолжено до непрерывного отображения $f:\overline{D}_P\rightarrow
\overline{D^{\,\prime}}_P,$
$f(\overline{D}_P)=\overline{D^{\,\prime}}_P,$ если выполнено
условие
\begin{equation}\label{eqOS3.0a}
\int\limits_{1}^{\infty}\left(\frac{t}{\varphi(t)}\right)^
{\frac{1}{n-2}}dt<\infty
\end{equation}
и, кроме того, найдётся измеримая по Лебегу функция $Q,$ такая что
$K_{I,\alpha} (x, f)\leqslant Q(x)$ при почти всех $x\in D,$ и
выполнено одно из следующих условий:

1) либо в каждой точке $x_0\in
\partial D$ при некотором $\varepsilon_0>0$ и всех $\varepsilon\in
(0, \varepsilon_0)$ выполнены следующие условия:
\begin{equation*}%\label{eq1F}
\int\limits_{\varepsilon}^{\varepsilon_0}
\frac{dt}{t^{\frac{n-1}{\alpha-1}}q_{x_0}^{\,\frac{1}{\alpha-1}}(t)}<\infty\,,\qquad
\int\limits_{0}^{\varepsilon_0}
\frac{dt}{t^{\frac{n-1}{\alpha-1}}q_{x_0}^{\,\frac{1}{\alpha-1}}(t)}=\infty\,;
\end{equation*}

2) либо $Q\in FMO(x_0)$ в каждой точке $x_0\in \partial D.$
}
\end{theorem}
Здесь $q_{x_0}(r):=\frac{1}{r^{n-1}}\int\limits_{S(x,
x_0)=r}Q(x)\,d{\mathcal A}$ обозначает среднее интегральное значение
функции $Q$ над геодезической сферой $S(x_0, r)$ ($d{\mathcal A}$
означает элемент площади поверхности на многообразии).

\medskip
Мы докажем теорему \ref{th3} несколько позже, после того, как будут
сформулирован и доказан ряд вспомогательных утверждений. Во-первых,
напомним определение нижнего $Q$-отображения, играющего существенную
роль при исследовании классов Орлича--Соболева (см.
\cite[гл.~9]{MRSY}). Пусть $n\geqslant 2,$ $D$ и $D^{\,\prime}$~---
заданные области в ${\Bbb M}^n$ и ${\Bbb M}_*^n,$ соответственно,
$x_0\in\overline{D}$ и $Q\colon D\to(0,\infty)$~--- измеримая
функция относительно меры объёма $v.$ Пусть $U$~--- нормальная
окрестность, содержащая точку $x_0,$ $p\geqslant 1,$ тогда будем
говорить, что $f\colon D\to D^{\,\prime}$~--- {\it нижнее $
Q$-отображение в точке $x_0$ относительно $p$-модуля}, как только
\begin{equation}\label{eq1A}
M_p(f(\Sigma_{\varepsilon}))\geqslant \inf\limits_{\rho\in{\rm
ext\,adm}\,\Sigma_{\varepsilon}}\int\limits_{D\cap A(x_0,
\varepsilon, \varepsilon_0)}\frac{\rho^p(x)}{Q(x)}\,dv(x)
\end{equation}
для каждого кольца $A(x_0, \varepsilon, \varepsilon_0),$
$\varepsilon_0\in(0,d_0),$ $d_0=\sup\limits_{x\in U}d(x, x_0),$
где $\Sigma_{\varepsilon}$ обозначает семейство всех пересечений
геодезических сфер $S(x_0, r)$ с областью $D,$ $r\in (\varepsilon,
\varepsilon_0).$

\medskip
Следующее утверждение является обобщением теоремы 9.2 в~\cite{MRSY}
на случай отображений, заданных на многообразии и произвольный
порядок модуля $p>n-1.$

\medskip
 \begin{proposition}\label{lem4A}
{\,\sl Пусть $n\geqslant 2,$ $p>n-1,$ $D$ и $D^{\,\prime}$~---
заданные области в ${\Bbb M}^n$ и ${\Bbb M}_*^n,$
соответственно{\em,} $x_0\in\overline{D}$ и $Q\colon
D\to(0,\infty)$~--- измеримая функция. Если отображение $f\colon
D\to D^{\,\prime}$ является нижним $Q$-отображением в точке $x_0$
относительно $p$-модуля, то при произвольном $\varepsilon_0>0,$
таком{\em,} что $\overline{B(x_0,\varepsilon_0)}$ лежит в нормальной
окрестности $U$ точки $x_0$ и некоторой постоянной $C_1>0$ имеем
\begin{equation}\label{eq15}
M_p(f(\Sigma_{\varepsilon}))\geqslant
C_1\int\limits_{\varepsilon}^{\varepsilon_0}
\frac{dr}{\Vert\,Q\Vert_{s}(r)}\quad\forall\
\varepsilon\in(0,\varepsilon_0)\,,\ \varepsilon_0\in(0,d_0)\,,
\end{equation}
где $s=(n-1)/(p-n+1)$ и{\em,} как и выше{\em,}
$\Sigma_{\varepsilon}$ обозначает семейство всех пересечений сфер
$S(x_0, r)$ с областью $D,$ $r\in (\varepsilon, \varepsilon_0),$
 \begin{gather*}
\Vert
Q\Vert_{s}(r)=\left(\int\limits_{D(x_0,r)}Q^{s}(x)\,d{\mathcal{A}}\right)^{1/s}\,-\,L_{s}\hbox{-норма
функции}\, Q\,\hbox{над пересечением}\\
D \cap S(x_0,r)=D(x_0,r)=\{x\in D\,|\, d(x, x_0)=r\}.
 \end{gather*}

\medskip
Обратно{\em,} если соотношение~\eqref{eq15} выполнено при некотором
$\varepsilon_0>0$ и некоторой постоянной $C_1>0,$ то $f$ является
нижним $C_2 Q$-отображением в точке $x_0,$ где $C_2>0$~--- также
некоторая постоянная.}
 \end{proposition}

\medskip
{\it Доказательство} аналогично доказательству \cite[лемма~4.2]{IS}.

\medskip
В дальнейшем нам понадобится следующее вспомогательное утверждение
(см.~\cite[лемма~4.2]{ARS}), которое при $\alpha\ne n$ может быть
доказано по аналогии.

\medskip
\begin{proposition}\label{pr1A}
{\,\sl Пусть $\alpha\geqslant 1,$ $x_0 \in {\Bbb M}^n,$
$0<r_1<r_2<{\rm dist}\,(x_0,\partial U),$ $U$~--- некоторая
нормальная окрестность точки $x_0,$ $Q\colon{\Bbb M}^n\to [0,
\infty]$ измеримая функция{\em,} локально интегрируемая относительно
меры $v$ в $U.$ Полагаем
\begin{equation*}
\eta_0(r)=\frac{1}{Ir^{\frac{n-1}{\alpha-1}}q_{x_0}^{\frac{1}{\alpha-1}}(r)}\,,
\end{equation*}
где $I:=I=I(x_0,r_1,r_2)=\int\limits_{r_1}^{r_2}\
\frac{dr}{r^{\frac{n-1}{\alpha-1}}q_{x_0}^{\frac{1}{\alpha-1}}(r)}$
и
$q_{x_0}(r):=\frac{1}{r^{n-1}}\int\limits_{|x-x_0|=r}Q(x)\,d{\mathcal
H}^{n-1}.$ Тогда при некоторых постоянных $C>0$ и $C_1>0$
$$\frac{C_1}{I^{\alpha-1}}\leqslant\int\limits_{A} Q(x)
\eta_0^{\alpha}(d(x, x_0))\ dv(x)\leqslant C\int\limits_{A} Q(x)
\eta^{\alpha}(d(x, x_0))\ dv(x)\,,
$$
$A=A(x_0, r_1, r_2),$ для любой измеримой по Лебегу функции
$\eta\colon(r_1,r_2)\to [0,\infty],$ такой{\em,} что
$\int\limits_{r_1}^{r_2}\eta(r)dr=1.$ }
\end{proposition}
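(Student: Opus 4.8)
The plan is to reduce the estimate to a purely one–dimensional variational inequality by passing to geodesic polar coordinates centred at $x_0$. First I would verify that $\eta_0$ itself belongs to the admissible class: by the very definition of $I$ one has $\int_{r_1}^{r_2}\eta_0(r)\,dr=\frac1I\int_{r_1}^{r_2}\frac{dr}{r^{(n-1)/(\alpha-1)}q_{x_0}^{1/(\alpha-1)}(r)}=1$. (Implicitly $\alpha>1$ here, since otherwise $\eta_0$ is not even defined; the borderline case $\alpha=1$ is degenerate and excluded.) One also records that $q_{x_0}$ is a measurable function of $r$, which follows from Tonelli's theorem together with the coarea description of the Riemannian volume.

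Next I would use that on the normal neighbourhood $U$ the function $x\mapsto d(x,x_0)$ is smooth on $U\setminus\{x_0\}$ and has gradient of unit length there, so that the coarea formula yields, for every nonnegative measurable $\eta$ on $(r_1,r_2)$,
$$\int\limits_{A}Q(x)\,\eta^{\alpha}(d(x,x_0))\,dv(x)=\int\limits_{r_1}^{r_2}\eta^{\alpha}(r)\Big(\int\limits_{S(x_0,r)}Q\,d{\mathcal H}^{n-1}\Big)dr=\int\limits_{r_1}^{r_2}\eta^{\alpha}(r)\,r^{\,n-1}q_{x_0}(r)\,dr\,,$$
the last equality being nothing but the definition of $q_{x_0}$. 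If one prefers not to invoke the coarea formula on ${\Bbb M}^n$, the same reduction holds up to two fixed positive constants $C_1\leqslant1\leqslant C$: it suffices to pull everything back to normal coordinates around $x_0$ and to use the comparison $c_1\,dm\leqslant dv\leqslant c_2\,dm$ of the Riemannian volume with the Euclidean one on the compact set $\overline{B(x_0,r_2)}$ — this is precisely the source of the constants in the statement, and with the coarea formula one may take $C_1=C=1$.

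Writing $\psi(r):=r^{\,n-1}q_{x_0}(r)$, I would then prove the classical one–dimensional fact that among all nonnegative measurable $\eta$ on $(r_1,r_2)$ with $\int_{r_1}^{r_2}\eta=1$ the integral $\int_{r_1}^{r_2}\eta^{\alpha}\psi\,dr$ attains its minimum $I^{1-\alpha}$ at $\eta=\eta_0$. For the lower bound I would apply H\"older's inequality with exponents $\alpha$ and $\alpha/(\alpha-1)$ to the factorisation $\eta=(\eta^{\alpha}\psi)^{1/\alpha}\cdot\psi^{-1/\alpha}$, getting $1=\int_{r_1}^{r_2}\eta\,dr\leqslant(\int_{r_1}^{r_2}\eta^{\alpha}\psi\,dr)^{1/\alpha}\,I^{(\alpha-1)/\alpha}$, i.e.\ $\int_{r_1}^{r_2}\eta^{\alpha}\psi\,dr\geqslant I^{1-\alpha}$; equality at $\eta_0$ is checked directly, since $\eta_0^{\alpha}(r)\,\psi(r)=I^{-\alpha}\,r^{-(n-1)/(\alpha-1)}q_{x_0}^{-1/(\alpha-1)}(r)$, whose integral over $(r_1,r_2)$ equals $I^{-\alpha}\cdot I=I^{1-\alpha}$. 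Combining this with the reduction above gives $\int_{A}Q\,\eta_0^{\alpha}(d(\cdot,x_0))\,dv=I^{1-\alpha}$ and $\int_{A}Q\,\eta_0^{\alpha}(d(\cdot,x_0))\,dv\leqslant\int_{A}Q\,\eta^{\alpha}(d(\cdot,x_0))\,dv$ for every admissible $\eta$, which is the asserted pair of inequalities (with $C_1=C=1$, or with the comparison constants if the reduction is carried out in normal coordinates). Alternatively, once the passage to geodesic coordinates is made, one may simply quote the Euclidean version \cite[Lemma~4.2]{ARS} word for word.

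The only genuine obstacle is this reduction step — justifying the identity relating the integral over $A\subset{\Bbb M}^n$ to the one–dimensional integral over $(r_1,r_2)$. This requires either the coarea/Fubini formula for the distance function on a normal neighbourhood (for which one uses the smoothness of $d(\cdot,x_0)$ and of the geodesic spheres $S(x_0,r)$ inside $U$, together with $|\nabla d(\cdot,x_0)|\equiv1$) or, equivalently, the two–sided boundedness of the metric density in normal coordinates on $\overline{B(x_0,r_2)}$; everything else is the classical H\"older minimisation and is routine.
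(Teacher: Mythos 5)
Your argument is correct and is essentially the proof the paper relies on: the paper gives no details for this proposition, merely citing \cite[Lemma~4.2]{ARS} with the remark that the case $\alpha\ne n$ is analogous, and your reduction to geodesic polar coordinates followed by the H\"older minimisation of $\int\eta^{\alpha}\psi\,dr$ under $\int\eta\,dr=1$ is exactly the standard argument behind that cited lemma (as you yourself note). The equality $\int_{r_1}^{r_2}\eta_0^{\alpha}\psi\,dr=I^{1-\alpha}$ and the lower bound $\int_{r_1}^{r_2}\eta^{\alpha}\psi\,dr\geqslant I^{1-\alpha}$ check out, and your observation that the constants $C,C_1$ arise from comparing $dv$ with the Euclidean volume in normal coordinates is the right accounting.
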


\medskip
Далее $C_p[G, C_{0}, C_{1}]$ означает $p$-ёмкость пары $C_{0},
C_{1}$ относительно замыкания $G$, а $\Sigma$ обозначает класс всех
множеств, разделяющих $C_{0}$ и $C_{1}$ в $R^{\,*}=R \cup C_{0}\cup
C_{1}$ (подробно об этих и других определениях см. в работе
\cite{IS$_1$}). Заметим, что согласно результату Цимера
\begin{equation}\label{eq3B}
\widetilde{M_{p^{\,\prime}}}(\Sigma)=C_p[G , C_{0} ,
C_{1}]^{\,-1/(p-1)}\,,
\end{equation}
см. \cite[теорема~3.13]{Zi} при $p=n$ и \cite[с.~50]{Zi$_1$} при
$1<p<\infty.$ Заметим также, что согласно результату В.А.~Шлык
 \begin{equation}\label{eq4A}
M_p(\Gamma(E, F, D))= C_p[D, E, F]\,,
 \end{equation}
см.~\cite[теорема~1]{Shl}.

\medskip
Всюду ниже пространство ${\Bbb R}^n$ интерпретируется как риманово
многообразие с единственной картой $\varphi(x)=x$ и $g_{ij}(x)\equiv
E,$ где $E$ -- единичная матрица в ${\Bbb R}^n.$ Установим теперь
справедливость следующей теоремы.

\medskip
\begin{theorem}\label{th2A}
{\,\sl Пусть $n\geqslant 2,$ $p>n-1,$ ${\Bbb M}^n_*={\Bbb R}^n,$
области $D$ и $D^{\,\prime}$ имеют компактные замыкания, область $D$
регулярна, область $D^{\,\prime}\subset {\Bbb R}^n$ имеет локально
квазиконформную границу, кроме того, $\partial D^{\,\prime}$
является сильно достижимой относительно $\alpha$-модуля, $\alpha>1.$
$\alpha:=\frac{p}{p-n+1}.$ Пусть также отображение $f:D\rightarrow
D^{\,\prime},$ $D^{\,\prime}=f(D),$ является нижним $Q$-отображением
в каждой точке $x_0\in \partial D$ относительно $p$-модуля, кроме
того, $f$ является открытым, дискретным и замкнутым. Тогда $f$
продолжается до непрерывного отображения
$f:\overline{D}_P\rightarrow \overline{D^{\,\prime}}_P,$
$f(\overline{D}_P)=\overline{D^{\,\prime}}_P,$ если выполнено одно
из следующих условий:

\medskip
1) либо в каждой точке $x_0\in \partial D$ при некотором
$\varepsilon_0=\varepsilon_0(x_0)>0$ и всех
$0<\varepsilon<\varepsilon_0$
\begin{equation}\label{eq10B}
\int\limits_{\varepsilon}^{\varepsilon_0}
\frac{dt}{t^{\frac{n-1}{\alpha-1}}\widetilde{q}_{x_0}^{\,\frac{1}{\alpha-1}}(t)}<\infty\,,\qquad
\int\limits_{0}^{\varepsilon_0}
\frac{dt}{t^{\frac{n-1}{\alpha-1}}\widetilde{q}_{x_0}^{\,\frac{1}{\alpha-1}}(t)}=\infty\,,
\end{equation}
где $\alpha=\frac{p}{p-n+1},$
$\widetilde{q}_{x_0}(r):=\frac{1}{r^{n-1}}\int\limits_{|x-x_0|=r}Q^{\frac{n-1}{p-n+1}}(x)\,d{\mathcal
A}$ обозначает среднее интегральное значение функции
$Q^{\frac{n-1}{p-n+1}}(x)$ над геодезической сферой $S(x_0, r);$

2) либо $Q^{\frac{n-1}{p-n+1}}\in FMO(\partial D).$ }
\end{theorem}

\begin{proof} Докажем вначале, что $f$ имеет непрерывное продолжение
$f:\overline{D}_P\rightarrow \overline{D^{\,\prime}}_P.$ Рассмотрим
прежде всего случай 1), т.е., когда имеют место соотношения
(\ref{eq10B}). Так как $D^{\,\prime}$ имеет локально квазиконформную
границу, то $\overline{D^{\,\prime}}_P=\overline{D^{\,\prime}}$
ввиду теоремы \ref{th1}. В силу метризуемости пространства
$\overline{D}_P$ достаточно доказать, что для каждого простого конца
$P$ области $D$ предельное множество
$$L=C(f, P):=\left\{y\in{{\Bbb
R}^n}:y=\lim\limits_{m\rightarrow\infty}f(x_m),x_m\rightarrow
P,x_m\in D\right\}$$ состоит из единственной точки $y_0\in\partial
D^{\,\prime}$.

Заметим, что $L\ne\varnothing$ в силу компактности множества
$\overline{D^{\,\prime}}$, и $L$ является подмножеством $\partial
D^{\,\prime}$ ввиду предложения \ref{pr4}. Предположим, что
существуют, по крайней мере, две точки $y_0$ и $z_0\in L$. Положим
$U=B(y_0,r_0)$, где $0<r_0<d_*(y_0, z_0)$.

В силу определения регулярной области и леммы \ref{thabc2} каждый
простой конец $P$ регулярной области $D$ в ${\Bbb M}^n$, $n\geqslant
2,$ содержит цепь разрезов $\sigma_m$, лежащую на сферах $S_m$ с
центром в некоторой точке $x_0\in\partial D$ и геодезическими
радиусами $r_m\rightarrow 0$ при $m\rightarrow\infty$. Пусть $D_m$
-- области, ассоциированные с разрезами $\sigma_m$, $m=1,2,\ldots$.
Тогда существуют точки $y_m$ и $z_m$ в областях
$D_{m}^{\,\prime}=f(D_{m})$, такие что $d_*(y_0, y_m)<r_0$ и
$d_*(y_0, z_m)>r_0$ и, кроме того, $d_*(y_m, y_0)\rightarrow 0$ и
$d_*(z_m, z_0)\rightarrow 0$ при $m\rightarrow\infty$.
Соответственно, найдутся $x_m$ и $x_m^{\,\prime}$ в области $D_m,$
такие что $f(x_m)=y_m$ и $f(x_m^{\,\prime})=z_m.$ Соединим точки
$x_m$ и $y_m$ кривой $\gamma_m,$ целиком лежащей в области $D_m.$
Пусть $C_m$ -- образ этой кривой при отображении $f$ в
$D^{\,\prime}.$ Заметим, что $\partial U\cap |C_m|\ne\varnothing$
ввиду \cite[теорема~1.I.5, $\S\,46$]{Ku} (как обычно, $|C_m|$
обозначает носитель кривой $C_m$).

В силу определения сильно достижимой границы относительно
$\alpha$-модуля существует компакт $E\subset D^{\,\prime}$ и число
$\delta>0$, такие, что
\begin{equation}\label{eq1B}
M_{\alpha}(\Gamma(E, |C_m|, D^{\,\prime}))\geqslant\delta
\end{equation}
для всех достаточно больших $m$.

Без ограничения общности можем считать, что последнее условие
выполнено для всех $m=1,2,\ldots$. Заметим, что $C=f^{-1}(E)$
является компактным подмножеством области $D$ ввиду замкнутости
отображения $f$ и предложения \ref{pr4}, поэтому, поскольку
$I(P)=\bigcap\limits_{m=1}\limits^{\infty}\overline{D_m}\subset
\partial D$ (см. предложение \ref{thabc1}), то не ограничивая общности рассуждений, можно считать, что
$C\cap\overline{D_m}=\varnothing$ для каждого $m\in {\Bbb N.}$
Положим $\delta_0:=d(x_0, C).$ Уменьшая $\varepsilon_0,$ если это
необходимо, можно считать, что $\varepsilon_0<\delta_0.$

Пусть $\Gamma_m$ -- семейство всех кривых в $D,$ соединяющих $C$ и
$\sigma_m$, $m=1,2,\ldots$. Заметим, что $\Gamma(|\gamma_m|, C,
D)>\Gamma_m$ ввиду \cite[теорема~1.I.5, $\S\,46$]{Ku}, так что
$f(\Gamma(|\gamma_m|, C, D))>f(\Gamma_m)$ и ввиду (\ref{eq32*A})
\begin{equation}\label{eq1D}
M_{\alpha}(f(\Gamma(|\gamma_m|, C, D)))\leqslant
M_{\alpha}(f(\Gamma_m))\,.
\end{equation}
Оценим $M_{\alpha}(f(\Gamma(|\gamma_m|, C, D)))$ в формуле
(\ref{eq1D}) снизу.
Пусть кривая $\beta:[0,1]\rightarrow D^{\,\prime}$ такова, что
$\beta(0)\in |C_m|$ и $\beta(1)=p\in E,$ где $p$ -- некоторый
фиксированный элемент множества $E.$ Тогда для кривой $\beta$ ввиду
предложения \ref{pr7} найдётся другая кривая $\gamma:[0,
1]\rightarrow D$ с началом в $|\gamma_m|,$ такая, что
$f\circ\gamma=\beta.$ Заметим, что по определению $\beta(1)\in E,$
так что $\gamma(1)\in C$ по определению множества $C.$ Значит,
$\gamma\in \Gamma(|\gamma_m|, C, D).$ Рассмотрим семейство
$\Gamma_m^*,$ состоящее из всех возможных таких кривых $\gamma,$
тогда $\Gamma_m^*\subset\Gamma(|\gamma_m|, C, D)$ и, одновременно,
$f(\Gamma_m^*)=\Gamma(E, |C_m|, D^{\,\prime}).$ Тогда
\begin{equation}\label{eq5B}
M_{\alpha}(\Gamma(E, |C_m|,
D^{\,\prime}))=M_{\alpha}(f(\Gamma^*_m))\leqslant
M_{\alpha}(f(\Gamma(|\gamma_m|, C, D)))\,.
\end{equation}
Из (\ref{eq1B}), (\ref{eq1D}) и (\ref{eq5B}) вытекает, что
\begin{equation}\label{eq7C}
M_{\alpha}(f(\Gamma_{m}))\geqslant\delta
\end{equation}
для всех $m=1,2,\ldots$. Заметим, что
$f(\Gamma_m)\subset\Gamma(f(\sigma_m), E, D^{\,\prime}),$ поэтому из
(\ref{eq7C}) вытекает, что
\begin{equation}\label{eq7B}
M_{\alpha}(\Gamma(f(\sigma_m), E,
D^{\,\prime}))\geqslant\delta\quad\forall\quad m=1,2,\ldots .
\end{equation}

\medskip
Оценим теперь величину $M_{\alpha}(\Gamma(f(\sigma_m), E,
D^{\,\prime}))$ сверху. Для этого подберём подходящим для нас
способом систему разделяющих множеств для $E$ и $f(\sigma_m)$ и
воспользуемся определением нижнего $Q$-отображения.

\medskip
Докажем, прежде всего, что множества $E$ и $\overline{f(B(x_0,
r)\cap D)}$ не пересекаются при любом $r\in (0, \varepsilon_0).$
Предположим противное, а именно, что найдётся $\zeta_0\in E\cap
\overline{f(B(x_0, r)\cap D)}.$ Тогда найдётся последовательность
$\zeta_k\in f(B(x_0, r)\cap D)$ такая, что
$\lim\limits_{k\rightarrow\infty} d_*(\zeta_k, \zeta_0)=0$ (где
$d_*$ -- евклидова метрика). В таком случае, $\zeta_k=f(\xi_k)$ при
некоторых значениях $\xi_k\in B(x_0, r)\cap D.$ Так как
$\overline{D}$ -- компакт, то из последовательности $\xi_k$ можно
выделить сходящуюся подпоследовательность
$\xi_{k_l}\stackrel{d}{\rightarrow} \xi_0\in \overline{B(x_0, r)\cap
D}.$ Случай $\xi_0\in \partial D$ невозможен, поскольку $f$ --
замкнутое отображение и, значит (ввиду предложения \ref{pr4}),
сохраняет границу: $C(f,
\partial D)\subset \partial f(D),$ но у нас $\zeta_0$ -- внутренняя
точка $D^{\,\prime}.$ Пусть $\xi_0$ -- внутренняя точка $D.$ По
непрерывности отображения $f$ имеем $f(\xi_0)=\zeta_0.$ Но тогда
одновременно $\xi_0\in B(x_0, \varepsilon_0)\cap D$ и $\xi_0\in
f^{\,-1}(E),$ что противоречит выбору $\varepsilon_0.$ Таким
образом, $E\cap \overline{f(B(x_0, r)\cap D)}=\varnothing$ и,
значит,
\begin{equation}\label{eq8B}
E\subset D^{\,\prime}\setminus \overline{f(B(x_0, r)\cap D)}\,, r\in
(0, \varepsilon_0)\,.
\end{equation}
Из (\ref{eq8B}), в частности, вытекает, что множества $E$ и
$f(\sigma_m)$ не пересекаются.

Заметим также, что при произвольном $r\in (r_m, \varepsilon_0)$
множество $A_r:=\partial (f(B(x_0, r)\cap D))\cap D^{\,\prime}$
отделяет $E$ и $f(\sigma_m)$ в $D^{\,\prime}.$ Действительно,
$$D^{\,\prime}=B_r\cup A_r\cup C_r\qquad\forall\quad
r\in (r_m, \varepsilon_0)\,,$$
где множества $B_r:=f(B(x_0, r)\cap D)$ и
$C_r:=D^{\,\prime}\setminus \overline{f(B(x_0, r)\cap D)}$ открыты в
$D^{\,\prime},$ $f(\sigma_m)\subset B_r,$ $E\subset C_r$ и $A_r$
замкнуто в $D^{\,\prime}.$

\medskip
Пусть $\Sigma_m$ -- семейство всех множеств, отделяющих
$f(\sigma_m)$ от $E$ в $D^{\,\prime}.$ Поскольку $f$ -- открытое
замкнутое отображение, мы получим, что
\begin{equation}\label{eq7A}
(\partial f(B(x_0, r)\cap D))\cap D^{\,\prime}\subset f(S(x_0,
r)\cap D), r>0.
\end{equation}
Действительно, пусть $\zeta_0\in (\partial f(B(x_0, r)\cap D))\cap
D^{\,\prime}.$ Тогда найдётся последовательность $\zeta_k\in
f(B(x_0, r)\cap D)$ такая, что $\zeta_k\stackrel{d_*}{\rightarrow}
\zeta_0$ при $k\rightarrow \infty,$ где $\zeta_k=f(\xi_k),$
$\xi_k\in B(x_0, r)\cap D$ и $d_*$ -- евклидова метрика. Не
ограничивая общности рассуждений, ввиду компактности $\overline{D},$
можно считать, что $\xi_k\stackrel{d}{\rightarrow} \xi_0$ при
$k\rightarrow\infty,$ где $d$ -- геодезическая метрика в ${\Bbb
M}^n.$ Заметим, что случай $\xi_0\in
\partial D$ невозможен, поскольку в этом случае $\zeta_0\in C(f,
\partial D),$ что противоречит замкнутости отображения $f.$ Тогда
$\xi_0\in D.$ Возможны две ситуации: 1) $\xi_0\in B(x_0 , r)\cap D$
и 2) $\xi_0\in S(x_0 , r)\cap D.$ Заметим, что случай 1) невозможен,
поскольку, в этом случае, $f(\xi_0)=\zeta_0$ и $\zeta_0$ --
внутренняя точка множества $f(B(x_0, r)\cap D),$ что противоречит
выбору $\zeta_0.$ Таким образом, включение (\ref{eq7A}) установлено.

\medskip
Здесь и далее объединения вида $\bigcup\limits_{r\in (r_1, r_2)}
\partial f(B(x_0, r)\cap D)\cap D^{\,\prime}$ понимаются как семейства множеств. Далее запись
$\rho\in \widetilde{\rm adm}\,\Sigma$ означает, что
$\rho$~--- неотрицательная борелевская функция в ${\Bbb R}^n$,
такая, что
\begin{equation} \label{eq13.4.13}
\int\limits_{\sigma \cap R}\rho d{\mathcal H}^{n-1} \geqslant
1\quad\forall\, \sigma \in \Sigma\,, \end{equation}
а запись $\rho\in{\rm adm}\,\Gamma$ означает, что борелевская
функция $\rho\colon{\Bbb M}^n\rightarrow\overline{{\Bbb R}^+}$
удовлетворяет условию
\begin{equation}\label{eq8.2.6}\int\limits_S\rho^k\,d{\mathcal{A}}=\int\limits_{{\Bbb
M}^n}\rho^k(y)\:N(S,y)\ d\mathcal{H}^ky\geqslant 1\end{equation} для
каждой поверхности $S\in\Gamma.$
Пусть $\rho^{n-1}\in \widetilde{{\rm adm}}\bigcup\limits_{r\in (r_m,
\varepsilon_0)}
\partial f(B(x_0, r)\cap D)\cap D^{\,\prime}$ в смысле соотношения (\ref{eq13.4.13}), тогда также
$\rho\in {\rm adm}\bigcup\limits_{r\in (r_m, \varepsilon_0)}
\partial f(B(x_0, r)\cap D)\cap D^{\,\prime}$ в смысле соотношения (\ref{eq8.2.6}) при
$k=n-1.$ Ввиду (\ref{eq7A}) мы получим, что $\rho\in {\rm
adm}\bigcup\limits_{r\in (r_m, \varepsilon_0)} f(S(x_0, r)\cap D)$
и, следовательно, так как $\widetilde{M}_q(\Sigma_m)\geqslant
M_{q(n-1)}(\Sigma_m)$ при произвольном $q\geqslant 1,$ то
$$\widetilde{M}_{p/(n-1)}(\Sigma_m)\geqslant$$
\begin{equation}\label{eq5A}\geqslant
\widetilde{M}_{p/(n-1)}\left(\bigcup\limits_{r\in (r_m,
\varepsilon_0)}
\partial f(B(x_0, r)\cap D)\cap D^{\,\prime}\right)\geqslant
\end{equation}
$$\geqslant\widetilde{M}_{p/(n-1)}
\left(\bigcup\limits_{r\in (r_m, \varepsilon_0)} f(S(x_0, r)\cap
D)\right)\geqslant M_p\left(\bigcup\limits_{r\in (r_m,
\varepsilon_0)} f(S(x_0, r)\cap D)\right)\,.$$

Однако, ввиду (\ref{eq3B}) и (\ref{eq4A}), учитывая, что $p>n-1,$
имеем
\begin{equation}\label{eq6A}
\widetilde{M}_{p/(n-1)}(\Sigma_m)=\frac{1}{(M_{\alpha}(\Gamma(f(\sigma_m),
E, D^{\,\prime})))^{1/(\alpha-1)}}\,.
\end{equation}
По предложению \ref{lem4A}
$$M_{p}\left(\bigcup\limits_{r\in (r_m, \varepsilon_0)} f(S(x_0,
r)\cap D)\right)\geqslant
$$
\begin{equation}\label{eq8A}
\geqslant C_1\cdot\int\limits_{r_m}^{\varepsilon_0}
\frac{dr}{\Vert\,Q\Vert_{s}(r)}=
C_1\cdot\int\limits_{r_m}^{\varepsilon_0}
\frac{dt}{t^{\frac{n-1}{\alpha-1}}\widetilde{q}_{x_0}^{\,\frac{1}{\alpha-1}}(t)}\quad\forall\,\,
m\in {\Bbb N}\,, s=\frac{n-1}{p-n+1}\,,\end{equation}
$\alpha=p/(p-n+1),$ где
$\Vert
Q\Vert_{s}(r)=\left(\int\limits_{D(x_0,r)}Q^{s}(x)\,d{\mathcal{A}}\right)^{\frac{1}{s}}$
-- $L_{s}$-норма функции $Q$ над сферой $D(x_0,r):=S(x_0,r)\cap D.$
Из условий (\ref{eq10B}) вытекает, что
$\int\limits_{r_m}^{\varepsilon_0}
\frac{dt}{t^{\frac{n-1}{\alpha-1}}\widetilde{q}_{x_0}^{\,\frac{1}{\alpha-1}}(t)}\rightarrow\infty$
при $m\rightarrow\infty.$

Из соотношений (\ref{eq5A}) и (\ref{eq8A}) следует, что
$\widetilde{M}_{p/(n-1)}(\Sigma_m)\rightarrow\infty$ при
$m\rightarrow\infty,$ однако, в таком случае, из (\ref{eq6A})
следует, что $M_{\alpha}(\Gamma(f(\sigma_m), E,
D^{\,\prime}))\rightarrow 0$ при $m\rightarrow\infty,$ что
противоречит неравенству (\ref{eq7B}). Полученное противоречие
опровергает предположение, что предельное множество $C(f, P)$
состоит более чем из одной точки.

\medskip
Рассмотрим теперь случай 2), а именно, пусть теперь $Q^s\in
FMO(\partial D),$ $s=(n-1)/(p-n+1).$ Покажем, что в этом случае
выполнено второе условие в (\ref{eq10B}). Для этой цели
воспользуемся предложением \ref{pr1A}. Полагаем
$0\,<\,\psi(t)\,=\,\frac
{1}{\left(t\,\log{\frac1t}\right)^{n/{\alpha}}}.$ Мы можем считать,
что $\varepsilon_0<1,$ где $\varepsilon_0$ -- положительное число из
условия теоремы. Тогда заметим, что при $t\in (r_m, \varepsilon_0)$
выполнено неравенство $\psi(t)\geqslant \frac
{1}{t\,\log{\frac1t}},$ так что
$I(r_m,
\varepsilon_0)\,:=\,\int\limits_{r_m}^{\varepsilon_0}\psi(t)\,dt\,\geqslant
\log{\frac{\log{\frac{1}{r_m}}}{\log{\frac{1}{r_0}}}}.$ Положим
$\eta(t):=\psi(t)/I(r_m, r_0).$ Тогда на основании
предложения~\ref{pr1A} и \cite[предложение~3]{Af$_2$} при некоторых
постоянных $C_2, C_3>0$
будем иметь, что %
\begin{equation}\label{eq1C}\frac{C_2}{\left(\int\limits_{r_m}^{\varepsilon_0}
\frac{dt}{t^{\frac{n-1}{\alpha-1}}\widetilde{q}_{x_0}^{\,\frac{1}{\alpha-1}}(t)}\right)^{\alpha-1}}
\leqslant\frac{1}{I^{\alpha}(r_m,
\varepsilon_0)}\int\limits_{r_m<d(x, x_0)<
\varepsilon_0}\frac{Q^{\frac{n-1}{p-n+1}}(x)\, dv(x)} {\left(d(x,
x_0) \log \frac{1}{d(x, x_0)}\right)^n} \leqslant\end{equation}
$$\leqslant
C_3\cdot\left(\log\log \frac{1}{r_m}\right)^{1-\alpha}\rightarrow
0$$
при  $m \to \infty.$ Отсюда получим, что интеграл слева
в~\eqref{eq1C} расходится при $m\rightarrow\infty.$ Снова из
предложения \ref{lem4A} непосредственно вытекает, что
$M_p^{\frac{n-1}{p-n+1}}(f(\Sigma_{r_m}))\to\infty$ при $m\to
\infty,$ однако, в таком случае, из (\ref{eq6A}) следует, что
$M_{\alpha}(\Gamma(f(\sigma_m), E, D^{\,\prime}))\rightarrow 0$ при
$m\rightarrow\infty,$ что противоречит неравенству (\ref{eq7B}).
Полученное противоречие опровергает предположение, что предельное
множество $C(f, P)$ состоит более чем из одной точки. Таким образом,
утверждение теоремы о возможности непрерывного продолжения
отображения до отображения $f:\overline{D}_P\rightarrow
\overline{D^{\,\prime}}_P$ в случае 2) также установлено.

\medskip
Для окончательного завершения доказательства необходимо показать,
что $f(\overline{D}_P)=\overline{D^{\,\prime}}.$ Очевидно,
$f(\overline{D}_P)\subset\overline{D^{\,\prime}}.$ Покажем обратное
включение. Пусть $\zeta_0\in \overline{D^{\,\prime}}.$ Если
$\zeta_0$ -- внутренняя точка области $D^{\,\prime},$ то, очевидно,
существует $\xi_0\in D$ так, что $f(\xi_0)=\zeta_0$ и, значит,
$\zeta_0\in f(D).$ Пусть теперь $\zeta_0\in
\partial D^{\,\prime},$ тогда найдётся последовательность $\zeta_m\in D^{\,\prime},$
$\zeta_m=f(\xi_m),$ $\xi_m\in D,$ такая, что
$\zeta_m\stackrel{d_*}{\rightarrow} \zeta_0$ при
$m\rightarrow\infty,$ где $d_*$ -- евклидова метрика. Поскольку
$\overline{D}_P$ -- компакт ввиду замечания \ref{rem2}, то можно
считать, что $\xi_m\rightarrow P_0,$ где $P_0$ -- некоторый простой
конец в $\overline{D}_P.$ Тогда также $\zeta_0\in
f(\overline{D}_P).$ Включение $\overline{D^{\,\prime}}\subset
f(\overline{D}_P)$ доказано и, значит,
$f(\overline{D}_P)=\overline{D^{\,\prime}}.$ Теорема
доказана.~$\Box$
\end{proof}

\medskip
Пусть $D$~--- подмножество ${\Bbb M}^n.$ Для отображения $f\colon
D\,\to\,{\Bbb M}_*^n,$ множества $E\subset D$ и $y\,\in\,{\Bbb
M}_*^n$  определим {\it функцию кратности $N(y, f, E)$} как число
прообразов точки $y$ в множестве $E,$ т.е.
\begin{equation}\label{eq1.7A}
N(y, f, E)\,=\,{\rm card}\,\left\{x\in E\,|\, f(x)=y\right\}\,,
\qquad N(f, E)\,=\,\sup\limits_{y\in{\Bbb M}_*^n}\,N(y, f, E)\,.
\end{equation}
Следующая лемма в ${\Bbb R}^n$ и для случая гомеоморфизмов
установлена Д.~Ковтонюком и В.~Рязановым
в~\cite[теорема~2.1]{KR$_1$}. Для случая римановых многообразий и
отображений с ветвлением данное утверждение установлено в
\cite[лемма~4.3]{IS$_1$}.

\medskip
 \begin{lemma}{}\label{thOS4.1}
{\sl Пусть $D$~--- область в ${\Bbb M}^n,$ $n\geqslant 3,$
$\varphi\colon(0,\infty)\to (0,\infty)$~--- неубывающая
функция{\em,} удовлетворяющая условию~\eqref{eqOS3.0a}.
Если $p>n-1,$ то каждое открытое дискретное отображение $f\colon
D\to {\Bbb M}_*^n$ с конечным искажением класса
$W^{1,\varphi}_{loc},$ такое{\em,} что $N(f, D)<\infty,$ является
нижним $Q$-отображением относительно $p$-модуля в каждой точке
$x_0\in\overline{D}$ при
 $$
Q(x)=N(f, D) K^{\frac{p-n+1}{n-1}}_{I, \alpha}(x, f),\quad
\alpha:=\frac{p}{p-n+1}\,,
 $$
где внутренняя дилатация $K_{I,\alpha}(x, f)$ отображения $f$ в
точке $x$ порядка $\alpha$ определена
соотношением~\eqref{eq0.1.1A}{\em,} а кратность $N(f, D)$ определена
вторым соотношением в~\eqref{eq1.7A}.}
 \end{lemma}

\medskip
{\it Доказательство теоремы \ref{th3}}. По лемме \ref{thOS4.1}
отображение $f$ в каждой точке $x_0\in D$ является нижним
$Q$-отображением относительно $p$-модуля в каждой точке
$x_0\in\overline{D}$ при $Q(x)=N(f, D)\cdot
K^{\frac{p-n+1}{n-1}}_{I, \alpha}(x, f),$ $\alpha:=\frac{p}{p-n+1}$
(т.е., $p=\frac{\alpha(n-1)}{\alpha-1}>n-1$), где внутренняя
дилатация $K_{I,\alpha}(x, f)$ отображения $f$ в точке $x$ порядка
$\alpha$ определена соотношением (\ref{eq0.1.1A}), а кратность $N(f,
D)$ определена вторым соотношением в (\ref{eq1.7A}). Тогда
необходимое заключение вытекает из теоремы \ref{th2}, а также того
почти очевидного факта, что максимальная кратность $N(f, D)$
замкнутого открытого дискретного отображения $f$ конечна (см.,
напр., \cite[теорема~5.5]{Va$_1$}).~$\Box$

\medskip
{\bf 5. Граничное поведение кольцевых $Q$-отображений относительно
$p$-модуля.} Как мы убедились в предыдущем разделе, ключевым для
исследования классов Орлича--Соболева в плане их граничного
продолжения было то, что такие отображения удовлетворяли некоторым
оценкам относительно модуля семейств поверхностей определённого
порядка (см. лемму \ref{thOS4.1}). Только после этого становится
возможным применение основной теоремы -- теоремы \ref{th2A} для этих
классов. В данном разделе нами также исследуются классы отображений,
удовлетворяющие оценкам уже несколько иного типа. Подчеркнём, что
этим оценкам (по крайней мере, в евклидовом $n$-мерном пространстве)
удовлетворяют почти все известные отображения, начиная от
отображений конформных, квазиконформных, квазирегулярных, и
заканчивая классами отображений конечного искажения, имеющих
довольно общую природу. По этому поводу укажем на публикации
\cite{MRSY}, \cite{MRV$_1$}--\cite{MRV$_2$}, \cite{Re}, \cite{Ri} и
\cite{Pol}.

Всюду ниже ${\Bbb M}^n$ и ${\Bbb M}^n_*$~--- римановы многообразия
размерности $n\geqslant 2$ с геодезическими расстояниями $d$ и
$d_*,$ соответственно, $D, D^{\,\prime}$ -- области, принадлежащие
${\Bbb M}^n$ и ${\Bbb M}_*^n,$ соответственно. Пусть также
$0<r_1<r_2<r_0,$
\begin{equation}\label{eq2I}
A=A(x_0, r_1, r_2)=\{x\in {\Bbb M}^n\,|\,r_1<d(x, x_0)<r_2\},
\end{equation}
$S_i=S(x_0,r_i),$ $i=1,2,$~--- геодезические сферы с центром в точке
$x_0$ и радиусов $r_1$ и $r_2,$ соответственно, а
$\Gamma\left(S_1,\,S_2,\,A\right)$ обозначает семейство всех кривых,
соединяющих $S_1$ и $S_2$ внутри области $A.$ Пусть $p\geqslant 1,$
$D$~--- область в ${\Bbb M}^n,$ $Q\colon {\Bbb M}^n\to [0,
\infty]$~--- измеримая по Лебегу функция, $Q(x)\equiv 0$ при всех
$x\not\in D.$ Отображение $f\colon D\to {\Bbb M}^n$ будем называть
{\it кольцевым $Q$-отоб\-ра\-же\-нием относительно $p$-модуля в
точке $x_0\in
\partial D,$} если для некоторого $r_0=r(x_0)>0,$ такого, что шар $B(x_0, r_0)$ лежит в некоторой
нормальной окрестности точки $x_0,$ произвольных <<сферического>>
кольца $A=A(x_0,r_1,r_2),$ центрированного в точке $x_0,$ радиусов
$r_1$ и $r_2,$ $0<r_1<r_2< r_0=r(x_0),$ и любых континуумов
$E_1\subset \overline{B(x_0, r_1)}\cap D,$ $E_2\subset {\Bbb
M}^n\setminus B(x_0, r_2)\cap D$ отображение $f$ удовлетворяет
соотношению
\begin{equation}\label{eq3*!!A}
M_p\left(f\left(\Gamma\left(E_1,\,E_2,\,D\right)\right)\right)
\leqslant \int\limits_{A} Q(x) \eta^p(d(x, x_0))\ dv(x)
 \end{equation}
для каждой измеримой функции $\eta\colon(r_1,r_2)\to [0,\infty],$
такой, что имеет место соотношение
\begin{equation}\label{eq*3!!}
\int\limits_{r_1}^{r_2}\eta(r)dr\geqslant 1\,.
 \end{equation}

Основной результат настоящего раздела содержится в следующем
утверждении.

\medskip
\begin{theorem}\label{th4}
{\,\sl Пусть $n\geqslant 2,$ области $D$ и $D^{\,\prime}$ имеют
компактные замыкания, $Q:{\Bbb M}^n\rightarrow[0, \infty],$
$Q(x)\equiv 0$ на ${\Bbb M}^n\setminus D,$ $p\geqslant 1,$ область
$D\subset {\Bbb M}^n$ регулярна, а $D^{\,\prime}\subset {\Bbb M}^n$
имеет локально квазиконформную границу, являющуюся сильно достижимой
относительно $p$-модуля. Пусть также отображение $f:D\rightarrow
D^{\,\prime},$ $D^{\,\prime}=f(D),$ является кольцевым
$Q$-отображением относительно $p$-модуля в каждой точке $x_0\in
\partial D,$ кроме того, $f$ является открытым, дискретным и
замкнутым. Тогда $f$ продолжается до непрерывного отображения
$f:\overline{D}_P\rightarrow \overline{D^{\,\prime}}_P,$
$f(\overline{D}_P)=\overline{D^{\,\prime}}_P,$ если выполнено одно
из следующих условий:

1) либо в каждой точке $x_0\in
\partial D$ при некотором $\varepsilon_0=\varepsilon_0(x_0)>0$ и
всех $0<\varepsilon<\varepsilon_0$
\begin{equation}\label{eq10A}
\int\limits_{\varepsilon}^{\varepsilon_0}
\frac{dt}{t^{\frac{n-1}{p-1}}q_{x_0}^{\,\frac{1}{p-1}}(t)}<\infty\,,\qquad
\int\limits_{0}^{\varepsilon_0}
\frac{dt}{t^{\frac{n-1}{p-1}}q_{x_0}^{\,\frac{1}{p-1}}(t)}=\infty\,,
\end{equation}
где $q_{x_0}(r):=\frac{1}{r^{n-1}}\int\limits_{S(x_0,
r)}Q(x)\,d{\mathcal A};$

2) либо $Q\in FMO(x_0)$ в каждой точке $x_0\in \partial
D.$ %где $\overline{Q}(x)\equiv Q(x)$ при $x\in D$ и $Q(x)\equiv 0$
%при $x\in {\Bbb R}^n\setminus D.$
}
\end{theorem}

\medskip
Как и в предыдущем разделе, для доказательства основного результата
-- теоремы \ref{th4} мы докажем сначала некое вспомогательное
утверждение, содержащее в себе утверждение указанной теоремы в
большей степени общности. Следующая лемма для случая гомеоморфизмов
на плоскости доказана в \cite[лемма~5.1]{GRY}. В нашем случае речь
идёт о ситуации римановых многообразий и отображений со значительно
более общими свойствами.

\medskip
\begin{lemma}\label{lem1A}
{\, Пусть $n\geqslant 2,$ $p\geqslant 1,$ область $D\subset {\Bbb
R}^n$ регулярна, а $D^{\,\prime}\subset {\Bbb R}^n$ ограничена и
имеет локально квазиконформную границу, являющуюся сильно достижимой
относительно $p$-модуля. Пусть также отображение $f:D\rightarrow
D^{\,\prime},$ $D^{\,\prime}=f(D),$ является кольцевым
$Q$-отображением относительно $p$-модуля во всех точках
$x_0\in\partial D,$ кроме того, $f$ является открытым, дискретным и
замкнутым. Тогда $f$ продолжается до непрерывного отображения
$f:\overline{D}_P\rightarrow \overline{D^{\,\prime}}_P,$
$f(\overline{D}_P)=\overline{D^{\,\prime}}_P,$ если найдётся
измеримая по Лебегу функция $\psi:(0, \infty)\rightarrow [0,
\infty]$ такая, что
\begin{equation} \label{eq5C}
I(\varepsilon,
\varepsilon_0):=\int\limits_{\varepsilon}^{\varepsilon_0}\psi(t)dt <
\infty
\end{equation}
при всех $\varepsilon\in (0,\varepsilon_0)$ и, кроме того,
$I(\varepsilon, \varepsilon_0)\rightarrow\infty$ при
$\varepsilon\rightarrow 0,$ и при $\varepsilon\rightarrow 0$
\begin{equation} \label{eq4*}
\int\limits_{\varepsilon<d(x_0,
x)<\varepsilon_0}Q(x)\cdot\psi^p(d(x_0, x)) \
dv(x)\,=\,o\left(I^p(\varepsilon, \varepsilon_0)\right)\,.
\end{equation}}
\end{lemma}

\begin{proof}
Так как $D^{\,\prime}$ имеет локально квазиконформную границу, то
$\overline{D^{\,\prime}}_P=\overline{D^{\,\prime}}$ ввиду теоремы
\ref{th1}. В силу метризуемости пространства $\overline{D}_P$
достаточно доказать, что для каждого простого конца $P$ области $D$
предельное множество
$$L=C(f, P):=\left\{y\in{{\Bbb
M}_*^n}:y=\lim\limits_{m\rightarrow\infty}f(x_m),x_m\rightarrow
P,x_m\in D\right\}$$ состоит из единственной точки $y_0\in\partial
D^{\,\prime}$.

Заметим, что $L\ne\varnothing$ в силу компактности множества
$\overline{D^{\,\prime}}$, и $L$ является подмножеством $\partial
D^{\,\prime}$ ввиду предложения \ref{pr4}. Предположим, что
существуют, по крайней мере, две точки $y_0$ и $z_0\in L.$ То есть,
найдётся не менее двух последовательностей $x_k, x_k^{\,\prime}\in
D,$ таких, что $x_k\rightarrow P$ и $x^{\,\prime}_k\rightarrow P$
при $k\rightarrow\infty,$ и при этом, $f(x_k)\rightarrow y_0$ и
$f(x^{\,\prime}_k)\rightarrow z_0$ при $k\rightarrow\infty.$ В силу
определения регулярной области и леммы \ref{thabc2} каждый простой
конец $P$ регулярной области $D$ в ${\Bbb M}^n$, $n\geqslant 2,$
содержит цепь разрезов $\sigma_m$, лежащую на сферах $S_m$ с центром
в некоторой точке $x_0\in\partial D$ и геодезическими радиусами
$r_m\rightarrow 0$ при $m\rightarrow\infty$. Пусть $D_k$ -- области,
ассоциированные с разрезами $\sigma_k$, $k=1,2,\ldots$. Не
ограничивая общности рассуждений, переходя к подпоследовательности,
если это необходимо, мы можем считать, что $x_k, x_k^{\,\prime}\in
D_k.$ В самом деле, так как последовательности $x_k$ и
$x_k^{\,\prime}$ сходятся к простому концу $P,$ найдётся номер
$k_1\in {\Bbb N}$ такой, что $x_{k_1}, x_{k_1}^{\,\prime}\in D_1.$
Далее, найдётся номер $k_2\in {\Bbb N},$ $k_2>k_1,$ такой, что
$x_{k_2}, x_{k_2}^{\,\prime}\in D_2.$ И так далее. Вообще, на $m$-м
шаге мы найдём номер $k_m\in {\Bbb N},$ $k_m>k_{m-1},$ такой, что
$x_{k_m}, x_{k_m}^{\,\prime}\in D_m.$ Продолжая этот процесс, мы
получим две последовательности $x_{k_m}$ и $x^{\,\prime}_{k_m},$
принадлежащие области $D_m,$ сходящиеся к $P$ при
$m\rightarrow\infty$ и такие, что $f(x_{k_m})\rightarrow y_0$ и
$f(x^{\,\prime}_{k_m})\rightarrow y_0$ при $m\rightarrow\infty.$
Переобозначая, если это необходимо, $x_{k_m}\mapsto x_m,$ мы
получаем последовательность $x_m$ с требуемыми свойствами.

По определению сильно достижимой границы в точке $y_0\in \partial
D^{\,\prime}$ относительно $p$-модуля, для любой окрестности $U$
этой точки найдутся компакт $C_0^{\,\prime}\subset D^{\,\prime},$
окрестность $V$ точки $y_0,$ $V\subset U,$ и число $\delta>0$ такие,
что
\begin{equation}\label{eq1E}
M_p(\Gamma(C_0^{\,\prime}, F, D^{\,\prime}))\ge \delta
>0
\end{equation} для произвольного континуума
$F,$ пересекающего $\partial U$ и $\partial V.$ Так как отображение
$f$ -- замкнутое, ввиду предложения \ref{pr4} для множества
$C_0:=f^{\,-1}(C_0^{\,\prime})$ выполнено условие $C_0\cap \partial
D=\varnothing.$ Поскольку
$I(P)=\bigcap\limits_{m=1}\limits^{\infty}\overline{D_m}\subset
\partial D$ (см. предложение~\ref{thabc1}), то не ограничивая общности рассуждений, можно считать, что
$C_0\cap\overline{D_k}=\varnothing$ для каждого $k\in {\Bbb N.}$
Соединим точки $x_k$ и $x_k^{\,\prime}$ кривой $\gamma_k,$ лежащей в
$D_k.$ Заметим, что $f(x_k)\in V$ и $f(x_k^{\,\prime})\in D\setminus
\overline{U}$ при всех достаточно больших $k\in {\Bbb N}.$ В таком
случае, найдётся номер $k_0\in {\Bbb N},$ такой, что согласно
(\ref{eq1E})
\begin{equation}\label{eq2A}
M_p(\Gamma(C_0^{\,\prime}, |f(\gamma_k)|, D^{\,\prime}))\ge \delta
>0
\end{equation}
при всех $k\ge k_0\in {\Bbb N}.$
При каждом фиксированном $k\in {\Bbb N},$ $k\ge k_0,$ рассмотрим
семейство $\Gamma_k^{\,\prime}$ (полных) поднятий $\alpha:[a,
b]\rightarrow D$ семейства $\Gamma\left(C_0^{\,\prime},
|f(\gamma_k)|, D^{\,\prime}\right)$ с началом во множестве
$|\gamma_k|,$ т.е., $f\circ\alpha=\beta,$ $\beta\in
\Gamma\left(C_0^{\,\prime}, |f(\gamma_k)|, D^{\,\prime}\right)$ и
$\alpha(a)\in |\gamma_k|.$ Заметим, что по определению
$\overline{\beta}(b)\in C_0^{\,\prime},$ так что
$\overline{\alpha}(b)\in C_0$ по определению множества $C_0.$
Значит, $\alpha\in \Gamma(|\gamma_k|, C_0, D).$ Погрузим компакт
$C_0$ в некоторый континуум $C_1,$ всё ещё полностью лежащий в
области $D$ (см.~\cite[лемма~1]{Smol}). Можно снова считать, что
$C_1\cap\overline{D_k}=\varnothing,$ $k=1,2,\ldots.$ Заметим, что
$\Gamma(|\gamma_k|, C_0, D)>\Gamma(\sigma_k, C_1, D),$ при этом,
$|\gamma_k|$ и $C_0$ -- континуумы в $D,$ а $\sigma_k$ -- разрез
соответствующий области $D_k.$ Поэтому к семейству кривых
$\Gamma(\sigma_k, C_1, D)$ можно применить определение кольцевого
$Q$-отображения (\ref{eq3*!!A}). Как уже было отмечено выше,
$\sigma_k\in S(x_0, r_k)$ для некоторой точки $x_0\in
\partial D$ и некоторой последовательности $r_k>0,$ $r_k\rightarrow
0$ при $k\rightarrow\infty.$ Здесь, не ограничивая общности
рассуждений, можно считать, что ${\rm dist}\,(x_0,
C_1)>\varepsilon_0.$ Кроме того, заметим, что функция
$$\eta_k(t)=\left\{
\begin{array}{rr}
\psi(t)/I(r_k, \varepsilon_0), &   t\in (r_k,
\varepsilon_0),\\
0,  &  t\in {\Bbb R}\setminus (r_k, \varepsilon_0)\,,
\end{array}
\right. $$
где $I(\varepsilon,
\varepsilon_0):=\int\limits_{\varepsilon}^{\varepsilon_0}\psi(t)dt,$
удовлетворяет условию нормировки вида (\ref{eq*3!!}). По доказанному
$\Gamma_k^{\,\prime}\subset\Gamma(|\gamma_k|, C_0, D),$ так что
$M_p(f(\Gamma_k^{\,\prime}))\leqslant M_p(f(\Gamma(|\gamma_k|, C_0,
D))).$ Поэтому, в силу определения кольцевого $Q$-отоб\-ра\-же\-ния
в граничной точке относительно $p$-модуля, а также ввиду условий
(\ref{eq5C})--(\ref{eq4*}),
\begin{equation}\label{eq11*}
M_p(f(\Gamma_k^{\,\prime}))=M_p(f(\Gamma(|\gamma_k|, C_0,
D)))\leqslant M_p(f(\Gamma(\sigma_k, C_1, D))\leqslant \Delta(k)\,,
\end{equation}
где $\Delta(k)\rightarrow 0$ при $k\rightarrow \infty.$ Однако,
$f(\Gamma_k^{\,\prime})=\Gamma(C_0^{\,\prime}, |f(\gamma_k)|,
D^{\,\prime}),$ поэтому из (\ref{eq11*}) получим, что при
$k\rightarrow \infty$
\begin{equation}\label{eq3E}
M_p(\Gamma(C_0^{\,\prime}, |f(\gamma_k)|, D^{\,\prime}))=
M_p\left(f(\Gamma_k^{\,\prime})\right)\leqslant \Delta(k)\rightarrow
0\,.
\end{equation}
Однако, соотношение (\ref{eq3E}) противоречит неравенству
(\ref{eq2A}), что и доказывает лемму.~$\Box$
\end{proof}

\medskip
{\it Доказательство теоремы \ref{th4}} сводится к лемме \ref{lem1A}
на основании подбора функций $\psi$ из этой леммы в подходящем для
нас виде (см. по этому поводу \cite[доказательство теорем~1.1 и
2.1]{IS$_1$}).~$\Box$

\medskip
{\bf 6. Гомеоморфное продолжение отображений на границу. Граничное
поведение обратных отображений}. Основным объектом настоящего
раздела является продолжение отображений $f:D\rightarrow
D^{\,\prime}$ до гомеоморфизма $f:\overline{D}_P\rightarrow
\overline{D^{\,\prime}}_P.$ Подчеркнём, что теоремы \ref{th2A} и
\ref{th4} не гарантируют, вообще говоря, такого продолжения.
Ситуация меняется в случае, если отображённая область $D^{\,\prime}$
имеет более <<хорошие>> свойства. Как будет показано ниже, в
качестве таких свойств достаточно взять область со слабо плоской
границей. (По этому поводу следует также упомянуть случай простых
концов, относящийся к пространству ${\Bbb R}^n,$ см. \cite[теоремы~3
и 4]{KR}. Кроме того, аналогичные результаты были получены в
терминах поточечного граничного продолжения на многообразиях и
метрических пространствах, см., напр., \cite[теорема~13.5]{MRSY},
\cite[теоремы~6.2 и 6.3]{ARS} и \cite[теорема~5]{Smol}).

\medskip
Следующие два утверждения доказаны в работе \cite{KR} в пространстве
${\Bbb R}^n$ для несколько иных классов отображений (см. лемму 4 и
теорему 1), а также на плоскости в случае тех же классов (см. лемму
6.1 и теорему 6.1 в \cite{GRY}). Доказательство этих утверждений не
содержит существенных отличий упомянутых случаев, однако, ради
полноты изложения их важно привести полностью.

\medskip
\begin{lemma}\label{l:9.1} {\sl Пусть $n\geqslant 2,$ $D\subset {\Bbb M}^n$ и $D^{\,\prime}\subset {\Bbb M}_*^n$ -- регулярные области
с компактными замыканиями,  $P_1$ и $P_2$ -- различные простые концы
области $D$, и пусть $\sigma_m$, $m=1,2,\ldots$, -- цепь разрезов
простого конца $P_1$ из леммы \ref{thabc2}, лежащая на сферах
$S(z_1,r_m)$, $z_1\in I(P_1)$, с ассоциированными областями $D_m$.
Предположим, что функция $Q:{\Bbb M}^n\rightarrow [0, \infty]$
интегрируема по сферам
\begin{equation*}\label{INTERSECTION}
D(r)=\left\{x\in D: d(x, z_1)=r\right\}=D\cap
S(z_1,r)\end{equation*}
для некоторого множества $E$ чисел $r\in(0,d)$ положительной
линейной меры, где $d=r_{m_0}$ и $m_0$ минимальное из чисел, таких,
что область $D_{m_0}$ не содержит последовательностей точек,
сходящихся к $P_2$. Если $f$ является кольцевым $Q$-гомеоморфизмом
области $D$ на область $D^{\,\prime}$ в точке $z_1$ и $\partial
D^{\,\prime}$ является слабо плоской (т.е., выполнено условие
(\ref{eq3})), то
\begin{equation*}\label{e:9.2}C(f, P_1)\cap C(f, P_2)=\varnothing.\end{equation*}}
\end{lemma}
Заметим, что в силу метризуемости пополнения $\overline{D}_P$
области $D$ простыми концами, см. сделанное выше замечание, число
$m_0$ в лемме \ref{l:9.1} всегда существует.

\medskip
\begin{proof} Выберем $\varepsilon\in(0,d)$ так, что $E_0:=\{r\in
E:r\in(\varepsilon,d)\}$ имеет положительную линейную меру. Такой
выбор возможен в силу полуаддитивности линейной меры и исчерпания
$E=\cup E_m$, где $E_m=\{r\in E:r\in(1/m,d)\}$, $m=1,2,\ldots$.
Полагая $S_1=S(z_1, \varepsilon),$ $S_2=S(z_1, d).$ Заметим, что
функция $\eta(t)=\frac{1}{Itq_{x_0}^{\frac{1}{n-1}}(t)},$
$I=\int\limits_{\varepsilon}^d\frac{dr}{rq_{x_0}^{\frac{1}{n-1}}(r)},$
удовлетворяет соотношению (\ref{eq*3!!}) при $r_1=\varepsilon$ и
$r_2=d,$ поэтому из определения кольцевого $Q$-отображения и ввиду
аналога теоремы Фубини на многообразиях (см.
\cite[соотношение~(2.15), замечание~2.1]{IS}) вытекает, что найдётся
постоянная $C>0$ такая, что
\begin{equation}\label{e:9.3}
M(f(\Gamma(S_1, S_2, D)))\leqslant
\frac{C}{I^{n-1}}<\infty\,.
\end{equation}
Предположим, что $C_1\cap C_2\ne\varnothing$, где $C_i=C(f, P_i)$,
$i=1,2$. По построению существует $m_1>m_0$, такое, что
$\sigma_{m_1}$ лежит на сфере $S(z_1,r_{m_1})$ с
$r_{m_1}<\varepsilon$. Пусть $D_0=D_{m_1}$ и $D_*\subseteq
D\setminus D_{m_0}$ -- область, ассоциированная с цепью разрезов
простого конца $P_2$. Пусть $y_0\in C_1\cap C_2$.

Заметим, что можно выбрать $r_0>0$ так, что $S(y_0,r_0)\cap
f(D_0)\ne\varnothing$ и $S(y_0,r_0)\cap f(D_*)\ne\varnothing$.
Действительно, так как $y_0\in C_1\cap C_2,$ то, в частности,
$y_0\in \overline{f(D_0)},$ откуда следует, что в произвольной
окрестности $U=B(y_0, r_1)$ точки $y_0$ имеется точка $x_1\in B(y_0,
r_1)\cap f(D_0).$ Точно также, $y_0\in \overline{f(D_*)}$ и, значит,
в этой же окрестности $U$ найдётся точка $x_2\in B(y_0, r_1)\cap
f(D_*).$ Пусть $r_0:=\min\{d_*(y_0, x_1), d_*(y_0, x_2)\},$ где
$d_*$ -- геодезическое расстояние в ${\Bbb M}^n_*.$ Заметим, что по
построению $f(D_0)\cap B(y_0, r_0)\ne\varnothing\ne f(D_0)\setminus
B(y_0, r_0)$ и $f(D_*)\cap B(y_0, r_0)\ne\varnothing\ne
f(D_*)\setminus B(y_0, r_0).$ Тогда ввиду \cite[теорема~1, гл.~5,
\S\, 46]{Ku} мы имеем, что $S(y_0, r_0)\cap f(D_*)\ne\varnothing\ne
S(y_0, r_0)\cap f(D_0),$ что и требовалось установить. Заметим, что
ввиду аналогичных соображений  $S(y_0, r_*)\cap
f(D_*)\ne\varnothing\ne S(y_0, r_*)\cap f(D_0)$ при любом $r_*\in
(0, r_0).$

Обозначим $\Gamma=\Gamma(\overline{D_0},\overline{D_*}, D)$. Тогда
по принципу минорирования и из (\ref{e:9.3}) следует, что
\begin{equation*}\label{e:9.4}
M(f(\Gamma))\leqslant M(f(\Gamma(S_1, S_2,
D)))<\infty\,.\end{equation*}
Пусть $M_0>M(f(\Gamma))$ -- конечное число. Из условия слабой
плоскости $\partial D^{\,\prime}$ следует, что найдется
$r_*\in(0,r_0)$, такое, что
$$M(\Gamma(E, F, D^{\,\prime}))\geqslant M_0$$
для всех континуумов $E$ и $F$ в $D^{\,\prime}$, пересекающих сферы
$S(y_0,r_0)$ и $S(y_0,r_*)$. Однако, эти сферы могут быть соединены
непрерывными кривыми $c_1$ и $c_2$ в областях $f(D_0)$ и $f(D_*)$ и,
в частности, для этих кривых
\begin{equation*}\label{e:9.4a}
M_0\leqslant M(\Gamma(c_1, c_2, D^{\,\prime}))\leqslant
M(f(\Gamma)).\end{equation*} Полученное противоречие опровергает
предположение, что $C_1\cap C_2\ne\varnothing$.~$\Box$ \end{proof}

\medskip
Из леммы \ref{lem1A} вытекают следующие два важнейшие утверждения
настоящего раздела.

\medskip
\begin{theorem}\label{t:9.5} {\sl Пусть $n\geqslant 2,$ $D\subset {\Bbb M}^n$ и $D^{\,\prime}\subset {\Bbb M}_*^n$ -- регулярные области
с компактными замыканиями.  Если $f$ -- кольцевой $Q$-гомеоморфизм
$D$ на $D^{\,\prime}$ в каждой точке $x_0\in \partial D$ и $Q\in
L(D)$, то $f^{\,-1}$ продолжается до непрерывного отображения
$\overline{D^{\,\prime}}_P$ на $\overline{D}_P$.}
\end{theorem}

\begin{proof} Ввиду аналога теоремы Фубини на многообразиях (см.
\cite[замечание~2.1]{IS}), множество
$$E(x_0)=\left\{r\in(0,d(x_0)):Q|_{D(x_0,r)}\in
L(D(x_0,r))\right\} \quad\forall\ x_0\in\partial D,$$
где $d(x_0)=\sup_{x\in D}d(x,x_0)$ и $D(x_0,r)=D\cap S(x_0,r)$,
имеет положительную линейную меру, поскольку $Q\in L(D)$. Согласно
сделанным во введении замечаниям,  без ограничения общности можем
считать, что область $D^{\,\prime}$ имеет слабо плоскую границу.
(Если это не так, то следует рассмотреть вспомогательное
квазиконформное отображение $g$ области $D^{\,\prime}$ на область
$D^{\,\prime\prime}$ с локально квазиконформной границей. Такое
отображение существует по определению регулярной области. В
частности, $D^{\,\prime\prime}$ имеет слабо плоскую границу по лемме
\ref{lem2}. Отображение $\varphi:=g\circ f$ будет тогда кольцевым
$K\cdot Q$-гомеоморфизмом в области $D,$ где $K\geqslant 1$ --
некоторая постоянная. Установив заключение теоремы для $\varphi,$ мы
установим тем самым и желанное заключение теоремы \ref{t:9.5},
используя соотношение $f^{\,-1}=\varphi^{\,-1}\circ g$ ).

Заметим, что для произвольного $\zeta_0\in \partial D^{\,\prime}$
множество $C(f^{\,-1}, \zeta_0)$ состоит из одной точки $\xi_0\in
E_D,$ где $E_D$ -- пространство простых концов области $D.$ В самом
деле, если мы имеем не менее двух последовательностей
$x_k\stackrel{d_*}{\rightarrow} \zeta_0$ при $k\rightarrow\infty$ и
$y_k \stackrel{d_*}{\rightarrow}\zeta_0$ при $k\rightarrow\infty$
таких, что $f^{\,-1}(x_k)\rightarrow P_1\in E_D$ и
$f^{\,-1}(y_k)\rightarrow P_2\in E_D$ при $k\rightarrow\infty,$
$P_1\ne P_2,$ то $\zeta_0\in C(f, P_1)\cap C(f, P_2),$ что
противоречит утверждению леммы \ref{l:9.1}.

Таким образом, мы имеем продолжение $f^{\,-1}$ на
$\overline{D^{\,\prime}}$ такое, что $C(f^{\,-1}, \partial
D^{\,\prime})\subset \overline{D}_P\setminus D.$ Покажем, что
$C(f^{\,-1}, \partial D^{\,\prime})=\overline{D}_P\setminus D.$
Действительно, если $P_0$ -- простой конец в $D,$ то  найдётся
последовательность $x_m\rightarrow P_0$ при $m\rightarrow \infty.$
Ввиду компактности $\overline{D}$ и $\overline{D^{\,\prime}}$ мы
можем считать, что $x_m\rightarrow x_0\in \partial D$ и
$f(x_m)\stackrel{d_*}{\rightarrow} \zeta_0\in
\partial D^{\,\prime}$ при $m\rightarrow\infty.$ Последнее означает,
что $P_0\in C(f^{\,-1}, \zeta_0).$

Покажем, наконец, что продолженное отображение
$g:\overline{D^{\,\prime}}\rightarrow \overline{D}_P$ непрерывно в
$\overline{D^{\,\prime}}.$ Действительно, пусть $\zeta_m\rightarrow
\zeta_0$ при $m\rightarrow\infty,$ $\zeta_m, \zeta_0\in
\overline{D^{\,\prime}}.$ Если $\zeta_0$ -- внутренняя точка области
$D^{\,\prime},$ желанное утверждение очевидно. Пусть $\zeta_0\in
\partial D^{\,\prime},$ тогда выберем $\zeta_m^*\in D^{\,\prime}$
таким, что $d_*(\zeta_m,\zeta_m^*)<1/m$ и $\rho(g(\zeta_m),
g(\zeta_m^*))<1/m,$ где $\rho$ -- одна из метрик, указанных в
замечании \ref{rem1}. По построению $g(\zeta_m^*)\rightarrow
g(\zeta_0),$ поскольку $\zeta_m^*\stackrel{d_*}{\rightarrow}
\zeta_0$ и, значит, также и $g(\zeta_m)\rightarrow g(\zeta_0)$ при
$m\rightarrow\infty.$~$\Box$
\end{proof}

\medskip
Имеет место также следующая

\medskip
\begin{theorem}\label{th4A}
{\,\sl Пусть $n\geqslant 2,$ области $D$ и $D^{\,\prime}$ регулярны
и имеют компактные замыкания, $Q:{\Bbb M}^n\rightarrow[0, \infty],$
$Q(x)\equiv 0$ на ${\Bbb M}^n\setminus D.$ Пусть также отображение
$f:D\rightarrow D^{\,\prime},$ $D^{\,\prime}=f(D),$ является
кольцевым $Q$-гомеоморфизмом в каждой точке $x_0\in
\partial D.$ Тогда $f$ продолжается до гомеоморфизма
$f:\overline{D}_P\rightarrow \overline{D^{\,\prime}}_P,$
$f(\overline{D}_P)=\overline{D^{\,\prime}}_P,$ если выполнено одно
из следующих условий:

1) либо в каждой точке $x_0\in
\partial D$ при некотором $\varepsilon_0=\varepsilon_0(x_0)>0$ и
всех $0<\varepsilon<\varepsilon_0$
\begin{equation}\label{eq10C}
\int\limits_{\varepsilon}^{\varepsilon_0}
\frac{dt}{tq_{x_0}^{\,\frac{1}{n-1}}(t)}<\infty\,,\qquad
\int\limits_{0}^{\varepsilon_0}
\frac{dt}{tq_{x_0}^{\,\frac{1}{n-1}}(t)}=\infty\,,
\end{equation}
где $q_{x_0}(r):=\frac{1}{r^{n-1}}\int\limits_{S(x_0,
r)}Q(x)\,d{\mathcal A};$

2) либо $Q\in FMO(x_0)$ в каждой точке $x_0\in \partial
D.$ %где $\overline{Q}(x)\equiv Q(x)$ при $x\in D$ и $Q(x)\equiv 0$
%при $x\in {\Bbb R}^n\setminus D.$
}
\end{theorem}

\medskip
\begin{proof}
Заметим, что функция $Q$ удовлетворяет условиям, сформулированным в
лемме \ref{l:9.1}. В самом деле, в случае 1) условие (\ref{eq10C})
гарантирует интегрируемость функции $Q$ на некотором множестве сфер
$S(x_0, r)$ положительной линейной меры относительно $r,$ а
выполнение указанного условия (более того, локальная интегрируемость
функции $Q$) в случае 2) вытекает непосредственно из определения
класса $FMO.$ В таком случае, возможность непрерывного продолжения
$f:\overline{D}_P\rightarrow \overline{D^{\,\prime}}_P$ вытекает из
леммы \ref{th4}, а гомеоморфность указанного продолжения есть
результат леммы \ref{l:9.1}.~$\Box$
\end{proof}

\medskip
{\bf 7. Связь верхних и нижних модульных оценок в граничных точках.}
В предыдущих разделах мы сталкивались с двумя видами оценок модулей
семейств кривых: независимо друг от друга рассматривались оценки
вида (\ref{eq1A}) и (\ref{eq3*!!A}). Вопросы взаимосвязи этих оценок
неоднократно рассматривались ранее (см., напр.,
\cite[предложение~3]{KR} либо \cite[теорема~12.3]{KSS}). В наиболее
общей ситуации, когда порядок модуля семейств кривых --
произвольный, результат об указанной связи может быть сформулирован
следующим образом.

\medskip
\begin{theorem}\label{th4AB}
{\,\sl Пусть $D\subset {\Bbb M}^n,$ $D^{\,\prime}\subset {\Bbb
R}^n,$ $n\geqslant 2,$ область $D^{\,\prime}$ ограничена, $x_0\in
\partial D,$ отображение $f:D\rightarrow D^{\,\prime}$
является нижним $Q$-гомеоморфизмом относительно $p$-модуля в области
$D,$ $Q\in L_{loc}^{\frac{n-1}{p-n+1}}({\Bbb M}^n),$ $Q(x)\equiv 0$
на ${\Bbb M}^n\setminus D,$ $p>n-1$ и $\alpha:=\frac{p}{p-n+1}.$
Тогда $f$ является кольцевым $C\cdot
Q^{\frac{n-1}{p-n+1}}$-гомеоморфизмом в этой же точке, где $C>0$ --
некоторая постоянная.
 }
\end{theorem}

\medskip
\begin{proof} Зафиксируем $\varepsilon_0\in(0,d_0),$ $d_0=\sup\limits_{x\in
D}d(x, x_0).$ Пусть $\varepsilon\in(0, \varepsilon_0)$ и пусть
континуумы $C_1$ и $C_2$ удовлетворяют условиям  $C_1\subset
\overline{B(x_0, \varepsilon)}\cap D$ и $C_2\subset D\setminus
B(x_0, \varepsilon_0).$ Рассмотрим семейство множеств
$\Gamma_{\varepsilon}:=\bigcup\limits_{r\in (\varepsilon,
\varepsilon_0)}\{f(S(x_0, r)\cap D)\}.$ Заметим, что множество
$\sigma_r:=f(S(x_0, r)\cap D)$ замкнуто в $f(D)$ как гомеоморфный
образ замкнутого множества $S(x_0, r)\cap D$ в $D.$ Кроме того,
заметим, что $\sigma_r$ при $r\in (\varepsilon, \varepsilon_0)$
отделяет $f(C_1)$ от $f(C_2)$ в $f(D),$ поскольку
$$f(C_1)\subset f(B(x_0, r)\cap D):=A,\quad
f(C_2)\subset f(D)\setminus \overline{f(B(x_0, r)\cap D)}:=B\,,$$
$A$ и $B$ открыты в $f(D)$ и
$$f(D)=A\cup \sigma_r\cup B\,.$$

\medskip
Пусть $\Sigma_{\varepsilon}$ -- семейство всех множеств, отделяющих
$f(C_1)$ от $f(C_2)$ в $f(D).$ Пусть $\rho^{n-1}\in \widetilde{{\rm
adm}}\bigcup\limits_{r\in (\varepsilon, \varepsilon_0)} f(S(x_0,
r)\cap D)$ в смысле соотношения (\ref{eq13.4.13}), тогда также
$\rho\in {\rm adm}\bigcup\limits_{r\in (\varepsilon, \varepsilon_0)}
f(S(x_0, r)\cap D)$ в смысле соотношения (\ref{eq8.2.6}) при
$k=n-1.$ Следовательно, так как
$\widetilde{M}_{q}(\Sigma_{\varepsilon})\geqslant
M_{q(n-1)}(\Sigma_{\varepsilon})$ при произвольном $q\geqslant 1,$
то
$$\widetilde{M}_{p/(n-1)}(\Sigma_{\varepsilon})\geqslant$$
\begin{equation}\label{eq5AA}
\geqslant \widetilde{M}_{p/(n-1)}\left(\bigcup \limits_{r\in
(\varepsilon, \varepsilon_0)} f(S(x_0, r)\cap D)\right)\geqslant
M_p\left(\bigcup\limits_{r\in (\varepsilon, \varepsilon_0)} f(S(x_0,
r)\cap D)\right)\,.
\end{equation}
Однако, ввиду (\ref{eq3}) и (\ref{eq4}),
\begin{equation}\label{eq1F}
\widetilde{M}_{p/(n-1)}(\Sigma_{\varepsilon})=\frac{1}{(M_{\alpha}(\Gamma(f(C_1),
f(C_2), f(D))))^{1/(\alpha-1)}}\,, \alpha=p/(p-n+1)\,.
\end{equation}
По предложению \ref{lem4A}
$$M_p\left(\bigcup\limits_{r\in (\varepsilon, \varepsilon_0)} f(S(x_0,
r)\cap D)\right)\geqslant
$$
\begin{equation}\label{eq8BA}
\geqslant C_1\cdot\int\limits_{\varepsilon}^{\varepsilon_0}
\frac{dr}{\Vert\,Q\Vert_{s}(r)}=
C_1\cdot\int\limits_{\varepsilon}^{\varepsilon_0} \frac{dt}{
t^{\frac{n-1}{\alpha-1}}\widetilde{q}_{x_0}^{\,\frac{1}{\alpha-1}}(t)}\quad\forall\,\,
i\in {\Bbb N}\,, s=\frac{n-1}{p-n+1}\,,\end{equation} где
$\Vert
Q\Vert_{s}(r)=\left(\int\limits_{D(x_0,r)}Q^{s}(x)\,d{\mathcal{A}}\right)^{\frac{1}{s}}$
-- $L_{s}$-норма функции $Q$ над сферой $S(x_0,r)\cap D,$ а
$\widetilde{q}_{x_0}(r)$ -- её среднее значение над этой сферой.
Тогда из (\ref{eq5AA})--(\ref{eq8BA}) вытекает, что
\begin{equation}\label{eq9C}
M_{\alpha}(\Gamma(f(C_1), f(C_2), f(D)))\leqslant
\frac{C_2}{I^{\alpha-1}}\,,
\end{equation}
где $C_2>0$ -- некоторая постоянная,
$I=\int\limits_{\varepsilon}^{\varepsilon_0}\
\frac{dr}{r^{\frac{n-1}{\alpha-1}}\widetilde{q}_{x_0}^{\frac{1}{\alpha-1}}(r)}.$
Заметим, что $f(\Gamma(C_1,C_2, D))=\Gamma(f(C_1), f(C_2), f(D)),$
так что из (\ref{eq9C}) вытекает, что
$$
M_{\alpha}(f(\Gamma(C_1,C_2, D)))\leqslant
\frac{C_2}{I^{\alpha-1}}\,.
$$
Завершает доказательство применение предложения \ref{pr1A}.~$\Box$
\end{proof}

\medskip
{\bf 8. Равностепенная непрерывность семейств отображений в
замыкании области}. В этом разделе получены некоторые приложения
теорем о граничном продолжении отображений, доказанных выше. Речь
идёт, прежде всего, о кольцевых $Q$-отображениях относительно
$p$-модуля. Ниже будет показано, что семейства таких отображений
равностепенно непрерывны $\overline{D}_P$ при не очень сильных
ограничениях на многообразия и рассматриваемые области их
определения. Для простоты ограничимся случаем, когда все отображения
рассматриваемого семейства являются гомеоморфизмами.

\medskip
Семейство $\frak{F}$ отображений $f\colon X\rightarrow
{X}^{\,\prime}$ называется {\it равностепенно непрерывным в точке}
$x_0 \in X,$ если для любого $\varepsilon>0$ найдётся $\delta>0$
такое, что ${d}^{\,\prime} \left(f(x),f(x_0)\right)<\varepsilon$ для
всех таких $x,$ что $d(x,x_0)<\delta$ и для всех $f\in \frak{F}.$
Говорят, что $\frak{F}$ {\it равностепенно непрерывно}, если
$\frak{F}$ равностепенно непрерывно в каждой  точке $x_0\in X.$
Согласно одной из версий теоремы Арцела--Асколи (см., напр.,
\cite[пункт~20.4]{Va}), если $\left(X,\,d\right)$ --- сепарабельное
метрическое пространство, а $\left(X^{\,\prime},\,
d^{\,\prime}\right)$ --- компактное метрическое пространство, то
семейство $\frak{F}$ отображений $f\colon X\rightarrow
{X}^{\,\prime}$ нормально тогда  и только тогда, когда  $\frak{F}$
равностепенно непрерывно.

\medskip
Понятия регулярности области по Альфорсу, а также сведения,
относящиеся к $(1; p)$-неравенству Пуанкаре, могут быть найдены,
напр., в \cite{AS} (см. также нашу предыдущую публикацию \cite{IS}).
Перед тем, как переходить к изложению основных результатов,
сформулируем и докажем следующее простое утверждение.

\medskip
\begin{proposition}\label{pr2A}
{\sl Пусть $(X, d)$ -- компактное метрическое пространство, и пусть
$x_0\in X$ -- произвольная фиксированная точка. Тогда существует
$R>0:$ $X=B(x_0, R).$}
\end{proposition}

\medskip
\begin{proof}
Предположим противное, а именно, что для каждого $k\in {\Bbb N}$
найдётся элемент $x_k\in X:$ $d(x_0, x_k)>k.$ Так как по условию
пространство $(X, d)$ является компактным, существует
подпоследовательность $x_{k_l}$ последовательности $x_k,$ сходящаяся
к некоторой точке $y_0\in X$ при $l\rightarrow\infty.$ Тогда по
неравенству треугольника $d(x_0, x_{k_l})\leqslant d(x_0,
y_0)+d(y_0, x_{k_l})<d(x_0, y_0)+1$ при всех $l\geqslant l_0$ и
некотором $l_0\in {\Bbb N}.$ Полученное соотношение противоречит
неравенству $d(x_0, x_k)>k,$ $k=1,2,\ldots ,$ что указывает на
неверность исходного предположения. Значит, $B(x_0, R)=X$ при
некотором достаточно большом $R>0.$~$\Box$
\end{proof}

\medskip
Справедливо следующее утверждение (см.~\cite[предложение~4.7]{AS}).

\medskip
\begin{proposition}\label{pr2}
{\sl Пусть $X$ --- $\beta$-регулярное по Альфорсу метрическое
пространство с мерой, в котором выполняется $(1;
\alpha)$-неравенство Пуанкаре, $\beta-1< \alpha\leqslant \beta.$
Тогда для произвольных континуумов $E$ и $F,$ содержащихся в шаре
$B(x_0, R),$ и некоторой постоянной $C>0$ выполняется неравенство
$$M_{\alpha}(\Gamma(E, F, X))\geqslant
\frac{1}{C}\cdot\frac{\min\{{\rm diam}\,E, {\rm
diam}\,F\}}{R^{1+\alpha-\beta}}\,.$$ }
\end{proposition}

\medskip
Как и ранее, положим
$A=A(x_0, r_1, r_2)=\{x\in {\Bbb M}^n\,|\,r_1<d(x, x_0)<r_2\}.$
Имеет место следующее утверждение, обобщающее
\cite[лемма~3.1]{Sev$_3$} в случае не локально связных границ. (Для
пространства ${\Bbb R}^n$ и не только локально связных границ это
утверждение также было получено ранее, см. \cite{SP}).

\medskip
\begin{lemma}\label{lem3B}
{\sl\, Пусть $\alpha\in (n-1, n],$ области $D\subset {\Bbb M}^n$ и
$D^{\,\prime}\subset {\Bbb M}_*^n$ имеют компактные замыкания, кроме
того, область $D$ регулярна и содержит не менее двух различных
простых концов, $D^{\,\prime}$ имеет локально квазиконформную
границу и, одновременно, является пространством $n$-регулярным по
Альфорсу относительно геодезической метрики $d_*$ и меры объёма
$v_*$ в ${\Bbb M}_*^n,$ в котором выполнено $(1;
\alpha)$-неравенство Пуанкаре. Пусть также $P_0$ -- некоторый
простой конец в $E_D,$ а $\sigma_m,$ $m=1,2,\ldots,$ --
соответствующая ему цепь разрезов, лежащих на сферах с центром в
некоторой точке $x_0\in
\partial D$ и радиусов $r_m\rightarrow 0,$ $m\rightarrow\infty.$
Пусть $D_m$ -- соответствующая $P_0$ последовательность
ассоциированных областей, а $C_m$ -- произвольная последовательность
континуумов, принадлежащих $D_m.$ Потребуем, кроме того, чтобы $C(f,
P_1)\cap C(f, P_2)=\varnothing$ для произвольных различных простых
концов $P_1, P_2\in E_D.$

Предположим, $f:D\rightarrow D^{\,\prime}$ -- кольцевой
$Q$-гомеоморфизм относительно $\alpha$-модуля в $\overline{D},$
$f(D)=D^{\,\prime},$ такой что $b_0^{\,\prime}=f(b_0)$ для некоторых
$b_0\in D$ и $b_0^{\,\prime}\in D^{\,\prime}.$ Пусть также найдётся
$\varepsilon_0=\varepsilon(x_0)>0,$
такое, что при некотором $0<p^{\,\prime}<\alpha$ выполнено условие
\begin{equation}\label{eq5***}
\int\limits_{A(x_0, \varepsilon, \varepsilon_0)}
Q(x)\cdot\psi^{\,\alpha}(d(x, x_0))\,dv(x) \leqslant K\cdot
I^{p^{\,\prime}}(\varepsilon, \varepsilon_0)\,,
\end{equation}
где $\psi$ -- некоторая неотрицательная измеримая функция, такая,
что при всех $\varepsilon\in(0, \varepsilon_0)$
\begin{equation}\label{eq7E}
I(\varepsilon,
\varepsilon_0):=\int\limits_{\varepsilon}^{\varepsilon_0}\psi(t)\,dt
< \infty\,,
\end{equation}
при этом, $I(\varepsilon, \varepsilon_0)\rightarrow \infty$ при
$\varepsilon\rightarrow 0.$

Тогда найдутся число $R>0,$ зависящее только от области
$D^{\,\prime}$ и числа
$\widetilde{\varepsilon_0}=\widetilde{\varepsilon_0}(x_0)\in (0,
\varepsilon_0),$ $M_0\in {\Bbb N}$ такие, что
$$d_*(f(C_m))\leqslant C\cdot R^{1+\alpha-n}\cdot K\cdot
I^{p^{\,\prime}-\alpha}(r_m, \varepsilon_0)\cdot \Delta(\sigma,
\widetilde{\varepsilon_0}, \varepsilon_0)\,,\quad m\geqslant M_1\,,
$$
где
\begin{equation}\label{eq1.3}
\Delta(\sigma, \widetilde{\varepsilon_0}, \varepsilon_0)=\left(
1+\frac{\int\limits_{\widetilde{\varepsilon_0}}^{\varepsilon_0}\psi(t)\,dt}
{\int\limits_{\sigma}^{\widetilde{\varepsilon_0}}\psi(t)\,dt}\right)^{\alpha}\,,\end{equation}
а $C$ -- постоянная из предложения \ref{pr2}.}
\end{lemma}

\medskip
\begin{proof} Ввиду теоремы \ref{th2} мы можем считать, что $\overline{D^{\,\prime}}_P=\overline{D^{\,\prime}}.$
Заметим, что ввиду предложения \ref{pr2} граница области
$D^{\,\prime}$ является сильно достижимой относительно
$\alpha$-модуля, так что по лемме \ref{lem1A} и ввиду условия $C(f,
P_1)\cap C(f, P_2)=\varnothing,$ $P_1, P_2\in E_D,$ отображение $f$
продолжается до гомеоморфизма $f:\overline{D}_P\rightarrow
\overline{D^{\,\prime}}.$ Тогда учитывая, что по условию леммы
\ref{lem3B} найдутся не менее двух простых концов $P_1, P_2\in E_D,$
мы заключаем, что граница области $D^{\,\prime}$ содержит не менее
двух точек. В частности, $\partial D^{\,\prime}\ne\varnothing.$
Далее, поскольку $\overline{D^{\,\prime}}$ -- компакт, по
предложению \ref{pr2A} существуют точка $\overline{x_0}\in
D^{\,\prime}$ и число $R=R(D^{\,\prime}):$
$\overline{D^{\,\prime}}\subset B(\overline{x_0}, R).$

Пусть теперь $P_1\in E_D$ -- простой конец, не совпадающий с $P_0,$
где $P_0$ -- фиксированный простой конец из условия леммы. (Такой
простой конец существует ввиду условия леммы~\ref{lem3B}).
Предположим, $G_m,$ $m=1,2,\ldots,$ -- последовательность областей,
соответствующая простому концу $P_1$ и $x_m\in G$ -- произвольная
последовательность точек, такая что $x_m\rightarrow P_1$ при
$m\rightarrow\infty.$ Можно считать, что $x_m\in G_m$ для всякого
$m\in {\Bbb N}.$ Тогда, так как $f$ имеет непрерывное продолжение на
$\overline{D}_P$ ввиду леммы \ref{lem1A}, то $f(x_m)\rightarrow
f(P_1)$ при $m\rightarrow\infty.$ Заметим, что при всех $m\geqslant
m_0$ и некотором $m_0\in {\Bbb N}$ ввиду неравенства треугольника,
применённого по отношению к метрике $d_*,$
$$d_*(f(b_0), f(x_m))=d_*(b_0^{\,\prime}, f(x_m))\geqslant$$
\begin{equation}\label{eq3C}
\geqslant d_*(b_0^{\,\prime}, f(P_1))-d_*(f(x_m), f(P_1))\geqslant
\frac{1}{2}\cdot d_*(b_0^{\,\prime}, \partial
D^{\,\prime}):=\delta\,,
\end{equation}
где $d_*(b_0^{\,\prime}, \partial D^{\,\prime})$ обозначает
геодезическое расстояние от точки $b_0^{\,\prime}$ до $\partial
D^{\,\prime}.$ (В частности, здесь мы использовали, что $\partial
D^{\,\prime}\ne\varnothing.$). Построим последовательность
континуумов $K_m,$ $m=1,2,\ldots,$ следующим образом. Соединим точку
$x_1$ с точкой $b_0$ произвольной кривой в $D,$ которую мы обозначим
через $K_1.$ Далее, соединим точки $x_2$ и $x_1$ кривой
$K_1^{\prime},$ лежащей в $G_1.$ Объединив кривые $K_1$ и
$K_1^{\prime},$ получим кривую $K_2,$ соединяющую точки $b_0$ и
$x_2.$ И так далее. Пусть на некотором шаге мы имеем кривую $K_m,$
соединяющую точки $x_m$ и $b_0.$ Соединим точки $x_{m+1}$ и $x_m$
кривой $K_m^{\,\prime},$ лежащей в $G_m.$ Объединяя между собой
кривые $K_m$ и $K_m^{\,\prime},$ получим кривую $K_{m+1}.$ И так
далее.

Покажем, что найдётся номер $m_1\in {\Bbb N},$ такой что
\begin{equation}\label{eq4B}
D_m\cap K_m=\varnothing\quad\forall\quad m\geqslant m_1\,.
\end{equation}
Предположим, что (\ref{eq4B}) не имеет места, тогда найдутся
возрастающая последовательность номеров $m_k\rightarrow\infty,$
$k\rightarrow\infty,$ и последовательность точек $\xi_k\in
K_{m_k}\cap D_{m_k},$ $k=1,2,\ldots,\,.$ Тогда, с одной стороны,
$\xi_k \rightarrow P_0$ при $k\rightarrow\infty.$

\medskip
Рассмотрим следующую процедуру. Заметим, что возможны два случая:
либо все элементы $\xi_k$ при $k=1,2,\ldots$ принадлежат множеству
$D\setminus G_1,$ либо найдётся номер $k_1$ такой, что $\xi_{k_1}\in
G_1.$ Далее, рассмотрим последовательность $\xi_k,$ $k>k_1.$
Заметим, что возможны два случая: либо $\xi_k$ при $k>k_1$
принадлежат множеству $D\setminus G_2,$ либо найдётся номер
$k_2>k_1$ такой, что $\xi_{k_2}\in G_2.$ И так далее. Предположим,
элемент $\xi_{k_{l-1}}\in G_{l-1}$ построен. Заметим, что возможны
два случая: либо $\xi_k$ при $k>k_{l-1}$ принадлежат множеству
$D\setminus G_l,$ либо найдётся номер $k_l>k_{l-1}$ такой, что
$\xi_{k_l}\in G_l.$ И так далее. Эта процедура может быть как
конечной (оборваться на каком-то $l\in {\Bbb N}$), так и
бесконечной, в зависимости от чего мы имеем две ситуации:

1) либо найдутся номера $n_0\in {\Bbb N}$ и $l_0\in {\Bbb N}$ такие,
что $\xi_k\in D\setminus G_{n_0}$ при всех $k>l_0;$

2) либо для каждого $l\in {\Bbb N}$ найдётся элемент $\xi_{k_l}$
такой, что $\xi_{k_l}\in G_l,$ причём последовательность $k_l$
является возрастающей по $l\in {\Bbb N}.$

\medskip
Рассмотрим каждый из этих случаев и покажем, что в обоих из них мы
приходим к противоречию. Пусть имеет место ситуация 1), тогда
заметим, что все элементы последовательности $\xi_k$ принадлежат
$K_{n_0},$ откуда вытекает существование подпоследовательности
$\xi_{k_r},$ $r=1,2,\ldots,$ сходящейся при $r\rightarrow\infty$ к
некоторой точке $\xi_0\in D.$ Однако, с другой стороны $\xi_k\in
D_{m_k}$ и, значит, $\xi_0\in \bigcap\limits_{m=1}^{\infty}
\overline{D_m}\subset
\partial D$ (см. предложение~\ref{thabc1} по этому поводу).
Полученное противоречие говорит о том, что случай 1) невозможен.
Пусть имеет место случай 2), тогда одновременно $\xi_k\rightarrow
P_0$ и $\xi_k\rightarrow P_1$ при $k\rightarrow\infty.$ В силу
непрерывного продолжения $f$ на $\overline{D}_P$ отсюда имеем, что
$f(\xi_k)\rightarrow f(P_0)$ и $f(\xi_k)\rightarrow f(P_1)$ при
$k\rightarrow\infty,$ откуда $f(P_0)=f(P_1),$ что противоречит
условию $C(f, P_1)\cap C(f, P_2)=\varnothing,$ $P_1\ne P_2.$
Полученное противоречие указывает на справедливость соотношения
(\ref{eq4B}).

Положим теперь $\widetilde{\varepsilon_0}=\min\{\varepsilon_0,
r_{m_1+1}\},$ и пусть $M_0$ -- натуральное число, такое что
$r_m<\widetilde{\varepsilon_0}$ при всех $m\geqslant M_0.$ Заметим,
что ввиду соотношения (\ref{eq4B}) и по определению разрезов
$\sigma_m\subset r_m,$ $\Gamma\left(C_m, K_m, D\right)>\Gamma(S(x_0,
r_m), S(x_0, \widetilde{\varepsilon_0}), D)$ и, значит,
$M_{\alpha}(f(\Gamma\left(C_m, K_m, D\right)))\leqslant
M_{\alpha}(f(\Gamma(S(x_0, r_m), S(x_0, \widetilde{\varepsilon_0}),
D))$ (см. \cite[теорема~6.4]{Va}). Заметим, что функция
$$\eta_{m}(t)= \left\{
\begin{array}{rr}
\psi(t)/I(r_m, \widetilde{\varepsilon_0}), &   t\in (r_m, \widetilde{\varepsilon_0})\\
0,  &  t\not\in (r_m, \widetilde{\varepsilon_0})
\end{array}\,,
\right.$$
%
%\end{equation}
%
$I(a, b)=\int\limits_a^b\psi(t)\,dt,$ удовлетворяет соотношению вида
(\ref{eq*3!!}) при $r_1:=r_m$ и $r_2:=\widetilde{\varepsilon_0},$
поэтому из последнего соотношения, а также условия (\ref{eq5***}) и
по определению кольцевого $Q$-отображения относительно
$\alpha$-модуля в точке $x_0,$ мы приходим к соотношению
\begin{equation}\label{eq37***}
M_{\alpha}\left(\Gamma\left(f(C_m), f(K_m),
D^{\,\prime}\right)\right)\leqslant K\cdot
I^{p^{\,\prime}-\alpha}(r_m, \varepsilon_0)\cdot\Delta(r_m,
\widetilde{\varepsilon_0}, \varepsilon_0)\,,\quad m\geqslant M_0\,,
\end{equation}
где $\Delta$ определяется из соотношения (\ref{eq1.3}) при
$\sigma=r_m.$ Поскольку из (\ref{eq37***}) вытекает, что
$M_{\alpha}\left(\Gamma\left(f(C_m), f(K_m),
D^{\,\prime}\right)\right)\rightarrow 0$ при $m\rightarrow\infty,$ а
$d_*(f(K_m))\geqslant \delta$ ввиду (\ref{eq3C}), то из предложения
\ref{pr2} вытекает существование некоторого $M_1\geqslant M_0$
такого, что при всех $m\geqslant M_1$
\begin{equation}\label{eq12B}
d_*f(C_m)\leqslant C\cdot R^{1+\alpha-n}\cdot
M_{\alpha}\left(\Gamma\left(f(C_m), f(K_m),
D^{\,\prime}\right)\right)\,,
\end{equation}
где $R$ -- радиус шара, содержащего область $D^{\,\prime},$ а $C>0$
-- постоянная из предложения \ref{pr2}.
Тогда из (\ref{eq37***}) и (\ref{eq12B}) вытекает, что
$d_*(f(C_m))\leqslant C\cdot R^{1+\alpha-n}\cdot K\cdot
I^{p^{\,\prime}-\alpha}(r_m, \varepsilon_0)\cdot\Delta(r_m,
\widetilde{\varepsilon_0}, \varepsilon_0),$ $m\geqslant M_1.$
Лемма доказана.~$\Box$
\end{proof}

\medskip
Для заданных областей $D\subset {\Bbb M}^n,$ $D^{\,\prime} \subset
{\Bbb M}_*^n,$ $n\geqslant 2,$ $n-1<\alpha\leqslant n,$ измеримой по
Лебегу функции $Q(x):{\Bbb M}^n\rightarrow [0, \infty],$ $Q(x)\equiv
0$ на ${\Bbb M}^n\setminus D,$ $b_0\in D,$ $b_0^{\,\prime}\in
D^{\,\prime},$ обозначим через $\frak{G}_{\alpha, b_0,
b_0^{\,\prime}, Q}\left(D, D^{\,\prime}\right)$ семейство всех
кольцевых $Q$-гомеоморфизмов $f:D\rightarrow D^{\,\prime}$
относительно $\alpha$-модуля в $\overline{D},$ таких что
$f(D)=D^{\,\prime},$ $b_0^{\,\prime}=f(b_0).$ В наиболее общей
ситуации основное утверждение настоящего раздела может быть
сформулировано следующим образом.

\medskip
\begin{lemma}\label{lem3A}
{\sl\, Пусть выполнены все условия леммы \ref{lem3B}, кроме того,
предположим, что многообразие ${\Bbb M}_*^n$ связно, и что найдётся
хотя бы один фиксированный невырожденный континуум $K\subset {\Bbb
M}_*^n\setminus D^{\,\prime}.$ Тогда каждое $f\in\frak{G}_{\alpha,
b_0, b_0^{\,\prime}, Q}\left(D, D^{\,\prime}\right)$ продолжается до
гомеоморфизма $f:\overline{D}_P\rightarrow
\overline{D^{\,\prime}}_P,$ при этом семейство таким образом
продолженных отображений является равностепенно непрерывным в
$\overline{D}_P.$ }
\end{lemma}

\begin{proof} Каждое отображение $f$ имеет непрерывное продолжение на
$\overline{D}_P$ в силу леммы \ref{lem1A}. Равностепенная
непрерывность семейства $\frak{G}_{\alpha, b_0, b_0^{\,\prime},
Q}\left(D, D^{\,\prime}\right)$ во внутренних точках области $D$
следует при $\alpha=n$ из \cite[лемма~2.4]{IS} (см. случай
$\alpha\ne n$ в \cite[лемма~2]{SevSkv}).

Осталось показать равностепенную непрерывность семейства
продолженных по непрерывности гомеоморфизмов $\frak{G}_{\alpha, b_0,
b_0^{\,\prime}, Q}\left(\overline{D}_P,
\overline{D^{\,\prime}}_P\right)$ на $E_D.$

Предположим противное, а именно, что семейство отображений
$\frak{G}_{\alpha, b_0, b_0^{\,\prime}, Q}\left(\overline{D}_P,
\overline{D^{\,\prime}}_P\right)$ не является равностепенно
непрерывным в некоторой точке $P_0\in E_D.$ Тогда найдутся число
$a>0,$ последовательность $P_k\in \overline{D}_P,$ $k=1,2,\ldots$ и
элементы $f_k\in\frak{G}_{b_0, b_0^{\,\prime},
Q}\left(\overline{D}_P, \overline{D^{\,\prime}}_P\right)$ такие, что
$d(P_k, P_0)<1/k$ и
\begin{equation}\label{eq6B}
d_*(f_k(P_k), f_k(P_0))\geqslant a\quad\forall\quad k=1,2,\ldots,\,.
\end{equation}
Ввиду возможности непрерывного продолжения каждого $f_k$ на границу
$D$ в терминах простых концов, для всякого $k\in {\Bbb N}$ найдётся
элемент $x_k\in D$ такой, что $d(x_k, P_k)<1/k$ и $d_*(f_k(x_k),
f_k(P_k))<1/k.$ Тогда из (\ref{eq6B}) вытекает, что
\begin{equation}\label{eq7D}
d_*(f_k(x_k), f_k(P_0))\geqslant a/2\quad\forall\quad
k=1,2,\ldots,\,.
\end{equation}
Аналогично, в силу непрерывного продолжения отображения $f_k$ в
$\overline{D}_P$ найдётся последовательность $x_k^{\,\prime}\in D,$
$x_k^{\,\prime}\rightarrow P_0$ при $k\rightarrow \infty$ такая, что
$d_*(f_k(x_k^{\,\prime}), f_k(P_0))<1/k$ при $k=1,2,\ldots\,.$ Тогда
из (\ref{eq7D}) вытекает, что
\begin{equation}\label{eq8BC}
d_*(f_k(x_k), f_k(x_k^{\,\prime}))\geqslant a/4\quad\forall\quad
k=1,2,\ldots\,,\,.
\end{equation}
Пусть $\sigma_m,$ $m=1,2,\ldots,$ -- соответствующая $P_0$ цепь
разрезов, лежащих на сферах с центром в некоторой точке $x_0\in
\partial D$ и радиусов $r_m\rightarrow 0,$ $m\rightarrow\infty.$
Пусть $D_m$ -- соответствующая $P_0$ последовательность
ассоциированных областей. Не ограничивая общности рассуждений, можно
считать, что $x_k$ и $x_k^{\,\prime}$ принадлежат области $D_k.$
Соединим точки $x_k$ и $x_k^{\,\prime}$ кривой $C_k$ лежащей в
$D_k.$ Тогда по лемме \ref{lem3B} мы получим, что
$d_*(f(C_k))\rightarrow 0$ при $k\rightarrow \infty,$ что
противоречит неравенству (\ref{eq8BC}). Полученное противоречие
указывает на то, что исходное предположение об отсутствии
равностепенной непрерывности семейства $\frak{G}_{\alpha, b_0,
b_0^{\,\prime}, Q}\left(\overline{D}_P,
\overline{D^{\,\prime}}_P\right)$ было неверным.~$\Box$
\end{proof}

\medskip
Из леммы \ref{lem3A}, аналогично \cite[доказательство теорем~1.1 и
2.1]{IS$_1$}, получаем следующее утверждение.

\medskip
\begin{theorem}\label{th8}{\sl\,
Пусть $\alpha\in (n-1, n],$ области $D\subset {\Bbb M}^n$ и
$D^{\,\prime}\subset {\Bbb M}_*^n$ имеют компактные замыкания, кроме
того, область $D$ регулярна и содержит не менее двух различных
простых концов, $D^{\,\prime}$ имеет локально квазиконформную
границу и, одновременно, является пространством $n$-регулярным по
Альфорсу относительно геодезической метрики $d_*$ и меры объёма
$v_*$ в ${\Bbb M}_*^n,$ в котором выполнено $(1;
\alpha)$-неравенство Пуанкаре. Потребуем, кроме того, чтобы $C(f,
P_1)\cap C(f, P_2)=\varnothing$ для произвольных различных простых
концов $P_1, P_2\in E_D,$ чтобы многообразие ${\Bbb M}_*^n$ было
связно, и чтобы нашёлся хотя бы один фиксированный невырожденный
континуум $K\subset {\Bbb M}_*^n\setminus D^{\,\prime}.$

Предположим, что выполнено одно из следующих условий:

1) либо в каждой точке $x_0\in \overline{D}$ при некотором
$\varepsilon_0=\varepsilon_0(x_0)>0$ и всех
$0<\varepsilon<\varepsilon_0$ выполнены условия типа (\ref{eq10A});
2) либо $Q\in FMO(\overline{D}).$
Тогда каждое $f\in\frak{G}_{\alpha, b_0, b_0^{\,\prime}, Q}\left(D,
D^{\,\prime}\right)$ продолжается до гомеоморфизма
$f:\overline{D}_P\rightarrow \overline{D^{\,\prime}}_P,$ при этом
семейство таким образом продолженных отображений является
равностепенно непрерывным в $\overline{D}_P.$}
\end{theorem}

\medskip\medskip
\begin{remark}\label{rem3}
Отметим, что условие $C(f, P_1)\cap C(f, P_2)=\varnothing$ для
произвольных различных простых концов $P_1, P_2\in E_D,$
присутствующее в формулировках лемм \ref{lem3B}, \ref{lem3A} и
теоремы \ref{th8}, автоматически выполнено при $p=n$ ввиду леммы
\ref{l:9.1}.~$\Box$
\end{remark}

\medskip
Доказательство следующей леммы может быть найдено, напр., в
\cite[теорема~4.3]{IS$_1$}.

\medskip
\begin{lemma}{}\label{thOS4.1A} Пусть $D$~--- область в ${\Bbb M}^n,$
$n\geqslant 3,$ $\varphi\colon(0,\infty)\to (0,\infty)$~---
неубывающая функция{\em,} удовлетворяющая условию~\eqref{eqOS3.0a}.
Если $p>n-1,$ то каждое открытое дискретное отображение $f\colon
D\to {\Bbb M}_*^n$ с конечным искажением класса
$W^{1,\varphi}_{loc},$ такое{\em,} что $N(f, D)<\infty,$ является
нижним $Q$-отображением относительно $p$-модуля в каждой точке
$x_0\in\overline{D}$ при
 $$
Q(x)=N(f, D) K^{\frac{p-n+1}{n-1}}_{I, \alpha}(x, f),\quad
\alpha:=\frac{p}{p-n+1}\,,
 $$
где внутренняя дилатация $K_{I,\alpha}(x, f)$ отображения $f$ в
точке $x$ порядка $\alpha$ определена
соотношением~\eqref{eq0.1.1A}{\em,} а кратность $N(f, D)$ определена
вторым соотношением в~\eqref{eq1.7A}.
 \end{lemma}

\medskip
Для областей $D\subset {\Bbb M}^n,$ $D^{\,\prime}\subset {\Bbb
M}_*^n,$ $b_0\in D,$ $b_0^{\,\prime}\in D^{\,\prime}$ и произвольной
измеримой по Лебегу функции $Q(x): {\Bbb M}^n\rightarrow [0,
\infty],$ $Q(x)\equiv 0$ при $x\not\in D,$ обозначим символом
$\frak{F}_{\alpha, b_0, b_0^{\,\prime}, \varphi, Q}(D,
D^{\,\prime})$ семейство всех гомеоморфизмов $f:D\rightarrow
D^{\,\prime}$ класса $W_{loc}^{1, \varphi}$ в $D,$
$f(D)=D^{\,\prime},$ таких что $K_{I, \alpha}(x, f)\leqslant Q(x)$
при почти всех $x\in D$ и $f(b_0)=b_0^{\,\prime}.$ Справедливо
следующее утверждение.

\medskip
\begin{theorem}\label{th7A}{\sl\,
Пусть $\alpha\in (n-1, n],$ области $D\subset {\Bbb M}^n$ и
$D^{\,\prime}\subset {\Bbb M}_*^n$ имеют компактные замыкания, кроме
того, область $D$ регулярна и содержит не менее двух различных
простых концов, $D^{\,\prime}$ имеет локально квазиконформную
границу и, одновременно, является пространством $n$-регулярным по
Альфорсу относительно геодезической метрики $d_*$ и меры объёма
$v_*$ в ${\Bbb M}_*^n,$ в котором выполнено $(1;
\alpha)$-неравенство Пуанкаре. Потребуем, кроме того, чтобы $C(f,
P_1)\cap C(f, P_2)=\varnothing$ для произвольных различных простых
концов $P_1, P_2\in E_D,$ чтобы многообразие ${\Bbb M}_*^n$ было
связно, и чтобы нашёлся хотя бы один фиксированный невырожденный
континуум $K\subset {\Bbb M}_*^n\setminus D^{\,\prime}.$
Предположим, $Q\in L_{loc}^1({\Bbb M}^n),$ заданная неубывающая
функция $\varphi:[0,\infty)\rightarrow[0,\infty)$ удовлетворяет
условию
\begin{equation}\label{eqOS3.0aa}
\int\limits_{1}^{\infty}\left(\frac{t}{\varphi(t)}\right)^
{\frac{1}{n-2}}dt<\infty\,,
\end{equation}
и что для каждого $x_0\in \overline{D}$ выполнено одно из следующих
условий:

1) либо $Q\in FMO(\overline{D});$

2) либо в каждой точке $x_0\in \overline{D}$ при некотором
$\varepsilon_0=\varepsilon_0(x_0)>0$ и всех
$0<\varepsilon<\varepsilon_0$
\begin{equation}\label{eq1H}
\int\limits_{\varepsilon}^{\varepsilon_0}
\frac{dt}{t^{\frac{n-1}{\alpha-1}}q_{x_0}^{\,\frac{1}{\alpha-1}}(t)}<\infty\,,\qquad
\int\limits_{0}^{\varepsilon_0}
\frac{dt}{t^{\frac{n-1}{\alpha-1}}q_{x_0}^{\,\frac{1}{\alpha-1}}(t)}=\infty\,,
\end{equation}
где
$q_{x_0}(r):=\frac{1}{r^{n-1}}\int\limits_{|x-x_0|=r}Q(x)\,d{\mathcal
H}^{n-1}.$
Тогда каждый элемент $f\in \frak{F}_{\alpha, b_0, b_0^{\,\prime},
\varphi, Q}(D, D^{\,\prime})$ продолжается до непрерывного
отображения $\overline f\colon\overline D_P\rightarrow\overline
{D^{\,\prime}}_P$, при этом, семейство отображений $\frak{F}_{b_0,
b_0^{\,\prime}, \varphi, Q}(\overline{D}_P,
\overline{D^{\,\prime}}_P),$ состоящее из всех продолженных таким
образом отображений, является равностепенно непрерывным, а значит, и
нормальным  в $\overline{D}_P$.}
\end{theorem}

\medskip
\begin{proof}
Пусть $p$ -- число, определяющее из условия
$\alpha=\frac{p}{p-n+1},$ т.е., $p:=\alpha(n-1)/(\alpha-1))$. Тогда
$p>n-1$ и по лемме \ref{thOS4.1A} каждое отображение
$\frak{F}_{\alpha, b_0, b_0^{\,\prime}, \varphi, Q}(D,
D^{\,\prime})$ является нижним $B$-отображением относительно
$p$-модуля при $B(x)=Q^{\frac{p-n+1}{n-1}}(x, f)$ в $\overline{D}.$
По теореме \ref{th4AB} отображение $f$ является кольцевым
$B^{\frac{n-1}{p-n+1}}(x)$-ото\-бра\-же\-нием в $\overline{D}$
относительно $\alpha$-модуля. Другими словами, поскольку
$B^{\frac{n-1}{p-n+1}}(x)=Q(x),$ то $f$ является кольцевым
$Q(x)$-отображением в $\overline{D}$ относительно $\alpha$-модуля,
т.е., $f\in\frak{G}_{\alpha, b_0, b_0^{\,\prime}, Q}\left(D,
D^{\,\prime}\right).$ Так как $Q(x)$ удовлетворяет условиям 1) и 2)
теоремы \ref{th8}, то желанное утверждение непосредственно вытекает
из этой теоремы.~$\Box$
\end{proof}

\medskip
\begin{remark}\label{rem4}
Если в условиях теорем \ref{th8} и \ref{th7A} потребовать условие
локальной связности области $D$ на границе вместо её регулярности,
то можно показать, что в этом случае мы имеем дело с поточечным
граничным соответствием $\overline
f\colon\overline{D}\rightarrow\overline {D^{\,\prime}},$ при этом,
имеет место равностепенная непрерывность соответствующих семейств,
понимаемую как р.н. между метрическими пространствами
$(\overline{D}, d)$ и $(\overline{D^{\prime}}, d_*).$
\end{remark}

КОНТАКТНАЯ ИНФОРМАЦИЯ

\medskip
\noindent{{\bf Денис Петрович Ильютко} \\
МГУ имени М.\,В.\,Ломоносова \\
кафедра дифференциальной геометрии и приложений, мехмат факультет,\\
Ленинские горы, ГЗ МГУ, ГСП-1, г.~Москва, Россия, 119991\\
тел. +7 495 939 39 40, e-mail: ilyutko@yandex.ru}

\medskip
\noindent{{\bf Евгений Александрович Севостьянов} \\
Житомирский государственный университет им.\ И.~Франко\\
кафедра математического анализа, ул. Большая Бердичевская, 40 \\
г.~Житомир, Украина, 10 008 \\ тел. +38 066 959 50 34 (моб.),
e-mail: esevostyanov2009@mail.ru}

\end{document}